\documentclass{article}
\usepackage{mathrsfs}
\usepackage[utf8]{inputenc}
\usepackage[letterpaper,margin=1.0in]{geometry}

\usepackage{amsmath}\allowdisplaybreaks
\usepackage{amsfonts,bm}
\usepackage{amssymb}









\def\eqref#1{equation~(\ref{#1})}









\def\1{\bf{1}}

\newcommand{\Norm}[1]{\left\| #1 \right\|}
\newcommand{\norm}[1]{\left\| #1 \right\|_2}
\def\inner#1#2{\langle #1, #2 \rangle}

\def\vzero{{\bf{0}}}
\def\vone{{\bf{1}}}

\def\va{{\bf{a}}}
\def\vb{{\bf{b}}}

\def\ve{{\bf{e}}}

\def\vg{{\bf{g}}}

\def\vs{{\bf{s}}}

\def\vu{{\bf{u}}}
\def\vv{{\bf{v}}}

\def\vx{{\bf{x}}}
\def\vy{{\bf{y}}}


\def\fC{{\mathcal{C}}}

\def\fE{{\mathcal{E}}}

\def\fG{{\mathcal{G}}}

\def\fI{{\mathcal{I}}}

\def\fM{{\mathcal{M}}}

\def\fO{{\mathcal{O}}}

\def\fV{{\mathcal{V}}}


\def\BE{{\mathbb{E}}}

\def\BI{{\mathbb{I}}}

\def\BN{{\mathbb{N}}}

\def\BR{{\mathbb{R}}}

\def\mA {{\bf A}}

\def\mF {{\bf F}}
\def\mG {{\bf G}}

\def\mI {{\bf I}}

\def\mR {{\bf R}}
\def\mS {{\bf S}}

\def\mU {{\bf U}}

\def\mW {{\bf W}}
\def\mX {{\bf X}}
\def\mY {{\bf Y}}






\newcommand{\Var}{\mathrm{Var}}



\usepackage{amsthm}
\usepackage{etoolbox}

\makeatletter
\def\Ddots{\mathinner{\mkern1mu\raise\p@
\vbox{\kern7\p@\hbox{.}}\mkern2mu
\raise4\p@\hbox{.}\mkern2mu\raise7\p@\hbox{.}\mkern1mu}}
\makeatother

\makeatletter
\newcommand*{\rom}[1]{\expandafter\@slowromancap\romannumeral #1@}
\makeatother

\def\FM {{{\rm AccGossip}}}
\def\bg {{{\bar \vg}}}
\def\bs {{{\bar \vs}}}

\def\bx {{{\bar \vx}}}

\def\PL {{Polyak--{\L}ojasiewicz}}

\def\server{\underline{server}:}
\def\client{\underline{$i$-th client}:}

\theoremstyle{plain}
\newtheorem{theorem}{Theorem}[section]

\newtheorem{lemma}[theorem]{Lemma}
\newtheorem{corollary}[theorem]{Corollary}
\theoremstyle{definition}
\newtheorem{definition}[theorem]{Definition}
\newtheorem{assumption}[theorem]{Assumption}
\theoremstyle{remark}
\newtheorem{remark}[theorem]{Remark}

\usepackage{color}
\usepackage{enumitem}
\usepackage{algorithm}
\usepackage{algorithmic}
\usepackage[numbers,sort]{natbib}
\usepackage{hyperref}
\usepackage{graphicx}
\usepackage{subfigure}
\setlength\parindent{0pt}
\setlength\parskip{4pt}
\usepackage{booktabs}

\title{On the Complexity of Finite-Sum Smooth Optimization \\ under the Polyak--{\L}ojasiewicz Condition}
\date{}
\author{Yunyan Bai \qquad\qquad Yuxing Liu \qquad\qquad Luo Luo \\[0.3cm] School of Data Science, Fudan University}

\begin{document}

\maketitle

\begin{abstract}
This paper considers the optimization problem of the form $\min_{{\bf x}\in{\mathbb R}^d} f({\bf x})\triangleq \frac{1}{n}\sum_{i=1}^n f_i({\bf x})$, 
where~$f(\cdot)$ satisfies the Polyak--{\L}ojasiewicz (PL) condition with parameter $\mu$ and $\{f_i(\cdot)\}_{i=1}^n$ is $L$-mean-squared smooth.
We show that any gradient method requires at least $\Omega(n+\kappa\sqrt{n}\log(1/\epsilon))$ incremental first-order oracle (IFO) calls to find an $\epsilon$-suboptimal solution, where $\kappa\triangleq L/\mu$ is the condition number of the problem.
This result nearly matches upper bounds of IFO complexity for best-known first-order methods.
We also study the problem of minimizing the PL function in the distributed setting such that the individuals $f_1(\cdot),\dots,f_n(\cdot)$ are located on a connected network of $n$ agents.
We provide lower bounds of
$\Omega(\kappa/\sqrt{\gamma}\,\log(1/\epsilon))$, $\Omega((\kappa+\tau\kappa/\sqrt{\gamma}\,)\log(1/\epsilon))$ and $\Omega\big(n+\kappa\sqrt{n}\log(1/\epsilon)\big)$ for communication rounds, time cost and local first-order oracle calls respectively, where $\gamma\in(0,1]$ is the spectral gap of the mixing matrix associated with the network and~$\tau>0$ is the time cost of per communication round. 
Furthermore, we propose a decentralized first-order method that nearly matches above lower bounds in expectation. 
\end{abstract}

\section{Introduction}

We study the optimization problem of the form
\begin{align}\label{main:prob}
\min_{\vx\in\BR^d} f(\vx)\triangleq \frac{1}{n}\sum_{i=1}^n f_i(\vx),
\end{align}
where $\{f_i(\cdot)\}_{i=1}^n$ is $L$-mean-squared smooth but each $f_i(\cdot)$ is possibly nonconvex.
The complexity of finding stationary points in Problem~(\ref{main:prob}) has been widely studied in recent years~\cite{allen2016variance,reddi2016stochastic,zhou2019lower,fang2018spider,li2021page}.
However, finding the global solution is intractable for the general nonconvex smooth optimization~\cite{nemirovskij1983problem,roux2012stochastic}.
This paper focuses on the minimization Problem (\ref{main:prob}) under the \PL~condition (PL) \cite{polyak1963gradient,lojasiewicz1963topological}, i.e., the objective function $f:\BR^d\to\BR$ satisfies
\begin{align*}
    f(\vx) - \inf_{\vy\in\BR^d} f(\vy) \leq \frac{1}{2\mu}\Norm{\nabla f(\vx)}^2
\end{align*}
for any $\vx\in\BR^d$, where $\mu>0$ is a constant.
This inequality suggests the function value gap $ f(\vx) - f^*$ is dominated by the square of gradient norm, which leads to the gradient descent (GD) method linearly converge to the global minimum without the convexity~\cite{karimi2016linear}.
The PL condition covers a lot of popular applications, such as deep neural networks~\cite{liu2022loss,allen2019convergence,zeng2018global}, reinforcement learning~\cite{fazel2018global,agarwal2021theory,mei2020global,yuan2022general}, optimal control~\cite{bu2019lqr,fatkhullin2021optimizing} and matrix recovery~\cite{hardt2016identity,li2018algorithmic,bi2022local}.

We typically solve the finite-sum optimization Problem (\ref{main:prob}) by incremental first-order oracle (IFO) methods \cite{agarwal2015lower}, which can access the pair $\{f_i(\vx), \nabla f_i(\vx)\}$ for given $\vx\in\BR^d$ and $i\in[n]$.
This class of methods can leverage the structure of the objective \cite{johnson2013accelerating,defazio2014saga,schmidt2017minimizing,zhang2013linear} to iterate with one or mini-batch individual gradient, which is more efficient than the iteration with the full-batch gradient. 
IFO methods have received a lot of attention in recent years.
For example, the stochastic variance reduced gradient (SVRG) methods with negative momentum \cite{qian2021svrg,allen2017katyusha,kovalev2020don,woodworth2016tight,agarwal2015lower} achieve the (near) optimal IFO complexity for convex optimization;
the stochastic recursive gradient methods \cite{nguyen2017sarah,fang2018spider,li2021page,pham2020proxsarah,wang2019spiderboost,zhou2019lower,carmon2020lower}
achieve the optimal IFO complexity for finding approximate stationary points in general nonconvex optimization.
For the PL condition, \citet{reddi2016stochastic,lei2017non} proposed SVRG-type methods that find the $\epsilon$-suboptimal solution within at most~$\fO((n+n^{2/3}\kappa)\log(1/\epsilon))$ IFO calls, where~$\kappa\triangleq L/\mu$ is the condition number.
Later, \citet{zhou2019faster,wang2019spiderboost,li2021page} improved the upper bound to $\fO((n+\kappa\sqrt{n})\log(1/\epsilon))$ by stochastic recursive gradient estimator.
Recently, \citet{yue2023lower} established a tight lower complexity bound of the full-batch gradient methods for minimizing the PL function.
However, the optimality of existing IFO methods for the finite-sum setting is still an open problem.

For large-scale optimization problems, we are interested in designing the distributed algorithms.
Specifically, we allocate individuals $f_1(\cdot),\dots,f_n(\cdot)$ on~$n$ different agents and desire the agents solve the problem collaboratively.
We focus on the decentralized setting that agents are linked by a connected network, so that each agent can only access its own local first-order oracle (LFO) and exchange messages with its neighbours.
Besides the LFO complexity, we also require considering the communication complexity and the time complexity.
It is worth noting that the time complexity in distributed optimization does not directly corresponds to the weighted sum of the LFO complexity and the communication complexity, since some agents may skip the computation of local gradient during the iterations \cite{maranjyan2022gradskip,mishchenko2022proxskip}.
Most of work for decentralized optimization focus on the convex case \cite{shi2015extra,nedic2009distributed,qu2017harnessing,scaman2017optimal,kovalev2020optimal,song2023optimal,ye2023multi,hendrikx2021optimal,li2022variance} or the general nonconvex case~\cite{luo2022optimal,li2022destress,xin2022fast,sun2020improving,lu2021optimal,zhan2022efficient}.
Recently, \citet{yuan2022revisiting} studied the tightness of complexity for decentralized optimization under the PL condition in online setting, but the optimality of their result does not include the dependence on condition number.

In this paper, we provide the nearly tight lower bounds for the finite-sum optimization problem under the PL condition.
We summarize our contributions as follows:
\begin{itemize}[topsep=3pt,itemsep=2pt,partopsep=2pt, parsep=2pt]
\item We provide the lower bound of $\Omega(n+\kappa\sqrt{n}\log(1/\epsilon))$ for IFO complexity, which nearly match the upper complexity bound of $\fO((n+\kappa\sqrt{n})\log(1/\epsilon))$ \cite{zhou2019faster,wang2019spiderboost,li2021page}.
\item We provide the lower bounds of $\Omega(\kappa/\sqrt{\gamma}\,\log(1/\epsilon))$, $\Omega((\kappa+\tau\kappa/\sqrt{\gamma}\,)\log(1/\epsilon))$ and $\Omega\big(n+\kappa\sqrt{n}\log(1/\epsilon)\big)$ for communication complexity, time complexity and LFO complexity for decentralized setting, where $\gamma$ is the spectral gap of the mixing matrix for the network and~$\tau$ is the time cost of per communication round. 
\item We propose a decentralized first-order algorithm within  communication complexity of $\tilde \fO(\kappa/\sqrt{\gamma}\,\log(1/\epsilon))$, time complexity of $\tilde \fO((\kappa+\tau\kappa/\sqrt{\gamma}\,)\log(1/\epsilon))$ and the LFO complexity of $\fO\big((n+\kappa\sqrt{n}\,)\log(1/\epsilon)\big)$ in expectation, nearly matching the lower bounds.
\end{itemize}
We compare our results with related work in Table~\ref{table:single}-\ref{table:distributed}.

\section{Preliminaries}

In this section, we formalize the problem setting and the complexity of the finite-sum optimization.

\subsection{Notation and Assumptions}

Given vector $\vx\in\BR^d$, we denote $x_i$ as the $i$-th entry of~$\vx$ for~$i\in[d]$ and denote ${\rm supp}(\vx)$ as the index set for nonzero entries of $\vx$. 
Given matrix $\mA\in\BR^{m\times n}$, we denote~$a_{i,j}$ as the $(i,j)$-th entry of $\mA$ for $i\in[m]$ and $j\in[n]$. 
We let $\vone$ be the vectors (or matrices) of all ones and $\vzero$ be the vector (or matrix) of all zeros.
Additionally, we let $\mI$ be the identity matrix and denote its $i$-th column as $\ve_i$. 
We use $\Vert \cdot \Vert$ to present the Euclidean norm of a vector or the Frobenius norm of a matrix.

\begin{table*}[t]
\centering
\caption{We present the IFO complexity for finding an $\epsilon$-suboptimal solution of Problem (\ref{main:prob}) on single machine.}\label{table:single}
\vskip0.3cm 
\begin{tabular}{ccc}
\hline
 Algorithm & IFO  & Reference \\
\hline\hline\addlinespace
GD & $\fO\big(\kappa n\log(1/\epsilon)\big)$  & \citet{karimi2016linear} \\\addlinespace
SVRG / SCSG & $\fO\big((n+\kappa n^{2/3}\,)\log(1/\epsilon)\big)$ &  \citet{reddi2016stochastic,lei2017non}
\\\addlinespace
SPIDER / PAGE   & $\fO\big((n+\kappa\sqrt{n}\,)\log(1/\epsilon)\big)$ & \citet{zhou2019faster,wang2019spiderboost,li2021page}  \\ \addlinespace
\hline\addlinespace
Lower Bound & $\Omega\big(n+\kappa\sqrt{n}\log(1/\epsilon)\big)$ & Corollary \ref{cor:ifo}  \\ \addlinespace \hline
\end{tabular}  
\end{table*}

We consider the following assumptions for the finite-sum optimization Problem (\ref{main:prob}).
\begin{assumption}\label{asm:lower}
We suppose the objective function $f(\cdot)$ is lower bounded, i.e, we have
\begin{align*}
    f^* = \inf_{\vy\in\BR^d} f(\vy) > -\infty.
\end{align*} 
\end{assumption}
\begin{assumption}\label{asm:as}
We suppose the function set $\{f_i(\cdot)\}_{i=1}^n$ is $L$-mean-squared smooth for some $L>0$, i.e., we have
\begin{align*}
 \frac{1}{n}\sum_{i=1}^n\Norm{\nabla f_i(\vx)-\nabla f_i(\vy)}^2\leq L^2\Norm{\vx-\vy}^2,
\end{align*}
for any $\vx,\vy\in\BR^d$. 
\end{assumption}
\begin{assumption} \label{asm:SC}
We suppose objective function $f(\cdot)$ is $\mu$-PL for some $\mu>0$, i.e., 
we have
\begin{align*}
    f(\vx) -\inf_{\vy\in\BR^d} f(\vy) \leq \frac{1}{2\mu}\norm{\nabla f(\vx)}^2
\end{align*}
for any $\vx \in \BR^{d}$.
\end{assumption}
Based on above assumptions, we define condition number and $\epsilon$-suboptimal solution of our problem.
\begin{definition}
We define $\kappa\triangleq L/\mu$ as the condition number of problem~(\ref{main:prob}).
\end{definition}

\begin{definition}
We say $\hat \vx$ is an $\epsilon$-suboptimal solution of Problem (\ref{main:prob}) if it holds that $f(\hat \vx) - f^* \le \epsilon$.
\end{definition}

We use the notation $\tilde\fO(\cdot)$ to hide the logarithmic dependence on condition number $\kappa$ and individuals number $n$.

For the decentralized setting, the individual $f_i(\cdot)$  presents the local function on the $i$-th agent.
We define the aggregate variable and the corresponding aggregated gradient as
\begin{align*}
\begin{split}    
\mX=\begin{bmatrix}
\vx(1) \\ \vdots \\ \vx(n)
\end{bmatrix}\in\BR^{n\times d}
\qquad\text{and}\qquad
\nabla \mF(\mX)=\begin{bmatrix}
\nabla f_1(\vx(1)) \\ \vdots \\ \nabla f_n(\vx(n))
\end{bmatrix}
\in\BR^{n\times d}
\end{split}
\end{align*}
respectively, where $\vx(i)\in\BR^{1\times d}$ is the local variable on the $i$-th agent. 
For given $\mX\in\BR^{n\times d}$, we also introduce the mean vector $\bx = \frac{1}{n}\vone^\top\mX = \frac{1}{n}\sum_{i=1}^n \vx(i)\in\BR^{1\times d}$.
For the ease of presentation, we let the input of a function can also be organized as a row vector, such as $f(\bar\vx)$ and $f_i(\vx(i))$.

We describe one communication round by multiplying the mixing matrix $\mW\in\BR^{n\times n}$ on the aggregated variable.
We give the following assumption for matrix $\mW$.

\begin{assumption}\label{asm:W}
We suppose mixing matrix $\mW\in\BR^{n\times n}$ has the following properties:
(a) We have $\mW=\mW^\top$, $\mW\vone_n = \vone_n$ and $\vzero \preceq \mW \preceq \mI$;
(b) The entry of $\mW$ holds that~$w_{i,j}>0$ if and only if the $i$-th agent and the $j$-th agent are connected or $i=j$, otherwise it holds that $w_{ij}=0$.
(c) The spectral gap of $\mW$ is lower bounded by $\gamma\in(0,1]$, i.e., it holds that $1-\lambda_2(\mW)\geq\gamma$ for some $\gamma\in(0,1]$, where $\lambda_2(\mW)$ is the second-largest eigenvalue of $\mW$.
\end{assumption}

\subsection{The Finite-Sum Optimization}

The complexity of first-order methods for solving the finite-sum optimization Problem (\ref{main:prob}) on single machine mainly depends on the number of access to the incremental first-order oracle (IFO), which is defined as follows \cite{agarwal2015lower,woodworth2016tight}.

\begin{definition}
The incremental first-order oracle (IFO) takes the input $i\in[n]$ and $\vx$, and returns the pair $(f_i(\vx),\nabla f_i(\vx))$.
\end{definition}

Then we formally define the IFO algorithm.

\begin{definition}\label{def:linear-span}
An IFO algorithm for given initial point~$\vx^0$ is defined as a measurable mapping from functions $\{f_i(\cdot)\}_{i=1}^n$ to an infinite sequence of point and index
pairs $\{(\vx^t, i_t)\}_{t=0}^\infty$ with random variable $i_t\in[n]$, which satisfies
\begin{align*}
\vx^t \in \mathrm{Lin}\big(\{\vx^0, \ldots, \vx^{t-1}, 
\nabla f_{i_0}(\vx^0), \ldots, \nabla f_{i_{t-1}}(\vx^{t-1})\}\big),
\end{align*}
where $\mathrm{Lin}(\cdot)$ denotes the linear span and $i_t$ denotes the index of individual function chosen at the $t$-th step. 
\end{definition}

For the distributed optimization over a network of $n$ agents, the $i$-th agent can only perform the computation on its local function $f_i(\cdot)$ directly.
Hence, we describe the complexity of computational cost by the number of access to the local first-order oracle (LFO).
\begin{definition}
The local first-order oracle (IFO) takes the input $i\in[n]$ and $\vx$ , and returns the pair $(f_i(\vx), \nabla f_i(\vx))$.
\end{definition}

Recall that agents on network can only communicate with their neighbours, which means the agent in decentralized algorithms cannot arbitrarily establish the linear space of all local gradients. 
Additionally, one iteration of the algorithm allows a mini-batch of agents to compute their local gradient in parallel.
Therefore, besides the LFO complexity, we also need to separately consider the communication complexity and the time complexity.
This motivates the following definition for decentralized first-order oracle algorithm (DFO) \cite{scaman2017optimal}.

\begin{definition}\label{dfn:DFO}
A decentralized first-order oracle (DFO) algorithm over a network of $n$ agents satisfies the following constraints:
\begin{itemize}
\item \textbf{Local memory:} Each agent $i$ can store past values in a local memory $\mathcal{M}_{i,s}$ at time $s>0$. 
These values can be accessed and used at time $s$ by running the algorithm on agent $i$. Additionally, for all $i\in[n]$, we have 
\begin{align*}
\mathcal{M}_i^s\subset\mathcal{M}_{{\rm comp},i}^s\bigcup\mathcal{M}_{{\rm comm},i}^s,
\end{align*}
where $\mathcal{M}_{{\rm comp},i}^s$ and $\mathcal{M}_{{\rm comm},i}^s$ are the values come from the computation and communication respectively. 
\item \textbf{Local computation:} Each agent $i$ can access its local first-order oracle $\{f_i(\vx), \nabla f_i(\vx)\}$ for given $\vx\in\mathcal{M}_{i,s}$ at time $s$. That is, for all $i\in[n]$, we have
\begin{align*}
\mathcal{M}_{{\rm comp},i}^s={\rm Lin}\big(\{\vx,\nabla f_i(\vx):x\in\mathcal{M}_i^{s-1}
\}\big).
\end{align*}
\item \textbf{Local communication:} Each agent $i$ can share its value to all or part of its neighbors at time $s$. That is, for all $i\in[n]$, we have
\begin{align*}
\mathcal{M}_{{\rm comm},i}^{s}={\rm Lin}\Bigg(\bigcup_{j\in{\rm nbr}(i)}\mathcal{M}_j^{s-\tau}\Bigg),
\end{align*}
where ${\rm nbr}(i)$ is the set consists of the indices for the neighbours of agent $i$ and $\tau<s$.
\item \textbf{Output value:} Each agent $i$ can specify one vector in its memory as local output of the algorithm at time $s$. That is, for all $i\in[n]$, we have
$\vx_i^s\in\mathcal{M}_i^s$.     
\end{itemize}
\end{definition}

\begin{table*}[t]
\centering
\footnotesize
\caption{We present the complexity for finding an $\epsilon$-suboptimal solution of Problem (\ref{main:prob}) in decentralized setting.} \label{table:distributed}
\vskip0.3cm 
\renewcommand{\arraystretch}{1.2}
\begin{tabular}{ccccc}
\hline
Algorithm  & Communication  & Time  & LFO & Reference \\
\hline\hline\addlinespace
DGD-GT     & $ \tilde\fO\left({\kappa}/{\sqrt{\gamma}}\log(1/\epsilon)\right)$ & $\tilde\fO\left(\kappa\left(1+{\tau}/{\sqrt{\gamma}\,}\right)\log(1/\epsilon)\right)$ & $\fO\big(\kappa n\log(1/\epsilon)\big)$ & Theorem 
 \ref{thm:DGD-GT} \\\addlinespace
DRONE    & $ \tilde\fO\left({\kappa}/{\sqrt{\gamma}}\log(1/\epsilon)\right)$ & $\tilde\fO\left(\kappa\left(1+{\tau}/{\sqrt{\gamma}}\,\right)\log(1/\epsilon)\right)$ & $\fO\big((n+\kappa\sqrt{n}\,)\log(1/\epsilon)\big)$ & Corollary \ref{cor:decentralized-upper} \\\addlinespace 
\hline\addlinespace
Lower Bound    & $ \Omega\left({\kappa}/{\sqrt{\gamma}}\log(1/\epsilon)\right)$ & $\Omega\left(\kappa\left(1+{\tau}/{\sqrt{\gamma}}\,\right)\log(1/\epsilon)\right)$ & $\Omega\big(n+\kappa\sqrt{n}\log(1/\epsilon)\big)$ & Theorem 
 \ref{thm:decentralized-lower}, Corollary \ref{cor:decentralized-lfo} \\\addlinespace
\hline
\end{tabular} 
\end{table*}

\section{The Lower Bound on IFO Complexity}\label{sec:IFO}

This section provides the lower bound on IFO complexity to show the optimality (up to logarithmic factors) of existing first-order methods~\cite{zhou2019faster,wang2019spiderboost,li2021page}.
Without loss of generality, we always assume the IFO algorithm iterates with the initial point $\vx^{(0)}=\vzero$ in our analysis for lower bound. Otherwise, we can take the functions $\{f_i(\vx+\vx^{(0)})\}_{i=1}^n$ into consideration.

We first consider the case of $n=\fO(\kappa^2)$. 
We introduce the functions $\psi_\theta:\BR\to\BR$, $q_{T,t}:\BR^{Tt}\to\BR$ and $g_{T,t}:\BR^{Tt}\to\BR$ provided by \citet{yue2023lower}, that is
\begin{align*}   
& \psi_{\theta}(x)
= \begin{cases}
	\frac{1}{2}x^{2}, 
	& x\leq\frac{31}{32}\theta, \\[0.3em]
	\frac{1}{2}x^{2}-16(x-\frac{31}{32}\theta)^{2},& {\frac{31}{32}\theta<x\leq \theta,}\\[0.3em]
	\frac{1}{2}x^{2}-\frac{1}{32}\theta^2+16(x-\frac{33}{32}\theta)^{2}, &      
	{\theta<x\leq \frac{33}{32}\theta,}\\[0.3em]
	\frac{1}{2}x^{2}-\frac{1}{32}\theta^2,      
	& {x>\frac{33}{32}\theta,}
\end{cases} \\[0.15cm]
& q_{T,t}(\vx)=\frac{1}{2}\sum_{i=0}^{t-1}\Big(\Big(\frac{7}{8}x_{iT} -x_{iT+1}\Big)^2+\sum_{j=1}^{T-1}(x_{iT+j+1}-x_{iT+j})^2\Big), \\
& \text{and}\quad g_{T,t}(\vx)=q_{T,t}(\vb-\vx)+\sum_{i=1}^{Tt}\psi_{b_i}(b_i-x_i), 
\end{align*}
where we define $x_0=0$ and $\vb\in \mathbb{R}^{Tt}$ with $b_{kT+\tau}=({7}/{8})^k$ for~$k\in \{0\}\cup[T-1]$ and $\tau\in[T]$.
We can verify that 
\begin{align*}
g_{T,t}^*\triangleq \inf_{\vy\in\BR^{Tt}}g_{T,t}(\vy)=0.    
\end{align*}
The following lemma shows the function $g_{T,t}$ holds the zero-chain property \cite{nesterov2018lectures,carmon2020first} and describes its smoothness, PL parameter and optimal function value gap, which results the tight lower bound of full-batch first-order methods \citep[Section 4]{yue2023lower}.
\begin{lemma}\label{lem:basic function} 
The function $g_{T,t}:\BR^{Tt}\to\BR$ holds that:
\begin{enumerate}[topsep=0pt,itemsep=2.5pt,partopsep=2.5pt, parsep=2.5pt]
\item[(a)]\label{con:zero-chain} For any $\vx\in\BR^{Tt}$ satisfying ${\rm supp}(\vx)\subseteq\left\{1,2,\cdots,k\right\}$, it holds ${\rm supp}(\nabla g_{T,t}(\vx))\subseteq\left\{1,2,\cdots,k+1\right\}$.
\item[(b)]\label{con:smooth} The function $g_{T,t}$ is 37-smooth.
\item[(c)]\label{con:PL} The function $g_{T,t}$ is $1/(aT)$-PL with $a=19708$.  
\item[(d)] The function $g_{T,t}$ satisfies that $g_{T,t}(\mathbf{0})-g_{T,t}^*\leq3T$. 
\item[(e)] For any  $\delta<0.01$, $t=2\lfloor\log_{{8}/{7}}{2}/{(3\delta)}\rfloor$ and $\vx\in\BR^{Tt}$ satisfying ${\rm supp}(\vx)\subseteq\left\{1,2,\cdots,Tt/2\right\}$, it holds that $g_{T,t}(\vx)-g_{T,t}^*> 3T\delta$. 
\end{enumerate}
\end{lemma}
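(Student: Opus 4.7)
The plan is to verify each of the five properties by direct computation on the explicit formulas for $g_{T,t}$, $q_{T,t}$, and $\psi_\theta$, closely following \citet[Section~4]{yue2023lower} from whom these building blocks are adopted. The overall strategy is first to establish that $\vb$ is the unique global minimizer with $g_{T,t}(\vb)=0$, and then to read off each quantitative bound by exploiting the block structure of $q_{T,t}$ and the piecewise form of $\psi_\theta$.

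For (a), I will compute the coordinate partial derivatives of $g_{T,t}$. Since $\psi_{b_i}(b_i-x_i)$ depends only on $x_i$ and $q_{T,t}(\vb-\vx)$ couples only indices differing by at most one (with an extra $7/8$ factor across each block boundary), the partial at index $m$ involves at most $\{x_{m-1},x_m,x_{m+1}\}$. To go beyond index $k+1$ when $\mathrm{supp}(\vx)\subseteq\{1,\ldots,k\}$ would require a nonzero residual coming solely from $\vb$, which vanishes by the three identities $b_{iT+j+1}-b_{iT+j}=0$ within a block, $\tfrac{7}{8}b_{iT}-b_{iT+1}=0$ across a block boundary, and $\psi'_\theta(\theta)=0$ (the last read off from the piecewise formula at $x=\theta$). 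For (b), the Hessian decomposes into that of $q_{T,t}(\vb-\vx)$, which is block tridiagonal with operator norm at most $4$, plus a diagonal of $\psi''_{b_i}(b_i-x_i)\in\{1,-31,33,0\}$ entries of operator norm at most $33$, totalling $37$. For (d), evaluating at $\vzero$ gives $q_{T,t}(\vb)=\tfrac{1}{2}$ (only the $i=0$ term survives via the $y_0=0$ convention) and $\sum_i\psi_{b_i}(b_i)=\tfrac{31}{64}\sum_i b_i^2$, which telescopes through a geometric sum to at most $\tfrac{31T}{15}+\tfrac{1}{2}<3T$.

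The main obstacle is (c). I will bound $2g_{T,t}(\vx)/\Norm{\nabla g_{T,t}(\vx)}^2$ by $aT$ with $a=19708$ by splitting on whether each coordinate $b_i-x_i$ lies in a quadratic or a nonconvex piece of $\psi_{b_i}$. Along the quadratic pieces, $g_{T,t}$ restricts to a positive definite quadratic whose smallest eigenvalue is $\Omega(1/T)$ because of the chain structure with a $7/8$ contraction per block; this furnishes the $1/(aT)$ factor. Along the nonconvex pieces, $|\psi'_\theta|$ is bounded below away from the spurious critical points, and this directly dominates the local function value gap. Careful constant tracking through both regimes reproduces the value $a=19708$ of \citet{yue2023lower}.

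For (e), minimizing $g_{T,t}$ over vectors supported in $\{1,\ldots,Tt/2\}$ cannot fully optimize the $\psi$ wells at indices $i>Tt/2$ nor flatten the boundary term of $q_{T,t}$ at the cutoff; quantifying this trade-off yields a lower bound on the restricted minimum controlled by a geometric factor in $(7/8)^{t}$ times $T$. Substituting $t=2\lfloor\log_{8/7}(2/(3\delta))\rfloor$ then forces $g_{T,t}(\vx)-g_{T,t}^*>3T\delta$. Parts (a), (b), (d), and (e) thus amount to structural bookkeeping against the construction, whereas (c) requires the two-regime PL argument outlined above.
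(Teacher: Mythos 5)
The paper itself contains no proof of this lemma: it is imported verbatim from \citet[Section 4]{yue2023lower}, so there is no in-paper argument to compare your route against. Judged on its own terms, your verifications of (a), (b) and (d) are correct direct computations (minor slip: $\psi_\theta''\in\{1,-31,33\}$, the fourth piece has second derivative $1$, not $0$; the bound $|\psi_\theta''|\le 33$ and hence $4+33=37$ still stands). Parts (c) and (e), however, have genuine gaps.

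For (c), the two-regime split as described avoids the actual crux. Since $\psi_\theta'(\theta)=0$, each summand $\psi_{b_i}(b_i-x_i)$ has a spurious critical point at $x_i=0$ where its value is $\tfrac{31}{64}b_i^2>0$; at such configurations the entire gradient signal must come from the coupling term $q_{T,t}(\vb-\vx)$ and be propagated along a chain of length $T$ against the negative curvature $\psi''=-31$ of the second piece --- this is where the factor $1/T$ in the PL constant originates. Saying that $|\psi_\theta'|$ is ``bounded below away from the spurious critical points'' is precisely declining to treat the hard case. Moreover, your claim that the quadratic restriction has smallest eigenvalue $\Omega(1/T)$ is not where the rate comes from: when every coordinate lies in the first piece of $\psi$, the Hessian is $\nabla^2 q_{T,t}+\mI\succeq \mI$, so that regime is $1$-strongly convex and cannot produce $1/(aT)$.

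For (e), your own (correct) identification of the restricted minimum as $\Theta\big(T(7/8)^t\big)$ is incompatible with the stated conclusion. The feasible point $\vx$ with $x_i=b_i$ for $i\le Tt/2$ and $x_i=0$ otherwise has
\begin{align*}
g_{T,t}(\vx)=\frac{1}{2}\Big(\frac{7}{8}\Big)^{t}+\frac{31}{64}\,T\sum_{k=t/2}^{t-1}\Big(\frac{7}{8}\Big)^{2k}\le 3T\Big(\frac{7}{8}\Big)^{t},
\end{align*}
since the only surviving term of $q_{T,t}(\vb-\vx)$ is the single block-boundary term at the cutoff. With $t=2\lfloor\log_{8/7}(2/(3\delta))\rfloor$ one has $(7/8)^{t}\le(12\delta/7)^2<3\delta^2$, so this value is below $9T\delta^2<3T\delta$ for every $\delta<0.01$ --- the opposite of the claim. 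Hence the final substitution step in your argument for (e) cannot go through: what you obtain is $\Theta(T\delta^2)$, short of the target by a factor of $\delta$. The conclusion would follow if the undiscovered blocks contributed $\Theta\big(T(7/8)^{t/2}\big)$ rather than $\Theta\big(T(7/8)^{t}\big)$ (e.g., with $t=\lfloor\log_{8/7}(2/(3\delta))\rfloor$ and the same half-support condition); as the parameters are transcribed here, the construction and the claim do not line up, and any proof of (e) must first resolve that discrepancy rather than assert the substitution.
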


We can establish the mean-squared smooth functions by the composition of orthogonal transform.
Compared with the study on convex and general nonconvex problem~\cite{carmon2020lower,zhou2019lower}, we present the following lemma by additionally considering the PL condition.
\begin{lemma}\label{lem:finite-sum}
Given a function $g:\mathbb{R}^m\rightarrow\mathbb{R}$ that is $\hat{L}$-smooth and $\hat{\mu}$-PL, define $f_i(\vx)=g(\mU^{(i)}\vx)$ with $\vx\in\BR^{mn}$, $i\in[n]$ and $\mU^{\left(i\right)}=[\ve_{(i-1)m+1},\cdots,\ve_{im}]^\top\in\BR^{m\times mn}$.
Then the function set $\{f_i:\mathbb{R}^{mn}\rightarrow\mathbb{R}\}_{i=1}^n$ is $\hat{L}/\sqrt{n}$-mean-squared smooth, 
and the function $f(\cdot)=\frac{1}{n}\sum_{i=1}^{n}f_i(\cdot)$ is~$\hat{\mu}/n$-PL with $f(\mathbf{0})-\inf_{\vy\in\BR}f(\vy)=g(\mathbf{0})-\inf_{\vy\in\BR}g(\vy)$.  
\end{lemma}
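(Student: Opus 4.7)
The plan is to exploit the simple block structure that the matrices $\mU^{(i)}$ impose: $\mU^{(i)}\vx$ extracts the $i$-th block of $m$ consecutive coordinates of $\vx\in\BR^{mn}$. Writing $\vx=(\vx_1^\top,\dots,\vx_n^\top)^\top$ with $\vx_i\in\BR^m$, we have $f_i(\vx)=g(\vx_i)$, so the construction lifts $g$ to a separable sum in which the $i$-th summand depends only on its own block. By the chain rule, $\nabla f_i(\vx)=(\mU^{(i)})^\top\nabla g(\vx_i)$ is supported on the $i$-th block, and consequently $\nabla f(\vx)=\frac{1}{n}\sum_{i=1}^n\nabla f_i(\vx)$ has $i$-th block equal to $\frac{1}{n}\nabla g(\vx_i)$ and zeros elsewhere. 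With this picture in hand, each of the three claims becomes a short calculation.

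For the mean-squared smoothness bound I would use the disjoint-support property to write
$$\frac{1}{n}\sum_{i=1}^n\Norm{\nabla f_i(\vx)-\nabla f_i(\vy)}^2=\frac{1}{n}\sum_{i=1}^n\Norm{\nabla g(\vx_i)-\nabla g(\vy_i)}^2\le \frac{\hat{L}^2}{n}\sum_{i=1}^n\Norm{\vx_i-\vy_i}^2=\frac{\hat{L}^2}{n}\Norm{\vx-\vy}^2,$$
which gives the constant $\hat{L}/\sqrt{n}$. For the initial-gap identity, separability yields $\inf_{\vy\in\BR^{mn}}f(\vy)=\inf_{\vy\in\BR^m}g(\vy)$, and $f(\vzero)=g(\vzero)$ blockwise, so $f(\vzero)-\inf f=g(\vzero)-\inf g$.

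The only step where one has to be careful is the PL bound, because two different factors of $n$ enter. I would apply the $\hat\mu$-PL inequality of $g$ blockwise and combine with the block-diagonal gradient identity $\Norm{\nabla f(\vx)}^2=\frac{1}{n^2}\sum_{i=1}^n\Norm{\nabla g(\vx_i)}^2$ to get
$$f(\vx)-\inf f=\frac{1}{n}\sum_{i=1}^n\bigl(g(\vx_i)-\inf g\bigr)\le \frac{1}{2n\hat\mu}\sum_{i=1}^n\Norm{\nabla g(\vx_i)}^2=\frac{n}{2\hat\mu}\Norm{\nabla f(\vx)}^2,$$
which is exactly $(\hat\mu/n)$-PL. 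There is no real obstacle beyond this bookkeeping: the $1/n$ averaging simultaneously buys a $\sqrt{n}$ improvement in mean-squared smoothness and costs a factor $n$ in the PL parameter, and this interplay is precisely what makes the lifted problem useful for the subsequent IFO lower-bound construction, since the condition number of $f$ becomes $\kappa_f=L/\mu = (\hat{L}/\sqrt{n})/(\hat\mu/n) = \sqrt{n}\,\hat\kappa$.
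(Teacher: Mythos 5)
Your proof is correct and follows essentially the same route as the paper: identify $f_i(\vx)=g(\vx_i)$ via the block structure of $\mU^{(i)}$, use the disjoint supports of the $\nabla f_i$ to compute the mean-squared smoothness constant and the identity $\Norm{\nabla f(\vx)}^2=\frac{1}{n^2}\sum_{i=1}^n\Norm{\nabla g(\vx_i)}^2$, and apply the PL inequality of $g$ blockwise. Your bookkeeping of the factor $2$ in the PL step is in fact cleaner than the paper's displayed chain, which silently drops it.
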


To achieve the hard instance functions with the desired smoothness and PL parameters, we also require the scaling lemma as follows.
\begin{lemma}\label{lem:scale}
	Suppose the function $g:\mathbb{R}^m\rightarrow\mathbb{R}$ is $\hat{L}$-smooth, $\hat{\mu}$-PL and has lower bound $g^*=\inf_{\vy\in\BR^m} g(\vy)$, 
    then the function $\hat g(\vx)=\alpha g(\beta \vx)$ is $\alpha\beta^2\hat{L}$-smooth, $\alpha\beta^2\hat{\mu}$-PL and satisfies that $\hat g(\mathbf{0})-\hat g^*=\alpha (g(\mathbf{0})-g^*)$ for any $\alpha,\beta>0$, where $\hat g^*=\inf_{\vy\in\BR^m} \hat g(\vy)$.
\end{lemma}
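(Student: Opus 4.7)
The plan is to handle the three claims in Lemma \ref{lem:scale} in turn, each reducing to a direct calculation through the chain rule for $\hat g(\vx)=\alpha g(\beta\vx)$. The only preliminary fact I would record up front is that $\nabla\hat g(\vx)=\alpha\beta\,\nabla g(\beta\vx)$, which follows from differentiating the composition of $g$ with the scaling $\vx\mapsto\beta\vx$.

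First I would verify the smoothness claim. Starting from the gradient identity above and substituting into the definition of smoothness, the assumed $\hat L$-smoothness of $g$ gives
\begin{align*}
\Norm{\nabla\hat g(\vx)-\nabla\hat g(\vy)}=\alpha\beta\Norm{\nabla g(\beta\vx)-\nabla g(\beta\vy)}\leq\alpha\beta\hat L\Norm{\beta\vx-\beta\vy}=\alpha\beta^2\hat L\Norm{\vx-\vy},
\end{align*}
which yields the claimed $\alpha\beta^2\hat L$-smoothness.

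Next I would establish the PL inequality. Because $\beta>0$, the map $\vx\mapsto\beta\vx$ is a bijection on $\BR^m$, so $\hat g^*=\alpha g^*$, and in particular $\hat g(\mathbf{0})-\hat g^*=\alpha(g(\mathbf{0})-g^*)$, settling the third claim immediately. For the PL bound, I would chain together (i) the identity $\hat g(\vx)-\hat g^*=\alpha(g(\beta\vx)-g^*)$, (ii) the assumed $\hat\mu$-PL bound applied at the point $\beta\vx$, and (iii) the gradient identity $\Norm{\nabla g(\beta\vx)}^2=(\alpha\beta)^{-2}\Norm{\nabla\hat g(\vx)}^2$, arriving at
\begin{align*}
\hat g(\vx)-\hat g^*\leq\frac{\alpha}{2\hat\mu}\Norm{\nabla g(\beta\vx)}^2=\frac{1}{2\alpha\beta^2\hat\mu}\Norm{\nabla\hat g(\vx)}^2,
\end{align*}
which is exactly the $\alpha\beta^2\hat\mu$-PL property of $\hat g$.

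No step here should present any real obstacle; the only mild subtlety worth flagging is the bijectivity argument used to conclude $\hat g^*=\alpha g^*$, since without $\beta>0$ the infima would not be comparable. Everything else is a one-line substitution, and the three claims follow independently from the gradient identity above.
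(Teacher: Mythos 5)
Your proposal is correct and follows essentially the same route as the paper's proof: the chain-rule identity $\nabla\hat g(\vx)=\alpha\beta\,\nabla g(\beta\vx)$ drives both the smoothness and PL calculations, and the value-gap identity comes from $\hat g^*=\alpha g^*$. Your explicit remark on the bijectivity of $\vx\mapsto\beta\vx$ (together with $\alpha>0$) is a slightly more careful justification of $\hat g^*=\alpha g^*$ than the paper's ``we can verify,'' but the argument is otherwise identical.
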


Based on above lemmas \ref{lem:basic function}, \ref{lem:finite-sum} and \ref{lem:scale}, we provide the lower bound of~$\Omega\big(\kappa\sqrt{n}\log(1/\epsilon)\big)$ on IFO complexity for large~$\kappa$.

\begin{theorem}\label{thm:kn}
For any $L,\mu,n,\Delta$ and $\epsilon$ with $\epsilon<0.005\Delta$ and
$L\geq37a\sqrt{n}\mu$, 
there exists $L$-mean-squared smooth function set $\{f_i:\mathbb{R}^{d}\to\BR\}_{i=1}^n$ 
with $d=\fO(\kappa\sqrt{n}\log(1/\epsilon))$ 
such that the function $f(\cdot)=\frac{1}{n}\sum_{i=1}^{n}f_i(\cdot)$ is $\mu$-PL with~$f(\vx^0)-f^*\leq\Delta$. In order to find an $\epsilon$-suboptimal solution of problem $\min_{\vx\in\BR^d}f(\vx)$, any IFO algorithm needs at least $\Omega\big(\kappa\sqrt{n}\log(1/\epsilon)\big)$ IFO calls.
\end{theorem}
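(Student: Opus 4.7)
The plan is to build a hard instance by chaining the three preceding lemmas and then derive the lower bound from a block-wise zero-chain argument. Fix $T = \lfloor \kappa/(37a\sqrt{n}) \rfloor$, which is at least $1$ by the hypothesis $L \geq 37a\sqrt{n}\,\mu$, and fix $t = 2\lfloor \log_{8/7}(\Delta/(3\epsilon)) \rfloor$ so that $\delta \triangleq 2\epsilon/\Delta < 0.01$ (using $\epsilon < 0.005\Delta$). Set $m = Tt$, $\alpha = \Delta/(3T)$, and $\beta = T\sqrt{3a\mu n/\Delta}$, and define the hard instance on $\BR^{mn}$ by
\begin{align*}
f_i(\vx) \;=\; \alpha\, g_{T,t}\!\big(\beta\, \mU^{(i)}\vx\big), \qquad i\in[n].
\end{align*}
Combining Lemma~\ref{lem:scale} with Lemma~\ref{lem:finite-sum} shows that $\{f_i\}_{i=1}^n$ is $(37a\mu T\sqrt{n})$-mean-squared smooth with $37a\mu T\sqrt{n}\le L$, hence $L$-mean-squared smooth, and that $f=(1/n)\sum_i f_i$ is exactly $\mu$-PL; by property (d) of Lemma~\ref{lem:basic function}, the initial gap at $\vx^0=\vzero$ satisfies $f(\vzero)-f^* = \alpha(g_{T,t}(\vzero)-g_{T,t}^*)\le 3\alpha T = \Delta$. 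The ambient dimension is $d=nm=nTt=\fO(\kappa\sqrt{n}\log(1/\epsilon))$, as required.

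For the lower bound, observe that each $\mU^{(i)}$ extracts the $i$-th consecutive block of coordinates $B_i=\{(i-1)m+1,\dots,im\}$, so $\nabla f_i(\vx)$ vanishes outside $B_i$; an IFO query to $f_j$ with $j\neq i$ therefore cannot extend the $B_i$-support of the iterate, while an IFO query to $f_i$ extends it by at most one coordinate via the zero-chain property (a) of Lemma~\ref{lem:basic function} applied to the inner $g_{T,t}$. An induction on the step count $s$ in Definition~\ref{def:linear-span}, starting from $\vx^0=\vzero$, yields $|\mathrm{supp}(\vx^s)\cap B_i|\le N_i^s$, where $N_i^s$ denotes the number of IFO queries issued to $f_i$ through step $s$.

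The main step is then to translate this bookkeeping into a function-value bound. Call block $i$ \emph{bad} (at step $s$) if $N_i^s\le m/2$, so that $\mathrm{supp}(\beta\mU^{(i)}\vx^s)\subseteq\{1,\dots,m/2\}$; property (e) of Lemma~\ref{lem:basic function} (applicable because $\delta<0.01$) then gives $g_{T,t}(\beta\mU^{(i)}\vx^s)-g_{T,t}^* > 3T\delta$, which contributes more than $3T\delta\,\alpha/n = 2\epsilon/n$ to $f(\vx^s)-f^* = (\alpha/n)\sum_i [g_{T,t}(\beta\mU^{(i)}\vx^s)-g_{T,t}^*]$. If $\vx^s$ is $\epsilon$-suboptimal, summing over blocks forces the number of bad blocks to be strictly less than $n/2$; hence more than $n/2$ blocks are good and each requires at least $m/2$ queries to its $f_i$, so the total IFO count exceeds $nm/4 = nTt/4 = \Omega(\kappa\sqrt{n}\log(1/\epsilon))$. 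The same argument applies pathwise to algorithms with stochastic index choice.

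The main obstacle is the joint calibration of the parameters $(\alpha,\beta,T,t,\delta)$: smoothness, PL parameter, initial gap, and the per-block versus global suboptimality budget must match up simultaneously, and it is specifically the factor-of-two slack $\delta=2\epsilon/\Delta$ (rather than $\delta=\epsilon/\Delta$) that turns the trivial bound $\#\text{bad}<n$ into the effective bound $\#\text{bad}<n/2$. This is exactly where the hypothesis $\epsilon<0.005\Delta$ enters the proof.
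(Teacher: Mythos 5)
Your proposal is correct and follows essentially the same route as the paper: the same hard instance $f_i(\vx)=\alpha g_{T,t}(\beta\,\mU^{(i)}\vx)$ built from Lemmas~\ref{lem:basic function}--\ref{lem:scale} with the same $T$, $t$, $\alpha$ and $\delta=2\epsilon/\Delta$, followed by the same per-block zero-chain bookkeeping and pigeonhole count yielding $nTt/4$. The only (immaterial) difference is your normalization of $\beta$, which makes the PL constant exactly $\mu$ and the mean-squared smoothness at most $L$, whereas the paper's choice $\beta=\sqrt{3\sqrt{n}LT/(37\Delta)}$ makes the smoothness exactly $L$ and the PL constant at least $\mu$; both satisfy the theorem's requirements.
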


We present the proof sketch of Theorem \ref{thm:kn} as follows and defer the details in Appendix \ref{sec:thm:kn}.
Specifically, we take the function $g_{T,t}:\BR^{Tt}\to\BR$ with $T=\lfloor L/(37a\sqrt{n}\mu)\rfloor$ and $t=2\lfloor\log_{{8}/{7}}{\Delta}/{(3\epsilon)}\rfloor$, and let $\hat g(\vx)=\alpha g_{T,t}(\beta\vx)$ with $\alpha=\Delta/(3T)$ and $\beta=\sqrt{3\sqrt{n}LT/(37\Delta)}$.
We also define function set $\{f_i:\BR^{nTt}\to\BR\}_{i=1}^n$ by following Lemma~\ref{lem:finite-sum} with $g(\vx)=\hat g(\vx)$ and $m=Tt$.
Then statements (b)-(d) of Lemma \ref{lem:basic function} and the scaling property shown in Lemma~\ref{lem:scale} means such construction results that the condition number of the problem is $\kappa=L/\mu$ and the optimal function value is gap $\Delta$.
Finally, the statements (a) and (e) of Lemma~\ref{lem:basic function} leads to the desired lower complexity bound.

We then consider the case of $n=\Omega(\kappa^2)$. 
Following the analysis by \citet[Theorem 2]{li2021page}, we establish the lower bound of $\Omega(n)$ for IFO complexity by introducing $\{f_i:\BR^d\to\BR\}_{i=1}^n$ 
with~$f_i(\vx)=c\left< \mathbf{u}_i, \vx \right>+\frac{L}{2}\Vert \vx\Vert^2$, where 
$\mathbf{u}_i=[\BI(\lceil{1}/{(2n)}\rceil=i),\cdots,\BI(\lceil{2n^2}/{(2n)}\rceil=i)]^\top\in\mathbb{R}^{2n^2}$, 
$d=2n^2$, $c=\sqrt{L\Delta}$ and $\BI(\cdot)$ is the indicator function.
We can verify $\{f_i:\BR^d\to\BR\}_{i=1}^n$ is $L$-mean-squared smooth and $f(\cdot)=\frac{1}{n}\sum_{i=1}^nf_i(\cdot)$ is $\mu$-PL with the optimal function value gap~$\Delta$.
Additionally, each $f_i(\cdot)$ holds zero-chain property, which leads to the lower bound of $\Omega(n)$ on IFO complexity.
We formally present this result in the following theorem and leave the detailed proof in Appendix \ref{sec:thm:n}.
\begin{theorem}\label{thm:n}
  For any $L,\mu,n,\Delta$ and $\epsilon$ with $\epsilon<{\Delta}/{2}$ and~$L\geq\mu$, there exists $L$-mean-squared smooth function set $\{f_i:\BR^d\to\BR\}_{i=1}^n$ with~$d=\fO(n^2)$ such that the function $f(\cdot)=\frac{1}{n}\sum_{i=1}^{n}f_i(\cdot)$ is $\mu$-PL with $f(\vx^0)-f^*\leq \Delta$. In order to find an $\epsilon$-suboptimal solution of problem $\min_{\vx\in\BR^d}f(\vx)$, any IFO algorithm needs at least~$\Omega(n)$ IFO calls.
\end{theorem}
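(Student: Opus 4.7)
The plan is to verify that the explicitly given construction $f_i(\vx)=c\inner{\vu_i}{\vx}+\frac{L}{2}\Norm{\vx}^2$ satisfies all the required regularity properties and then to exploit its block structure with a support-tracking (zero-chain) argument.

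First I would record the key structural fact about the vectors $\vu_i$: by the ceiling-indicator definition, the $j$-th entry of $\vu_i$ is $1$ iff $\lceil j/(2n)\rceil=i$, so the supports of $\vu_1,\dots,\vu_n$ form a disjoint partition of $\{1,\dots,2n^2\}$ into $n$ blocks of size $2n$, and $\sum_{i=1}^n\vu_i=\vone$. Consequently
\begin{align*}
f(\vx)=\frac{c}{n}\inner{\vone}{\vx}+\frac{L}{2}\Norm{\vx}^2, \qquad \vx^*=-\frac{c}{nL}\vone, \qquad f^*=-\frac{c^2}{L}.
\end{align*}
With $c=\sqrt{L\Delta}$ this gives $f(\vzero)-f^*=\Delta$, so $f(\vx^0)-f^*\leq\Delta$ from the starting point $\vx^0=\vzero$.

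Next I would verify the three required properties. For mean-squared smoothness, each $f_i$ is a quadratic with Hessian $L\mI$, so $\nabla f_i(\vx)-\nabla f_i(\vy)=L(\vx-\vy)$, and averaging gives exactly $L^2\Norm{\vx-\vy}^2$. For the PL condition, since $f$ is an $L$-strongly convex quadratic we have $f(\vx)-f^*=\frac{L}{2}\Norm{\vx-\vx^*}^2$ and $\Norm{\nabla f(\vx)}^2=L^2\Norm{\vx-\vx^*}^2$, so $f$ is $L$-PL; because $L\geq\mu$ by hypothesis, $f$ is in particular $\mu$-PL.

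The core of the argument is a support-tracking induction. Let $S_t\subseteq[n]$ denote the set of indices queried by the IFO algorithm up to step $t$, and write $B_i=\mathrm{supp}(\vu_i)$ for the $i$-th block. I claim $\mathrm{supp}(\vx^t)\subseteq\bigcup_{i\in S_t}B_i$. The base case is immediate since $\vx^0=\vzero$. For the inductive step, the new information added at step $s$ is $\nabla f_{i_s}(\vx^s)=c\vu_{i_s}+L\vx^s$, whose support is contained in $B_{i_s}\cup\mathrm{supp}(\vx^s)$; hence any linear combination formed for $\vx^{s+1}$ stays in $\bigcup_{i\in S_{s+1}}B_i$.

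Finally, I would lower bound the suboptimality. Suppose $t<n/2$, so $|S_t|<n/2$ and at least $n/2$ blocks, contributing at least $(n/2)\cdot 2n=n^2$ coordinates, are zero in $\vx^t$. Since every coordinate of $\vx^*$ has magnitude $c/(nL)$,
\begin{align*}
f(\vx^t)-f^*=\frac{L}{2}\Norm{\vx^t-\vx^*}^2\geq\frac{L}{2}\cdot n^2\cdot\frac{c^2}{n^2L^2}=\frac{c^2}{2L}=\frac{\Delta}{2}>\epsilon,
\end{align*}
using the hypothesis $\epsilon<\Delta/2$. Thus reaching $\epsilon$-suboptimality requires at least $n/2=\Omega(n)$ IFO calls, and the dimension is $d=2n^2=\fO(n^2)$ as claimed. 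The proof contains no genuine obstacle since the construction is prescribed; the only point requiring care is making the support-tracking induction compatible with the linear-span Definition \ref{def:linear-span}, including the case of a randomized index schedule (where the bound holds pathwise for every realization of the random indices $i_0,i_1,\dots$).
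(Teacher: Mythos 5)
Your proposal is correct and follows essentially the same route as the paper: the identical hard instance $f_i(\vx)=c\inner{\vu_i}{\vx}+\frac{L}{2}\Norm{\vx}^2$, the same verification of mean-squared smoothness and of the PL property via strong convexity, the same span/support argument showing that fewer than $n/2$ queries leave at least $n^2$ coordinates at zero, and the same resulting bound $f(\vx^t)-f^*\geq\Delta/2>\epsilon$ (you phrase it as $\frac{L}{2}\Norm{\vx^t-\vx^*}^2$ while the paper expands the coordinate-wise sum, but the computation is identical). Your explicit remarks on the block-partition structure of the $\vu_i$ and on handling the randomized index schedule pathwise are welcome clarifications of points the paper leaves implicit.
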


We combine Theorem \ref{thm:kn} and \ref{thm:n} to achieve the lower bound on IFO complexity for the first-order finite-sum optimization under the PL condition.
\begin{corollary}\label{cor:ifo}
    For any $L,\mu,n,\Delta$ and $\epsilon$ with $\epsilon<0.005\Delta$ and $L\geq \mu$.
    there exists $L$-mean-squared smooth function set $\{f_i:\mathbb{R}^{d}\rightarrow\mathbb{R}\}_{i=1}^n$ with $d=\fO(n^2+\kappa\sqrt{n}\log(1/\epsilon)$
    such that the function $f(\cdot)=\frac{1}{n}\sum_{i=1}^{n}f_i(\cdot)$ is $\mu$-PL and satisfies $f(\vx^0)-f^*\leq\Delta$. 
    In order to find an $\epsilon$-suboptimal solution of problem $\min_{\vx\in\BR^d}f(\vx)$, any IFO algorithm needs at least $\Omega\big(n+\kappa\sqrt{n}\log(1/\epsilon)\big)$ IFO calls.
\end{corollary}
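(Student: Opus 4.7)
The plan is to combine the hard instances from Theorems \ref{thm:n} and \ref{thm:kn} via direct sum across orthogonal coordinate blocks. Writing $\vx\in\BR^{d_1+d_2}$ as $(\vx^{(1)},\vx^{(2)})$ with $d_1=2n^2$ and $d_2=\fO(\kappa\sqrt n\log(1/\epsilon))$, I will set $f_i(\vx)=f_i^{(1)}(\vx^{(1)})+f_i^{(2)}(\vx^{(2)})$, where $\{f_i^{(1)}\}_{i=1}^n$ is Theorem \ref{thm:n}'s $L$-mean-squared-smooth and $\mu$-PL instance with gap $\Delta_1$, and $\{f_i^{(2)}\}_{i=1}^n$ is Theorem \ref{thm:kn}'s instance with the same $L,\mu$ and gap $\Delta_2$. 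The constants $\Delta_1,\Delta_2$ will be chosen so that $\Delta_1+\Delta_2\le\Delta$ and each satisfies its theorem's $\epsilon<\mathrm{const}\cdot\Delta_j$ hypothesis; the assumption $\epsilon<0.005\Delta$ leaves ample slack for this splitting.

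Verifying the structural properties is routine and proceeds block-wise. The block-diagonal gradient $\nabla f_i(\vx)=(\nabla f_i^{(1)}(\vx^{(1)}),\nabla f_i^{(2)}(\vx^{(2)}))$ yields $\frac{1}{n}\sum_i\|\nabla f_i(\vx)-\nabla f_i(\vy)\|^2\le L^2\|\vx-\vy\|^2$ by summing the two per-block mean-squared bounds, so $\{f_i\}$ is $L$-mean-squared smooth. Combining $\|\nabla f(\vx)\|^2=\|\nabla f^{(1)}(\vx^{(1)})\|^2+\|\nabla f^{(2)}(\vx^{(2)})\|^2$ with the additive splitting $f-f^*=(f^{(1)}-f^{(1),*})+(f^{(2)}-f^{(2),*})$ transfers the PL parameter $\mu$ to the sum; the initial gap is $\Delta_1+\Delta_2\le\Delta$; and the ambient dimension is $d_1+d_2=\fO(n^2+\kappa\sqrt n\log(1/\epsilon))$ as required.

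For the lower bound itself, the key point is that the linear-span dynamics of Definition \ref{def:linear-span} respect the block decomposition: starting from $\vx^0=\vzero$ and using that each $\nabla f_i$ is block-diagonal, the subsequence $\{\vx^{(j),t}\}_t$ is itself an IFO trajectory for $\{f_i^{(j)}\}$, $j\in\{1,2\}$. Because both summands in $f-f^*$ are nonnegative, any iterate with $f(\vx)-f^*\le\epsilon$ is simultaneously $\epsilon$-suboptimal on each block. Since a single IFO query advances both sub-trajectories at once, the total IFO count is at least the maximum of the two individual lower bounds, namely $\Omega(n)$ from Theorem \ref{thm:n} and $\Omega(\kappa\sqrt n\log(1/\epsilon))$ from Theorem \ref{thm:kn}; the inequality $\max(A,B)\ge (A+B)/2$ then yields the claimed $\Omega(n+\kappa\sqrt n\log(1/\epsilon))$.

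The main obstacle is that Theorem \ref{thm:kn} requires $\kappa\ge 37a\sqrt n$, whereas the corollary only assumes $L\ge\mu$. In the complementary range $\kappa<37a\sqrt n$ the second block cannot be instantiated, but in that regime $\kappa\sqrt n\log(1/\epsilon)=\fO(n\log(1/\epsilon))$, and Theorem \ref{thm:n} alone supplies an $\Omega(n)$ lower bound that matches $\Omega(n+\kappa\sqrt n\log(1/\epsilon))$ up to the logarithmic factor already implicit in the paper's presentation. Besides this case split, the only remaining work is the constant-factor apportionment of $\Delta$ between $\Delta_1$ and $\Delta_2$ so that the hypotheses of both theorems hold simultaneously.
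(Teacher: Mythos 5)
Your proposal is correct in substance, but it takes a more elaborate route than the paper, whose entire proof of this corollary is the single sentence ``this result can be achieved by directly combining Theorem~\ref{thm:kn} and~\ref{thm:n}.'' The intended reading there is the cheap one: for the given parameters, instantiate whichever of the two hard instances yields the larger bound and invoke $\max\{A,B\}\geq (A+B)/2$; since the corollary only asserts existence of \emph{some} hard instance, the instance is allowed to depend on which term dominates, and the hypothesis $\epsilon<0.005\Delta$ passes to Theorem~\ref{thm:kn} verbatim (Theorem~\ref{thm:n} needs only $\epsilon<\Delta/2$). Your direct-sum construction $f_i(\vx)=f_i^{(1)}(\vx^{(1)})+f_i^{(2)}(\vx^{(2)})$ is a legitimate and in fact slightly stronger alternative: the block-diagonal gradients preserve mean-squared smoothness and the PL constant (both quantities decompose additively across blocks), the projected iterates are themselves valid IFO trajectories per Definition~\ref{def:linear-span}, and nonnegativity of each block's gap forces $\epsilon$-suboptimality on both blocks simultaneously, so a \emph{single} instance certifies both terms at once. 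What this buys you is a uniform hard instance; what it costs is the constant in the accuracy hypothesis: any split $\Delta_1+\Delta_2\leq\Delta$ with $\Delta_2<\Delta$ requires $\epsilon<0.005\Delta_2<0.005\Delta$, so ``ample slack'' is an overstatement --- you prove the statement only for $\epsilon<c\Delta$ with $c$ strictly below $0.005$. This is a harmless constant-factor slippage, but it is avoided entirely by the paper's max-of-two-instances reading.

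One further point in your favor: you explicitly confront the regime $\mu\leq L<37a\sqrt{n}\,\mu$, where Theorem~\ref{thm:kn} cannot be instantiated and only the $\Omega(n)$ bound survives, even though the claimed quantity $n+\kappa\sqrt{n}\log(1/\epsilon)$ can be as large as $\Theta(n\log(1/\epsilon))$ there. This logarithmic shortfall is a genuine (if minor) gap in the corollary as stated, and the paper's one-line proof silently inherits it; your proposal is the more honest account of what is actually established.
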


Noticing that the lower bound of $\Omega\big(n+\kappa\sqrt{n}\log(1/\epsilon)\big)$ on IFO complexity shown in Corollary \ref{cor:ifo} nearly matches the upper bound of $\fO\big((n+\kappa\sqrt{n}\,)\log(1/\epsilon)\big)$ achieved by stochastic recursive gradient algorithms~\cite{zhou2019faster,wang2019spiderboost,li2021page}.

\section{Lower Bounds in Decentralized Setting}\label{sec:decentralized}
This section provides lower bounds for decentralized setting. 
The main idea of our construction is splitting the function $g_{T,t}:\BR^{Tt}\to\BR$ (defined in Section \ref{sec:IFO}) as follows
\begin{align*}
    g_{T,t}(\vx)=q_1(\vb-\vx)+q_2(\vb-\vx)+r(\vx),
\end{align*}
where we define the functions $q_1:\BR^{Tt}\to\BR$, $q_2:\BR^{Tt}\to\BR$ and $r:\BR^{Tt}\to\BR$ as 
\begin{align*}
\begin{split}    
&	q_1(\vx)=\frac{1}{2}\sum_{i=1}^{Tt/2}(x_{2i-1}-x_{2i})^2,  \\
&	q_2(\vx)=\frac{1}{2}\sum_{i=0}^{t-1}\left[\Big(\frac{7}{8}x_{iT}-x_{iT+1}\Big)^2+\sum_{j=iT/2+1}^{(i+1)T/2-1} (x_{2j}-x_{2j+1})^2\right],  \\
& \text{and} \quad r(\vx)=\sum_{i=1}^{Tt}\psi_{b_i}(b_i-x_i),
\end{split}
\end{align*}

Based on above decomposition for $g_{T,t}$, we can establish an hard instance of $n$ individual functions for communication complexity and time complexity.
We let $\fG=\{\fV,\fE\}$ be the graph associated to the network  of the agents, where the node set $\fV=\{1,\dots,n\}$ corresponds to the $n$ agents and the edge set $\fE=\{(i,j):\text{node $i$ and node $j$ are connected}\}$ describes the topology for the network of agents.  
For given a subset $\fC\subseteq\mathcal{V}$, we define the function $h_i^\fC(\vx):\BR^{Tt}\rightarrow\BR$~as
\begin{align*}
h_i^\fC(\vx)=
\begin{cases}
\frac{r(\vx)}{n}+\frac{q_1(\vb-\vx)}{\vert \fC\vert},   &{i\in \fC}, \\[0.4em]
\frac{r(\vx)}{n}+\frac{q_2(\vb-\vx)}{\vert \fC_{\sigma}\vert},   &{i\in \fC_{\sigma}}, \\[0.4em]
\frac{r(\vx)}{n},   &{\text{otherwise}},
\end{cases}
\end{align*}
where $\fC_{\sigma}=\left\{v\in\mathcal{V}:{\rm dis}(\fC,v)\geq \sigma\right\}$ and ${\rm dis}(\fC,v)$ is the distance between set $\fC$ and node $v$.

We can verify the function sets $\{h_i^\fC:\BR^{Tt}\rightarrow\BR\}_{i=1}^n$ has the following properties.
\begin{lemma}\label{lem:decentralized instance}
We define $h(\cdot)=\frac{1}{n}\sum_{i=1}^{n}h_i^\fC(\cdot)$, then we have:
\begin{enumerate}[topsep=0pt,itemsep=1.5pt,partopsep=1.5pt, parsep=1.5pt]
\item[(a)] The function sets $\{h_i^\fC:\BR^{Tt}\to\BR\}_{i=1}^n$ is mean-squared smooth with parameter $33/n+\max\left\{2/\vert \fC\vert,2/\vert \fC_{\sigma}\vert\right\}$.
\item[(b)] The function $h:\BR^{Tt}\to\BR$ is $1/\left(anT\right)$-PL.
\item[(c)] The function holds $h(\mathbf{0})-\inf_{\vy\in\BR^{Tt}}h(\vy)\leq3T/n$.
\end{enumerate}
\end{lemma}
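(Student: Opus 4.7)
The plan is to first recognize that summing the $h_i^\fC$ reproduces $g_{T,t}$, so $h=g_{T,t}/n$; this reduces parts (b) and (c) to Lemma~\ref{lem:basic function} via a trivial scaling argument. Part (a) will then follow by bounding the smoothness of each $h_i^\fC$ uniformly in $i$ using the triangle inequality on gradients and the elementary fact that the mean of squares is bounded by the square of the maximum.

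For the aggregation step, observe that $\fC$ and $\fC_\sigma$ are disjoint whenever $\sigma\ge 1$, since every $v\in\fC$ has $\mathrm{dis}(\fC,v)=0<\sigma$. Summing the three-case definition over all agents yields
\[
\sum_{i=1}^n h_i^\fC(\vx) = r(\vx) + q_1(\vb-\vx) + q_2(\vb-\vx) = g_{T,t}(\vx),
\]
so $h=g_{T,t}/n$. The trivial scaling rule---if $g$ is $\mu$-PL with infimum $g^*$, then $g/n$ is $(\mu/n)$-PL with infimum $g^*/n$---combined with Lemma~\ref{lem:basic function}(c),(d) gives parts (b) and (c) immediately: $h$ is $1/(anT)$-PL with $h(\vzero)-\inf h\le 3T/n$.

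For part (a), I first establish smoothness of the three building blocks separately. The function $r$ is coordinate-separable, and inspecting each piece of the piecewise-quadratic $\psi_\theta$ yields $|\psi_\theta''(x)|\le 33$, so $r$ is $33$-smooth. Both $q_1$ and $q_2$ are sums of quadratics of the form $\tfrac{1}{2}(\alpha x_j-x_k)^2$ with $\alpha\in\{7/8,1\}$ on pairs of coordinates; I will verify that the pairs are mutually disjoint---immediate for $q_1$, and for $q_2$ following from checking that within the $i$-th block the first-type pair $(x_{iT},x_{iT+1})$ and the second-type pairs $(x_{2j},x_{2j+1})$ for $j\in[iT/2+1,(i+1)T/2-1]$ cover each coordinate at most once. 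The Hessians of $q_1$ and $q_2$ are then block diagonal with $2\times 2$ blocks of spectral radius $\alpha^2+1\le 2$, so both are $2$-smooth; precomposition by $\vx\mapsto\vb-\vx$ preserves these constants.

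Finally, for $i\in\fC$ one has $\nabla h_i^\fC(\vx)=\nabla r(\vx)/n-\nabla q_1(\vb-\vx)/|\fC|$, for $i\in\fC_\sigma$ the analogous formula with $(q_2,|\fC_\sigma|)$, and otherwise only the first term. The triangle inequality together with the smoothness bounds then gives the uniform estimate
\[
\|\nabla h_i^\fC(\vx)-\nabla h_i^\fC(\vy)\|\le \Big(\tfrac{33}{n}+\max\Big\{\tfrac{2}{|\fC|},\tfrac{2}{|\fC_\sigma|}\Big\}\Big)\|\vx-\vy\|
\]
for every $i$, and the mean-squared bound is immediate since $\tfrac{1}{n}\sum_i a_i^2\le\max_i a_i^2$. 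The main obstacle will be the combinatorial check underlying the $2$-smoothness of $q_2$: one must confirm that the two families of quadratic pairs within a single block $[iT,(i+1)T-1]$ are mutually disjoint, so that the Hessian of $q_2$ is genuinely block diagonal with controlled spectral radii, after which the rest of the argument is essentially mechanical.
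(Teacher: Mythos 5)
Your proposal is correct and follows essentially the same route as the paper: reduce (b) and (c) to Lemma~\ref{lem:basic function} via the identity $h=g_{T,t}/n$ and scaling, and obtain (a) by bounding the smoothness of $r$, $q_1$, $q_2$ separately and combining with the triangle inequality. Your block-diagonal Hessian argument for the $2$-smoothness of $q_1$ and $q_2$ is in fact more carefully justified than the paper's one-line assertion, but the overall structure is identical.
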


Now we provide the lower bounds for communication complexity and time complexity based on the scaling on the functions $\{h_i^\fC:\BR^{Tt}\rightarrow\BR\}_{i=1}^n$.
\begin{lemma}\label{lem:communication time}
	We let $f_i(\vx)=\alpha h_i^\fC(\beta \vx)$ with $\vx\in\BR^{Tt}$ and $t=2\lfloor\log_{{8}/{7}}({2}/{3\delta})\rfloor$ for any $i\in[n]$ and some  $\alpha,\beta>0$. 
    For given $\delta<0.01$, any DFO algorithm takes at least $Tt\sigma/2$ communication complexity and $Tt(1+\sigma\tau)/2$ time complexity to achieve an $3\alpha T\delta/n$-suboptimal solution of problem $\min_{\vx\in\BR^{Tt}}\frac{1}{n}\sum_{i=1}^n f_i(\vx)$.
\end{lemma}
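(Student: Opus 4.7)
The plan is to lift the zero-chain structure of $g_{T,t}$ from Lemma~\ref{lem:basic function}(a) into a decentralized progress argument. Observe first that $\frac{1}{n}\sum_{i=1}^{n} h_i^{\fC}(\vx) = \frac{1}{n} g_{T,t}(\vx)$, so after scaling $\frac{1}{n}\sum_{i=1}^n f_i(\vx) = \frac{\alpha}{n} g_{T,t}(\beta \vx)$. By Lemma~\ref{lem:basic function}(e) and the choice of $t$, any point $\vx$ whose support lies in $\{1,\ldots,Tt/2\}$ already yields a suboptimality strictly larger than $3\alpha T\delta/n$, since $\beta>0$ preserves supports. Hence it suffices to show that every DFO algorithm needs the claimed amount of communication and time before any agent's local memory can hold a vector whose support reaches coordinate $Tt/2+1$.

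The next step is to introduce, for each agent $i$ and each time $s$, the progress index $\pi_i(s) = \max\{k\ge 0 : \exists\, \vy \in \fM_i^s \text{ with } y_k \neq 0\}$, with the convention $\pi_i(s)=0$ when the memory sees only the zero vector. A coordinate-level inspection of $r$, $q_1(\vb-\cdot)$ and $q_2(\vb-\cdot)$ gives the key zero-chain facts: because $\vb$ is piecewise constant on blocks of length $T$, the separable $r$ is coordinate-preserving, whereas the gradient of $q_1(\vb-\vx)$ is supported on the odd-even pairs $(2j-1,2j)$ and the gradient of $q_2(\vb-\vx)$ on the even-odd pairs $(2j,2j+1)$ together with the singleton $\{1\}$ produced by the $x_1^2$ boundary term. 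Consequently an agent $i\in\fC$ (which sees $r/n+q_1(\vb-\cdot)/|\fC|$) can raise $\pi_i$ by one only when $\pi_i$ is odd; an agent $i\in\fC_{\sigma}$ (which sees $r/n+q_2(\vb-\cdot)/|\fC_{\sigma}|$) can raise $\pi_i$ by one only when $\pi_i$ is even (including $\pi_i=0$ via the $x_1^2$ term); and any other agent cannot raise $\pi_i$ at all. Under the linear-span rules of Definition~\ref{dfn:DFO} this yields the invariants $\pi_i(s+1)\le \pi_i(s)+1$ after a local computation and $\pi_i(s)\le \max_{j\in {\rm nbr}(i)} \pi_j(s-\tau)$ after a communication step, both of which are unaffected by the scalar reparametrisation $\vx\mapsto\beta\vx$.

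With $K(s)=\max_i \pi_i(s)$, the alternation argument then claims that each unit increase of $K$ beyond the initial $0\to 1$ step requires at least $\sigma$ communication rounds and at least $1+\sigma\tau$ time units. Indeed, if $K(s)=k$ with $k$ even, the only agents capable of producing $\pi_i=k+1$ by one local computation are those in $\fC_{\sigma}$; but by induction on previous increments the vectors witnessing $\pi_i(s)=k$ sit only in $\fC$-agents, since the preceding transition from $k-1$ odd to $k$ was carried out by a $\fC$-agent using the $q_1$-pair $(k-1,k)$. By the communication invariant and ${\rm dis}(\fC,\fC_{\sigma})\ge\sigma$, transporting this information to $\fC_{\sigma}$ costs at least $\sigma$ successive neighbour exchanges, i.e.\ $\sigma\tau$ time, plus one more time unit for the triggering gradient evaluation. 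The case of $k$ odd is symmetric with the roles of $\fC$ and $\fC_{\sigma}$ swapped.

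Since $K(0)=0$ and any $(3\alpha T\delta/n)$-suboptimal output requires $K$ to exceed $Tt/2$, the algorithm must perform at least $Tt/2$ such communication-bearing increments, giving the claimed total communication cost of at least $Tt\sigma/2$ and total time of at least $Tt(1+\sigma\tau)/2$. The main obstacle I expect is making the type-parity invariant fully rigorous: one must show that at the exact instant $K(s)$ first equals an even $k$, \emph{every} memory with $\pi_i(s)=k$ is confined to $\fC$ (and symmetrically for odd $k$). I would establish this by induction on the number of completed increments, carefully propagating through both local-span and communication-span updates of Definition~\ref{dfn:DFO} the fact that a freshly unlocked coordinate of a given parity can only be held by, and copied to the neighbours of, agents of the matching type, until the next increment flips the parity.
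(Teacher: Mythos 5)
Your proposal is correct and follows essentially the same route as the paper's proof: reduce via Lemma~\ref{lem:basic function}(e) to showing no agent's memory can reach coordinate $Tt/2+1$ cheaply, track the largest nonzero coordinate per agent (your $\pi_i(s)$ is the paper's ${\rm nnz}(s,i)$), exploit the parity-split of $\nabla q_1(\vb-\cdot)$ and $\nabla q_2(\vb-\cdot)$ together with $\psi_{b_i}'(b_i)=0$ for $r$, and charge $\sigma$ communication rounds plus one computation to each increment because the information must alternate between $\fC$ and $\fC_\sigma$, which are at distance at least $\sigma$. The type-parity invariant you flag as the remaining technical point is exactly the step the paper also handles by the same induction on increments, so no gap.
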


For given spectral gap $\gamma\in(0,1]$, we consider the linear graph $\fG=\{\fV,\fE\}$ with the node set $\fV=\{1,\dots,n\}$ and the edge set $\fE=\{(i,j):|i-j|=1, i\in\fV ~~\text{and}~~ j\in\fV\}$.
Combining with the function $f_i(\vx)=\alpha h_i^\fC(\beta \vx)$ defined in Lemma \ref{lem:communication time}, we achieve the lower bounds of communication complexity and time complexity as follows.

\begin{theorem}\label{thm:decentralized-lower}
For any $L,\mu,\Delta,\gamma$ and $\epsilon$ with $L\geq194a\mu$, $\gamma\in(0,1]$ and $\epsilon<0.01\Delta$, there exist matrix $\mW\in\BR^{n\times n}$ with $1-\lambda_2(\mW)\geq\gamma$ and $L$-mean-squared smooth function set $\{f_i:\mathbb{R}^{d}\rightarrow\mathbb{R}\}_{i=1}^n$ with $d=\fO(\kappa\log(1/\epsilon))$ such that $f(\cdot)=\frac{1}{n}\sum_{i=1}^{n}f_i(\cdot)$ is $\mu$-PL with $f(\mathbf{0})-f^*\leq\Delta$ and $\gamma(\mW)=\gamma$. 
In order to find an $\epsilon$-suboptimal solution of problem 
$\min_{\vx\in\BR^d}f(\vx)$, 
any DFO algorithm needs at least $\Omega\big(\kappa/\sqrt{\gamma}
\log(1/\epsilon)\big)$ communication rounds and $\Omega\big(\kappa\log(1/\epsilon)(1+\tau/\sqrt{\gamma})\big)$ time cost.
\end{theorem}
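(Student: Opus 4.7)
The plan is to apply Lemma~\ref{lem:communication time} on a linear (path) graph with two large, maximally separated subsets $\fC$ and $\fC_\sigma$, and then rescale the resulting hard instance to match the prescribed smoothness, PL parameter, and initial suboptimality. Specifically, I would take a network of $n=\Theta(1/\sqrt{\gamma})$ agents arranged on a path, choose the weights of $\mW$ (for example by convex-combining a path-Laplacian-based mixing matrix with $\mI$) so that $1-\lambda_2(\mW)=\gamma$, and split the path into three consecutive blocks of size roughly $n/3$; let $\fC$ be the first block and take the distance parameter $\sigma=\lfloor n/3\rfloor+1$, so that $\fC_\sigma$ contains the last block. This ensures $\sigma=\Theta(1/\sqrt{\gamma})$ while $\min\{|\fC|,|\fC_\sigma|\}=\Theta(n)$, the two quantities that drive the final bound.

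Given this configuration, Lemma~\ref{lem:decentralized instance} tells us that the mean-squared smoothness of $\{h_i^\fC\}$ is at most $\hat L=33/n+6/\lfloor n/3\rfloor=\Theta(1/n)$, that $h=\frac{1}{n}\sum h_i^\fC$ is $(anT)^{-1}$-PL, and that $h(\vzero)-h^*\le 3T/n$. Defining $f_i(\vx)=\alpha h_i^\fC(\beta\vx)$, Lemma~\ref{lem:scale} yields a family $\{f_i\}$ that is $\alpha\beta^2\hat L$-mean-squared smooth, an average $f$ that is $\alpha\beta^2/(anT)$-PL, and an initial gap $f(\vzero)-f^*\le 3\alpha T/n$. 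I would take $T$ to be the largest integer with $an\hat L\cdot T\le\kappa$, giving $T=\Theta(\kappa)$; the hypothesis $L\ge 194a\mu$ is exactly what ensures $T\ge 1$ after the constants in $\hat L$ are tracked. Then setting $\alpha=\Delta n/(3T)$ and $\beta^2=L/(\alpha\hat L)$ makes $\{f_i\}$ exactly $L$-mean-squared smooth, $f$ exactly $\mu$-PL, and $f(\vzero)-f^*=\Delta$, matching the assumptions of the theorem.

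To finish, pick $\delta=\epsilon/\Delta$ so that the suboptimality threshold $3\alpha T\delta/n$ in Lemma~\ref{lem:communication time} equals $\epsilon$; the hypothesis $\epsilon<0.01\Delta$ gives $\delta<0.01$ and $t=2\lfloor\log_{8/7}(2/(3\delta))\rfloor=\Theta(\log(1/\epsilon))$. The lemma then produces the claimed lower bounds $Tt\sigma/2=\Omega(\kappa\log(1/\epsilon)/\sqrt{\gamma})$ on communication rounds and $Tt(1+\sigma\tau)/2=\Omega(\kappa\log(1/\epsilon)(1+\tau/\sqrt{\gamma}))$ on time, with ambient dimension $Tt=\fO(\kappa\log(1/\epsilon))$. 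The main obstacle I anticipate is producing a single mixing matrix that simultaneously realizes an \emph{arbitrary} $\gamma\in(0,1]$, satisfies Assumption~\ref{asm:W}, and preserves the $\sigma=\Theta(1/\sqrt{\gamma})$ separation on the path; the convex-combination trick $\mW=(1-\eta)\mI+\eta\mW_0$ for a path-graph $\mW_0$ and suitably chosen $\eta\in(0,1]$ is the natural resolution. The rest of the argument is bookkeeping with the constants from Lemmas~\ref{lem:decentralized instance} and~\ref{lem:scale} and integer rounding to keep $T$ a positive integer.
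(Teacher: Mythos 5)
Your proposal follows essentially the same route as the paper's proof: a path graph of $n=\Theta(1/\sqrt{\gamma})$ agents, the split $\{h_i^{\fC}\}$ from Lemma~\ref{lem:decentralized instance} with $\sigma=\Theta(n)$ and $\min\{|\fC|,|\fC_\sigma|\}=\Theta(n)$, the rescaling of Lemma~\ref{lem:scale} with $T=\Theta(\kappa)$ and $t=\Theta(\log(1/\epsilon))$, and then Lemma~\ref{lem:communication time} with $\delta=\epsilon/\Delta$. The paper uses slightly different block sizes ($|\fC|=\lceil n/32\rceil$, $\sigma=\lceil 15n/16\rceil-1$) and realizes the exact spectral gap by perturbing a single edge weight of the path Laplacian and invoking continuity of eigenvalues, but these choices are interchangeable with yours.

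The one concrete gap is the regime $\gamma>\iota_3=1/3$. On a path of $m\geq 3$ nodes every admissible mixing matrix has $1-\lambda_2(\mW)\leq\iota_m\leq 1/3$, so neither your convex-combination trick $\mW=(1-\eta)\mI+\eta\mW_0$ (which only \emph{shrinks} the gap of $\mW_0$) nor the paper's edge-weight perturbation can realize $\gamma\in(1/3,1]$ on a path, and a two-node path leaves no room for three blocks. The paper handles this with a separate case: a fully connected graph on $n=3$ nodes with one adjustable edge weight, $\fC=\{1\}$ and $\sigma=1$, which suffices because for $\gamma=\Theta(1)$ the target bound degenerates to $\Omega(\kappa\log(1/\epsilon))$ and $\sigma=1\geq 1/\sqrt{3\gamma}$. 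You should add this case (or an equivalent one); the rest of your argument, including the constant tracking that makes $L\geq 194a\mu$ guarantee $T\geq 1$ (and $T$ even, which the decomposition into $q_1,q_2$ requires), goes through as you describe.
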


The lower complexity bound on LFO complexity for decentralized setting can be achieved by applying Corollary \ref{cor:ifo} on fully connected network.

\begin{corollary}\label{cor:decentralized-lfo}
For any $L,\mu,n,\Delta,\gamma$ and $\epsilon$ with $\epsilon<0.005\Delta$, 
$L\geq\mu$ and $\gamma\in(0,1]$ , 
there exist mixing matrix $\mW\in\BR^{n\times n}$ with $1-\lambda_2(\mW)\geq\gamma$ and $L$-mean-squared smooth function set $\{f_i:\mathbb{R}^{d}\rightarrow\mathbb{R}\}_{i=1}^n$ with $d=\fO(n^2+\kappa\sqrt{n}\log(1/\epsilon)$ such that the function $f(\cdot)=\frac{1}{n}\sum_{i=1}^{n}f_i(\cdot)$ is $\mu$-PL and satisfies $f(\vx^0)-f^*\leq\Delta$. 
In order to find an $\epsilon$-suboptimal solution of problem $\min_{\vx\in\BR^d}f(\vx)$, any DFO algorithm needs at least $\Omega\big(n+\kappa\sqrt{n}\log(1/\epsilon)\big)$ LFO calls.
\end{corollary}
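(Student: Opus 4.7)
The plan is to reduce the LFO lower bound in the decentralized setting to the IFO lower bound of Corollary \ref{cor:ifo}, using a fully connected network with a well-chosen mixing matrix so that the DFO constraints collapse into those of a centralized IFO algorithm.

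First, I would realize the required network. Take $\fG$ to be the complete graph on $n$ nodes and set $\mW=\vone\vone^\top/n$. This matrix is symmetric and doubly stochastic with spectrum $\{1,0,\dots,0\}$, its entries equal $1/n>0$ everywhere, and $1-\lambda_2(\mW)=1\geq\gamma$ for any prescribed $\gamma\in(0,1]$, so Assumption \ref{asm:W} is satisfied. Next, I would take the hard instance $\{f_i:\BR^d\to\BR\}_{i=1}^n$ supplied by Corollary \ref{cor:ifo} with $d=\fO(n^2+\kappa\sqrt{n}\log(1/\epsilon))$, and place $f_i$ on agent $i$. The parameters $L$-mean-squared smoothness, $\mu$-PL, and $f(\vx^0)-f^*\leq\Delta$ are inherited directly.

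The core step is a simulation argument: on this instance any DFO algorithm $\mathcal{A}$ that finds an $\epsilon$-suboptimal solution using $N$ total LFO calls (summed over agents and time) can be emulated by a centralized IFO algorithm $\mathcal{A}'$ that uses exactly $N$ IFO calls. I would serialize the LFO calls of $\mathcal{A}$ in time order, breaking ties arbitrarily, to obtain a query sequence $(i_0,\vy^0),(i_1,\vy^1),\dots,(i_{N-1},\vy^{N-1})$. A straightforward induction on the time step $s$ then shows that every vector in any agent's memory lies in
\begin{align*}
\mathrm{Lin}\big(\{\vx^0\}\cup\{\nabla f_{i_k}(\vy^k): \text{query }k\text{ issued by time }s\}\big).
\end{align*}
This follows from Definition \ref{dfn:DFO}: a local computation at agent $j$ enlarges its memory only by $\nabla f_j$ of a previously stored point, which is exactly one of the $\nabla f_{i_k}(\vy^k)$; and a single round of communication on the complete graph makes every local memory a subset of this common linear span, since every node neighbours every other. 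Because the $\epsilon$-suboptimal output of $\mathcal{A}$ lies in some local memory, the IFO algorithm $\mathcal{A}'$ that replays the same query sequence and forms the same linear combination produces an $\epsilon$-suboptimal solution with only $N$ IFO calls, and its $\vy^k$ satisfy the linear-span constraint of Definition \ref{def:linear-span}. Corollary \ref{cor:ifo} then forces $N=\Omega(n+\kappa\sqrt{n}\log(1/\epsilon))$.

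The main obstacle is the bookkeeping in this induction: Definition \ref{dfn:DFO} splits each agent's memory into a computation part and a communication part with a delay $\tau$, and local computations may happen in parallel with pending messages in transit. Since we are only counting oracle calls and not wall-clock time, the delay $\tau$ is harmless; nevertheless the cleanest way to run the induction is to schedule the simulated IFO queries in an order consistent with the temporal dependencies of $\mathcal{A}$, so that the span used to define $\vy^k$ in $\mathcal{A}'$ is genuinely the span of responses to queries $0,\dots,k-1$, matching the IFO iteration constraint.
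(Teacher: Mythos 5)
Your proposal is correct and takes essentially the same route as the paper, which proves this corollary by simply invoking Corollary~\ref{cor:ifo} on a fully connected network; your explicit construction of $\mW=\vone\vone^\top/n$ and the serialization/simulation argument reducing a DFO algorithm on the complete graph to an IFO algorithm with the same number of oracle calls supply the details the paper leaves implicit.
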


\section{Decentralized First-Order Algorithms}\label{sec:algorithm}

\begin{algorithm}[t]
\caption{GD} \label{alg:gd}
\begin{algorithmic}[1]
	\STATE \textbf{Input:} initial point $\vx^0\in\BR^d$, iteration number $T$ and stepsize $\eta>0$  \\[0.12cm]
	\STATE \textbf{for} $t = 0, 1, \dots, T-1$ \textbf{do}\\[0.12cm]
	\STATE\quad $\vx^{t+1} = \vx^t - \eta \nabla f(\vx^t)$ \\[0.05cm] 
	\STATE\textbf{end for} \\[0.05cm]
	\STATE \textbf{Output:} $\vx^T$ 
\end{algorithmic}
\end{algorithm}

\begin{algorithm}[t]
\caption{$\FM(\mY^0,\mW, K)$} \label{alg:acc-gossip}
\begin{algorithmic}[1]
	\STATE \textbf{Initialize:} $\mY^{-1}=\mY^{0}$ \\[0.12cm]
	\STATE $\eta_y=1/\big(1+\sqrt{1-\lambda_2^2(\mW)}\,\big)$ \\[0.12cm]
	\STATE \textbf{for} $k = 0, 1, \dots, K$ \textbf{do}\\[0.12cm]
	\STATE\quad $\mY^{k+1}=(1+\eta_y)\mW\mY^{k}-\eta_y\mY^{k-1}$ \\[0.12cm] 
	\STATE\textbf{end for} \\[0.12cm]
	\STATE \textbf{Output:} $\mY^K$ 
\end{algorithmic}
\end{algorithm} 

\begin{algorithm}[t]
\caption{DGD-GT} \label{alg:DGD-GT}
\begin{algorithmic}[1]
\STATE \textbf{Input:} initial point $\bx_0\in\BR^{1\times d}$, iteration number $T$, \\ stepsize $\eta>0$ and communication numbers $K$ \\[0.12cm]
\STATE $\mX^0=\vone \bx^0$ \\[0.12cm]
\STATE $\mS^0=\nabla \mF(\mX^0)$ \\[0.12cm]
\STATE \textbf{for} $t = 0, \dots, T-1$ \textbf{do}\\[0.12cm]
\STATE\quad $\mX^{t+1} = \FM(\mX^t - \eta \mS^t,\mW,K)$ \\[0.12cm]
\STATE\quad $\mS^{t+1} = \FM(\mS^{t} + \nabla\mF(\mX^{t+1}) - \nabla\mF(\mX^t),\mW,K)$ \\[0.12cm]
\STATE\textbf{end for} \\[0.12cm]
\STATE \textbf{Output:} uniformly sample $\vx^{\rm out}$ from $\{\vx^{T}(i)\}_{i=1}^n$   \\[0.12cm]
\end{algorithmic}
\end{algorithm}

\begin{algorithm}[t]
\caption{DRONE} \label{alg:DRONE}
\begin{algorithmic}[1]
\STATE \textbf{Input:} initial point $\bx^0\in\BR^{1\times d}$, mini-batch size $b$, iteration number $T$, stepsize $\eta>0$, probability $p,q\in(0,1]$ and communication numbers $K$. \\[0.12cm]
\STATE $\mX^0=\vone \bx^0$ \\[0.12cm]
\STATE $\mS^0=\mG^0=\nabla \mF(\mX^0)$ \\[0.12cm]
\STATE \textbf{for} $t = 0, \dots, T-1$ \textbf{do}\\[0.12cm]
\STATE \quad $\zeta^t \sim {\rm Bernoulli}(p)$ \\[0.12cm]
\STATE \quad $[\xi^t_1,\cdots,\xi^t_n]^\top \sim {\rm Multinomial}(b,q\vone)$ \\[0.12cm]
\STATE\quad $\mX^{t+1} = \FM(\mX^t - \eta \mS^t, \mW, K)$ \\[0.12cm]
\STATE\quad \textbf{parallel for} $i = 1, \dots, n$ \textbf{do}\\[0.12cm]
\STATE\quad\quad \textbf{if} $\zeta^t = 1$ \textbf{then} \\[0.12cm]
\STATE\quad\quad\quad $\vg^{t+1}(i) = \nabla f_{i} (\vx^{t+1}(i))$ \\[0.12cm]
\STATE\quad\quad \textbf{else}
\STATE\quad\quad\quad $\vg^{t+1}(i)=\vg^t(i) + \dfrac{\xi^t_i}{bq}\big(\nabla f_i(\vx^{t+1}(i)) - \nabla f_i(\vx^t(i))\big)$
\STATE\quad\quad \textbf{end if} \\[0.12cm]
\STATE\quad\textbf{end parallel for} \\[0.12cm]
\STATE\quad $\mS^{t+1} = \FM(\mS^{t}+\mG^{t+1}-\mG^{t},\mW,K)$ \\[0.12cm]
\STATE\textbf{end for} \\[0.12cm]
\STATE \textbf{Output:} uniformly sample $\vx^{\rm out}$ from $\{\vx^{T}(i)\}_{i=1}^n$   \\[0.12cm]
\end{algorithmic}
\end{algorithm}

It is well-known that GD achieves the linear convergence for minimizing the PL function \cite{karimi2016linear} on single machine. 
For decentralized optimization, it is natural to integrate GD with gradient tracking \cite{shi2015extra,nedic2009distributed,qu2017harnessing} and Chebyshev acceleration \cite{arioli2014chebyshev,scaman2017optimal,song2023optimal,ye2023multi}, leading to Algorithm~\ref{alg:DGD-GT} which is called decentralized gradient descent with gradient tracking (DGD-GT).
The following theorem shows that the communication complexity and the time complexity of DGD-GT nearly match the lower bounds shown in Corollary~\ref{thm:decentralized-lower}. 

\begin{theorem}\label{thm:DGD-GT}
We suppose that Assumption \ref{asm:lower}--\ref{asm:W} hold, then running Algorithm~\ref{alg:DGD-GT} (DGD-GT) with appropriate parameters setting achieves 
$\BE[f(\vx^{\rm out})-f^*] \leq \epsilon$ within 
communication complexity of 
$\tilde\fO\big(\kappa/\sqrt{\gamma}\log(1/\epsilon)\big)$, time complexity of $\tilde\fO\big(\kappa (1+\tau/\sqrt{\gamma}\,)\log(1/\epsilon)\big)$ 
and LFO complexity of $\fO\big(n\kappa\log(1/\epsilon)\big)$ 
in expectation. 
\end{theorem}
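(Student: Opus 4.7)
The plan is to carry out a coupled Lyapunov analysis that ties the suboptimality of the averaged iterate $\bx^t$ to the consensus errors of $\mX^t$ and $\mS^t$, using the Chebyshev-accelerated gossip $\FM$ in place of a single round of mixing. I would start by analyzing $\FM$ itself: since $\mW$ is symmetric with $\mW\vone=\vone$, the Chebyshev recursion in Algorithm~\ref{alg:acc-gossip} preserves the row-means of its input, and a standard polynomial argument yields the contraction
\begin{equation*}
\Norm{\mY^K - \vone\bar\vy^0} \leq 2\bigl(1-\sqrt{1-\lambda_2^2(\mW)}\bigr)^K \Norm{\mY^0 - \vone\bar\vy^0} \leq 2(1-\sqrt{\gamma})^K \Norm{\mY^0 - \vone\bar\vy^0},
\end{equation*}
where $\bar\vy^0 = \frac{1}{n}\vone^\top\mY^0$. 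Applying this to the two $\FM$ calls inside Algorithm~\ref{alg:DGD-GT} and inducting on $t$ yields the gradient-tracking identity $\bs^t = \bg^t \triangleq \frac{1}{n}\sum_{i=1}^n \nabla f_i(\vx^t(i))$ together with the clean mean recursion $\bx^{t+1} = \bx^t - \eta\bg^t$.

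Next I would set up descent on $\bx^t$. Assumption~\ref{asm:as} implies that $f$ is $L$-smooth and moreover $\Norm{\bg^t - \nabla f(\bx^t)}^2 \leq L^2 n^{-1}\Norm{\mX^t - \vone\bx^t}^2$. Combining these with Assumption~\ref{asm:SC} and stepsize $\eta = \Theta(1/L)$ gives a descent inequality of the form
\begin{equation*}
f(\bx^{t+1}) - f^* \leq (1 - c_1\eta\mu)\bigl(f(\bx^t) - f^*\bigr) + c_2 \eta L^2 n^{-1} \Norm{\mX^t - \vone\bx^t}^2.
\end{equation*}
To close the recursion I track $E^t_x \triangleq \Norm{\mX^t - \vone\bx^t}^2$ and $E^t_s \triangleq \Norm{\mS^t - \vone\bs^t}^2$. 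The $\FM$ contraction together with mean-squared smoothness produces bounds of the form $E^{t+1}_x \lesssim \rho_K(E^t_x + \eta^2 E^t_s)$ and $E^{t+1}_s \lesssim \rho_K(E^t_s + L^2\Norm{\mX^{t+1}-\mX^t}^2)$ with $\rho_K = \fO((1-\sqrt{\gamma})^{2K})$, and $\Norm{\mX^{t+1}-\mX^t}^2$ itself is controlled by $\eta^2(E^t_s + n\Norm{\bg^t}^2)$. Choosing $K = \Theta(\log(\kappa)/\sqrt{\gamma})$ makes $\rho_K = \fO(1/\kappa^2)$, small enough that a Lyapunov function $\Psi^t = (f(\bx^t) - f^*) + \lambda_1 E^t_x + \lambda_2 \eta^2 E^t_s$ satisfies $\Psi^{t+1} \leq (1 - \Theta(1/\kappa))\Psi^t$.

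Iterating $T = \fO(\kappa\log(1/\epsilon))$ rounds drives $\Psi^T$ below $\epsilon/2$, and since uniform sampling gives $\BE\Norm{\vx^{\rm out} - \bx^T}^2 = n^{-1}E^T_x$, the $L$-smoothness of $f$ promotes this to $\BE[f(\vx^{\rm out}) - f^*] \leq \epsilon$. For the accounting: $2K = \tilde\fO(1/\sqrt{\gamma})$ communications per outer iteration yield total $\tilde\fO(\kappa/\sqrt{\gamma}\,\log(1/\epsilon))$ communication rounds and $\tilde\fO(\kappa(1+\tau/\sqrt{\gamma})\log(1/\epsilon))$ time, while one full local-gradient sweep per outer iteration gives $\fO(n\kappa\log(1/\epsilon))$ LFO calls. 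The main obstacle is the coupling between $E^t_x$ and $E^t_s$: since $\mS^{t+1}$ depends on $\nabla\mF(\mX^{t+1}) - \nabla\mF(\mX^t)$, its tracking error is driven by the movement of $\mX^t$, which is itself driven by $\mS^t$. The Lyapunov weights $\lambda_1,\lambda_2$ and the threshold for $K$ must be chosen jointly so that all cross terms are absorbed by the PL-driven linear decrease without incurring an extra factor of $\kappa$ or $1/\sqrt{\gamma}$.
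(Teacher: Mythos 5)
Your approach matches the paper's: the authors prove this theorem by specializing the DRONE analysis (Theorem \ref{thm:com}) with $p=1$, so that $U^t=V^t=0$ and the Lyapunov function collapses to $\BE[f(\bx^t)-f^*]+LC^t$ with $C^t=\BE\Norm{\mX^t-\vone\bx^t}^2+\eta^2\BE\Norm{\mS^t-\vone\bs^t}^2$ --- exactly your $\Psi^t$ with $\lambda_1=\lambda_2=L$ --- and the mean-preservation/contraction of $\FM$ (Lemma \ref{lem:FM}), the identity $\bs^t=\bg^t$, and the descent-plus-PL step are all as you describe. Two quantitative slips are worth fixing, though neither changes the $\tilde\fO$ conclusions. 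First, your bound $\Norm{\bg^t-\nabla f(\bx^t)}^2\le L^2 n^{-1}\Norm{\mX^t-\vone\bx^t}^2$ is too strong: Assumption \ref{asm:as} only yields that each individual $f_i$ is $\sqrt{n}L$-smooth (Lemma \ref{lem:smooth}), and since the deviations $\vx^t(i)-\bx^t$ differ across $i$ you cannot invoke mean-squared smoothness directly; the correct constant is $L^2\Norm{\mX^t-\vone\bx^t}^2$, as in Lemma \ref{lem:linear}. The same $\sqrt{n}L$ individual smoothness injects an extra factor of $n$ into $\Norm{\nabla\mF(\mX^{t+1})-\nabla\mF(\mX^t)}^2$, which is why the paper needs $\rho^2 n^2\lesssim 1$ and hence $K=\Theta(\log(n)/\sqrt{\gamma})$ rather than your $K=\Theta(\log(\kappa)/\sqrt{\gamma})$; with $n\gg\kappa$ your stated threshold would not make the consensus recursion contract. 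Both corrections are absorbed by $\tilde\fO(\cdot)$, which hides logarithms in both $n$ and $\kappa$, so the theorem's complexities survive intact.
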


However, the upper bound on LFO complexity of DGD-GT (Algorithm \ref{alg:DGD-GT}) shown in Theorem~\ref{thm:DGD-GT} does not match the lower bound of $\Omega\big(n+\kappa\sqrt{n}\log(1/\epsilon)\big)$ provided by Corollary~\ref{cor:decentralized-lfo}.
Recall that the optimal IFO methods for non-distributed setting are based on the stochastic recursive `gradient~\cite{li2021page,zhou2019faster,wang2019spiderboost}.
We borrow this idea to construct the recursive gradient with respect to local agents, i.e., we update the local gradient estimator by
\begin{align*}
\begin{split}    
\vg^{t+1}(i)  
= \begin{cases}
 \nabla f_{i} (\vx^{t+1}(i)), & \zeta^t=1, \\[0.15cm]
\vg^t(i) + \dfrac{\xi^t_i}{bq}\big(\nabla f_i(\vx^{t+1}(i))-\nabla f_i(\vx^t(i))\big), & \zeta^t=0,
\end{cases}
\end{split}
\end{align*}
where we introduce random variables $\zeta^t \sim {\rm Bernoulli}(p)$ and $[\xi^t_1,\cdots,\xi^t_n]^\top \sim {\rm Multinomial}(b,q\vone)$ with some small probabilities $p$ and~$q$ which encourage only few of agents compute local gradients in most of iterations.
Similar to the procedure of DGD-GT (Algorithm \ref{alg:DGD-GT}), we can also introduce steps of gradient tracking and Chebyshev acceleration to improve the communication efficiency.
Finally, we achieve \underline{d}ecentralized \underline{r}ecursive l\underline{o}cal gradie\underline{n}t d\underline{e}scent (DRONE) method, which is formally presented in Algorithm~\ref{alg:DRONE}.

We analyze the complexity of DRONE by the following Lyapunov function
\begin{align*}
    \Phi^t=\mathbb{E}[f(\bx^t)-f^*]+\alpha U^t+\beta V^t+ L C^t,
\end{align*}
where $\alpha=2\eta/p$, $\beta=8L\rho^2n\eta^2$,
\begin{align*}
    &U^t=\mathbb{E}\Bigg\Vert\frac{1}{n}\sum_{i=1}^{n}\big(\vg^t(i)-\nabla f_i(\vx^t(i))\big)\Bigg\Vert^2, \\
    &V^t=\frac{1}{n}\mathbb{E}\Vert \mG^t-\nabla\mF(\mX^t)\Vert^2,  \\
    &\text{and} ~~~~ C^t= \BE\Vert\mX^t-\mathbf{1}\bx^t\Vert^2+\eta^2\mathbb{E}\Vert\mS^t-\mathbf{1}\bs^t\Vert^2.
\end{align*}      
Compared with the analysis of \citet{luo2022optimal,li2022destress,xin2022fast} for general nonconvex case, we additionally introduce the term of $f^*$ into our Lyapunov function to show the desired linear convergence under the PL condition, i.e., we can show that
\begin{align*}
    \Phi^T \leq (1-\mu\eta)^T\Phi^0
\end{align*}
by taking $\eta=\Theta(1/L)$. 
We present the convergence result of DRONE formally in the following theorem and corollary. 

\begin{theorem}\label{thm:com}
We suppose that Assumption \ref{asm:lower}--\ref{asm:W} hold, then running DRONE (Algorithm \ref{alg:DRONE}) with parameters setting 
\begin{align*}
\begin{split}    
&p\in\left[\frac{1}{n+1},\frac{1}{2}\right],~~~ b\in\left[\frac{1-p}{p},n\right],~~~\eta\leq\min\left\{\frac{1}{20L},\frac{p}{2\mu}\right\},   \\
&~q=\frac{1}{n},~~
K=\left\lceil\frac{\sqrt{2}\,(4+\log n)}{(\sqrt{2}-1)\sqrt{\gamma}}\right\rceil ~~ \text{and} ~~ T \geq \left\lceil\frac{1}{\mu\eta}\log\frac{\Phi^0}{\epsilon}\right\rceil
\end{split}
\end{align*}
achieves output satisfying $\BE[f(\vx^{\rm out})-f^*] \leq \epsilon$.
\end{theorem}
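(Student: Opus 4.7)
The plan is to prove the one-step contraction $\Phi^{t+1} \leq (1-\mu\eta)\Phi^t$ under the stated parameters and then unroll. This gives $\Phi^T \leq (1-\mu\eta)^T \Phi^0 \leq \exp(-\mu\eta T)\Phi^0 \leq \epsilon$ for the prescribed horizon. Since $\vx^{\rm out}$ is sampled uniformly from $\{\vx^T(i)\}_{i=1}^n$, mean-squared smoothness (which implies $L$-smoothness of $f$ by Jensen) gives
\begin{align*}
\BE[f(\vx^{\rm out})-f^*]
\leq \BE[f(\bx^T)-f^*] + \tfrac{L}{2n}\BE\Norm{\mX^T-\vone\bx^T}^2,
\end{align*}
and both right-hand terms are dominated by $\Phi^T$, delivering the claimed $\epsilon$.

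To set up the one-step estimate I would first record three structural facts. (i) Since AccGossip preserves row-means, $\bx^{t+1} = \bx^t - \eta\bs^t$, and an induction starting from $\mS^0 = \mG^0$ yields $\bs^t = \bg^t$ for every $t$. (ii) The choice of $K$ makes the Chebyshev contraction satisfy $\rho^2 \leq 1/(16n^2)$, giving the consensus recursion $C^{t+1} \leq 2\rho^2 C^t + O(\rho^2 n\eta^2 \BE\Norm{\bs^t}^2)$. (iii) The Bernoulli and Multinomial draws yield $\BE[\vg^{t+1}(i)\mid\fF^t] = \nabla f_i(\vx^{t+1}(i))$, i.e., the local gradient estimator is conditionally unbiased.

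Next I would apply the smoothness descent lemma at the virtual average iterate,
\begin{align*}
\BE[f(\bx^{t+1})] \leq \BE[f(\bx^t)] - \eta\BE\inner{\nabla f(\bx^t)}{\bs^t} + \tfrac{L\eta^2}{2}\BE\Norm{\bs^t}^2,
\end{align*}
split $\bs^t - \nabla f(\bx^t)$ into three error sources measured respectively by $U^t$, $V^t$, and $C^t$ (the last arising through the mean-squared smoothness bound on $\tfrac{1}{n}\sum_i \nabla f_i(\vx^t(i)) - \nabla f(\bx^t)$), and invoke $\mu$-PL to convert $-\eta\BE\Norm{\nabla f(\bx^t)}^2$ into $-2\mu\eta\BE[f(\bx^t)-f^*]$. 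In parallel I would derive the recursive variance bounds $U^{t+1} \leq (1-p) U^t + O(L^2 \eta^2 \BE\Norm{\bs^t}^2 + L^2 C^t)$ and $V^{t+1} \leq (1-p)V^t + \tfrac{(1-p)L^2}{bq}\BE\Norm{\mX^{t+1}-\mX^t}^2$, exploiting that $\zeta^t=1$ refreshes the residual to zero, whereas $\zeta^t=0$ yields a PAGE-style recursion whose conditional variance is governed by $bq$.

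The main obstacle is the coefficient bookkeeping that compresses every cross-term into a single $(1-\mu\eta)$-contracting Lyapunov drop. The $L^2\BE\Norm{\mX^{t+1}-\mX^t}^2/(bq)$ contribution to $V^{t+1}$ splits into a piece proportional to $n\eta^2\BE\Norm{\bs^t}^2/b$ (absorbed by $\tfrac{L\eta^2}{2}\BE\Norm{\bs^t}^2$ from the descent lemma once $b \geq (1-p)/p$) and a piece proportional to $L^2 C^t$ (absorbed by the $\rho^2$-shrinkage in the $C^{t+1}$ recursion via the weight $\beta = 8L\rho^2 n\eta^2$). The restriction $\eta \leq p/(2\mu)$ forces $p \geq 2\mu\eta$, so the $(1-p)$ contraction driving both $U^t$ and $V^t$ is at least as strong as $(1-\mu\eta)$; the restriction $\eta \leq 1/(20L)$ provides the headroom needed so that the residual $\BE\Norm{\bs^t}^2$ terms can be dominated by the gradient-size quantity recovered from $\nabla f(\bx^t)$ through PL. Verifying these absorptions hold simultaneously for all four components of $\Phi^t$ is the technical heart of the argument.
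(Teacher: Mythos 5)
Your proposal follows essentially the same route as the paper: the identical Lyapunov function $\Phi^t=\BE[f(\bx^t)-f^*]+\alpha U^t+\beta V^t+LC^t$ with the same weights, a one-step contraction $\Phi^{t+1}\leq(1-\mu\eta)\Phi^t$ assembled from the descent lemma at $\bx^t$ plus recursions for $U^t,V^t,C^t$, and the same final step converting $\Phi^T\leq\epsilon$ into a bound on the uniformly sampled output via $L$-smoothness and the consensus term. The only caveat worth noting is that your recursions for $U^{t+1}$ and $V^{t+1}$ involve $\sum_i\Norm{\nabla f_i(\vx^{t+1}(i))-\nabla f_i(\vx^t(i))}^2$ with \emph{different} arguments per agent, so mean-squared smoothness does not apply directly; one must route through $\bx^t,\bx^{t+1}$ using the $\sqrt{n}L$-smoothness of each $f_i$, which is precisely how the paper obtains the split into the $C^t$ and $\BE\Norm{\bx^{t+1}-\bx^t}^2$ pieces that your bookkeeping paragraph already anticipates.
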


\begin{corollary}\label{cor:decentralized-upper}
Under the setting of Theorem \ref{thm:com}, running DRONE (Algorithm \ref{alg:DRONE}) by specifically taking
\begin{align*}
\begin{split}    
&p=\frac{1}{\min\{\sqrt{n},\kappa\}+1},~~ b=\left\lceil\frac{1-p}{p}\right\rceil,~~ \eta=\min\left\{\frac{1}{20L},\frac{p}{2\mu}\right\},   \\
&~q=\frac{1}{n},~~
K=\left\lceil\frac{\sqrt{2}\,(4+\log n)}{(\sqrt{2}-1)\sqrt{\gamma}}\right\rceil ~~ \text{and} ~~ T =\left\lceil\frac{1}{\mu\eta}\log\frac{\Phi^0}{\epsilon}\right\rceil
\end{split}
\end{align*}
achieves output satisfying $\BE[f(\vx^{\rm out})-f^*] \leq \epsilon$ within communication complexity of 
$\tilde\fO\big(\kappa/\sqrt{\gamma}\log(1/\epsilon)\big)$, time complexity of $\tilde\fO\big(\kappa (1+\tau/\sqrt{\gamma}\,)\log(1/\epsilon)\big)$ and LFO complexity of $\fO\big((n+\kappa\sqrt{n}\,)\log(1/\epsilon)\big)$ in expectation.
\end{corollary}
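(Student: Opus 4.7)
The plan is straightforward: apply Theorem \ref{thm:com} with the specified parameters to obtain the convergence guarantee $\BE[f(\vx^{\rm out}) - f^*] \leq \epsilon$, and then account separately for the communication, time and LFO costs. The steps are: (i) verify the constraints of Theorem \ref{thm:com}, (ii) evaluate $T$ under this parameter choice, and (iii) translate the $T$ outer iterations into the three complexity measures using the per-iteration costs of $\FM$ and of the random local-gradient update.

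First I would verify the parameter constraints. The choice $p = 1/(\min\{\sqrt{n},\kappa\}+1)$ lies in $[1/(n+1), 1/2]$ for $n \geq 1$, and $b = \lceil (1-p)/p \rceil$ automatically lies in $[(1-p)/p, n]$ since $1/p \leq n+1$. The delicate check is the stepsize. I claim $1/(20L) \leq p/(2\mu)$, equivalently $1/p \leq 10\kappa$, in both regimes: if $\kappa \leq \sqrt{n}$ then $1/p = \kappa+1 \leq 10\kappa$, and if $\kappa \geq \sqrt{n}$ then $1/p = \sqrt{n}+1 \leq \kappa+1 \leq 10\kappa$ (using $\kappa \geq 1$). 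Consequently $\eta = 1/(20L)$, and the stated iteration count gives $T = \fO(\kappa \log(\Phi^0/\epsilon))$; since $\Phi^0$ is polynomial in the problem parameters, $\log \Phi^0$ contributes only logarithmic factors absorbed by $\tilde{\fO}$.

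Next I would count the per-iteration cost on the communication and time axes. Each outer iteration invokes $\FM$ twice with $K = \fO(\log(n)/\sqrt{\gamma})$ gossip rounds, so one iteration takes $\fO(K) = \fO(\log(n)/\sqrt{\gamma})$ communication rounds and $\fO(1 + \tau K)$ units of time. Multiplying by $T = \fO(\kappa \log(1/\epsilon))$ yields total communication $\tilde{\fO}(\kappa/\sqrt{\gamma}\,\log(1/\epsilon))$ and total time $\tilde{\fO}(\kappa(1 + \tau/\sqrt{\gamma}\,)\log(1/\epsilon))$, matching the two announced bounds.

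The LFO count is the only step that uses the random batching non-trivially, and this is the genuine obstacle. When $\zeta^t = 1$ each of the $n$ agents performs one local gradient evaluation; when $\zeta^t = 0$, conditional on this event the expected number of agents with $\xi_i^t > 0$ is at most $\sum_{i=1}^n \BE[\xi_i^t] = nbq = b$, and each such agent incurs an $\fO(1)$ number of LFO calls. Hence the expected per-iteration LFO cost is $\fO(pn + (1-p)b) = \fO(pn + 1/p)$. With $p = 1/(\min\{\sqrt{n},\kappa\}+1)$ this is $\fO(n/\kappa + \kappa)$ when $\kappa \leq \sqrt{n}$ and $\fO(\sqrt{n})$ when $\kappa \geq \sqrt{n}$; multiplying by $T = \fO(\kappa \log(1/\epsilon))$ gives $\fO((n+\kappa^2)\log(1/\epsilon))$ in the former regime (dominated by $\fO((n+\kappa\sqrt{n})\log(1/\epsilon))$ since $\kappa^2 \leq \kappa\sqrt{n}$ there) and $\fO(\kappa\sqrt{n}\log(1/\epsilon))$ in the latter. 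The point of the proof, and the source of the specific rule $p = 1/(\min\{\sqrt{n},\kappa\}+1)$, is that $p$ must be simultaneously small enough that $pn$ stays below $\sqrt{n}$ and large enough that $1/p \leq 10\kappa$ so that the $\eta \leq p/(2\mu)$ constraint never forces a smaller stepsize; the chosen scaling tightens both inequalities at once.
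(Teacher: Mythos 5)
Your proposal is correct and follows essentially the same route as the paper's proof: invoke Theorem \ref{thm:com} after checking that the stated $p,b,\eta$ satisfy its constraints (so that $\eta=\Theta(1/L)$ and $T=\fO(\kappa\log(1/\epsilon))$), then multiply the per-iteration costs $\fO(K)$ communication, $\fO(1+K\tau)$ time, and $pn+(1-p)b$ expected LFO calls by $T$. Your write-up is merely more explicit than the paper's (which states the per-iteration LFO bound as $\fO(n/\min\{\kappa,\sqrt{n}\})$ rather than splitting into the two regimes), but the content is identical.
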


Note that the setting of $p=1/(\min\{\sqrt{n},\kappa\}+1)$ leads to $b=\fO(\min\{\sqrt{n},\kappa\})$, $\eta=\Theta\left({1}/{L}\right)$
and $T=\Theta(\kappa\log({1}/{\epsilon}))$, 
guarantees the algorithm nearly match the LFO lower bound (Corollary \ref{cor:decentralized-lfo}) in both cases of $n=\fO(\kappa^2)$ and $n=\Omega(\kappa^2)$.

As a comparison, the analysis of PAGE \cite{li2021page} (for single machine optimization) takes $p=\Theta(1/\sqrt{n}\,)$, which leads to that $b=\Theta(\sqrt{n}\,)$, $\eta=\Theta(\min\{1/L,1/(\mu\sqrt{n}\,)\})$ and $T=\Theta((\kappa+\sqrt{n}\,)\log(1/\epsilon))$.
If we directly apply these parameters to DRONE, it will result LFO complexity of $\fO\big((n+\kappa\sqrt{n}\,)\log(1/\epsilon)\big)$, 
communication complexity of 
$\tilde\fO\big((\kappa+\sqrt{n}\,)\log(1/\epsilon)/\sqrt{\gamma}\,\big)$ and time complexity of $\tilde\fO\big(({\kappa+\sqrt{n}\,)} (1+\tau/\sqrt{\gamma}\,)\log(1/\epsilon)\big)$ in expectation.
In the case of $n=\Omega(\kappa^2)$, such communication complexity and time complexity do not match the corresponding lower bounds (Theorem \ref{thm:decentralized-lower}). 
Intuitively, our analysis for DRONE considers the larger stepsize $\eta=\Theta(1/L)\geq\Theta(1/(\mu\sqrt{n}\,))$ than PAGE when $n=\Omega(\kappa^2)$, which is important to reduce the iteration numbers $T=\lceil(1/\mu\eta)\log(\Phi^0/\epsilon)\rceil$, also reduce the overall communication rounds $KT$ and the overall time cost $(1+K\tau)T$.

\section{Numerical Experiments}

We conduct numerical experiments to compare DRONE with  centralized gradient descent (CGD) and DGD-GT, where
CGD is a distributed extension of GD in client-server network. Please see Appendix \ref{appendix:experiments} for details.

We test the algorithms on the following three problems:
\begin{itemize}
    \item Hard instance: We follow the instance in the proof of Theorem \ref{thm:decentralized-lower} (Appendix \ref{appendix:instance-exp}) and specifically let 
\begin{align*}
   f_i(\vx)=\frac{16}{3}h_i^\fC\big(\sqrt{12a}\vx\big)
\end{align*}
for formulation (\ref{main:prob}). We set $T=2$, $t=72$, $\fC=\left\{1\right\}$ and $\sigma=29$ for $h_i^\fC:\BR^{Tt}\to\BR$.
\item Linear regression: We consider the problem 
\begin{align}\label{exp:obj}
\begin{split}    
\min_{\vx\in\BR^d} f(\vx)=\frac{1}{m}\sum_{j=1}^m \ell_j(\vx)
~\text{with}~ \ell_j(\vx) = (\va_j^\top\vx-b_j)^2
\end{split}
\end{align}
where $\va_j\in\BR^{d}$ is the feature vector of the $j$-th sample and $b_j\in\BR$ is its label. We allocate the $m$ individual loss on the $n$ agents, which leads to 
\begin{align}\label{exp:local}
   f_i(\vx)=\frac{n}{m}\sum_{j=\lceil m/n\rceil (i-1)+1}^{\min\{\lceil m/n\rceil i,m\}}(\va_j^\top\vx-b_j)^2.
\end{align}
We evaluate the algorithms on dataset ``DrivFace'' ($m=606$, $d=921,600$) \cite{diaz2016reduced} for this problem.
\item Logistic regression: The objective function and local functions of this problem are similar to the counterparts in formulation (\ref{exp:obj})--(\ref{exp:local}), but replace the loss function by
\begin{align*}
    \hat\ell_j(\vx) = \log\big( 1 + \exp(-b_{j}\va_{j}^\top \vx)\big).
\end{align*}
and require $b_j\in\{1,-1\}$.
We evaluate the algorithms on dataset ``RCV1'' ($m=20,242$, $d=47,236$) \cite{diaz2016reduced} for this problem.
\end{itemize}

For all above problems, we set $n=32$ and use linear graph for  network of DGD-GT and DRONE, leading to spectral gap  
$\gamma=\left(1-\cos({\pi}/{32})\right)/\left(1+\cos({\pi}/{32})\right)\approx 0.0024$.

We present empirical results for CGD, DGD-GT and DRONE on problems of hard instance, linear regression and logistic regression in Figure \ref{fig:lower bound}, \ref{fig:Linear regression} and \ref{fig:Logistic regression}, which includes the comparisons on LFO calls, communication rounds and running time.

We can observe that DRONE requires significantly less LFO calls, since we have showed only DRONE matches the lower complexity bound on LFO calls.
We also observed CGD needs much less communication rounds than DGD-GT and DRONE, which also leads to less running time.
This is because of the client-server framework in CGD does not suffers from the consensus error which cannot be avoided in  decentralized optimization.
It also validate our theoretical analysis that the linear graph heavily affect the convergence rate of decentralized algorithms.
Additionally, DGD-GT and DRONE have comparable communication rounds and running time for all of these problems, which also supports our theoretical results (see Table \ref{table:distributed}).

\begin{figure*}[t!]
	\centering
	\begin{minipage}{0.33\textwidth}
		\centering
		\includegraphics[width=\linewidth]{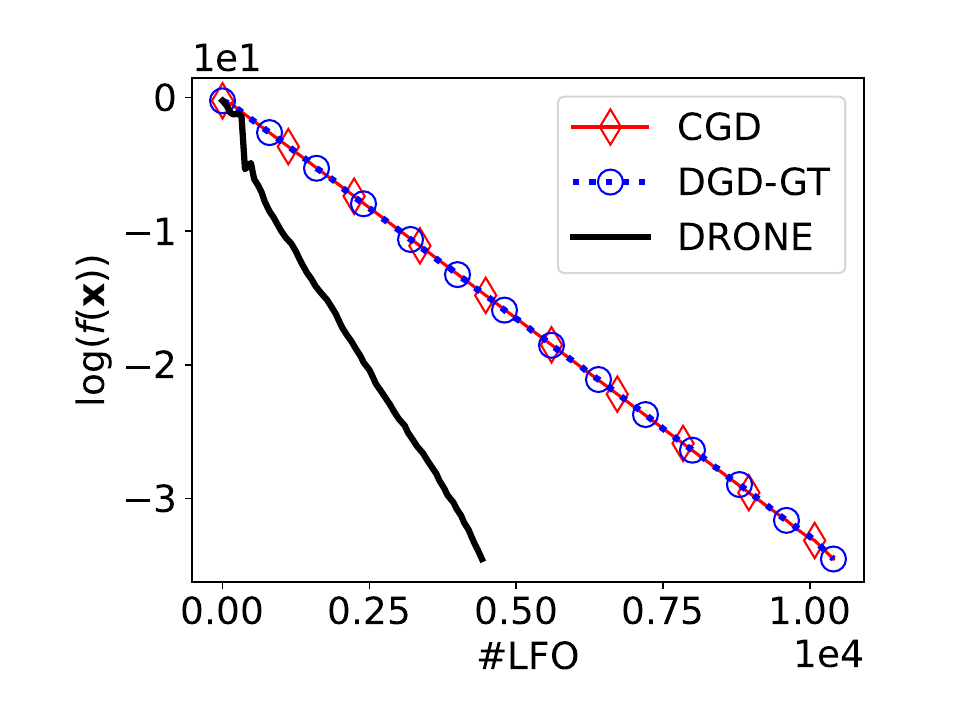} 
	\end{minipage}
	\begin{minipage}{0.33\textwidth}
		\centering
		\includegraphics[width=\linewidth]{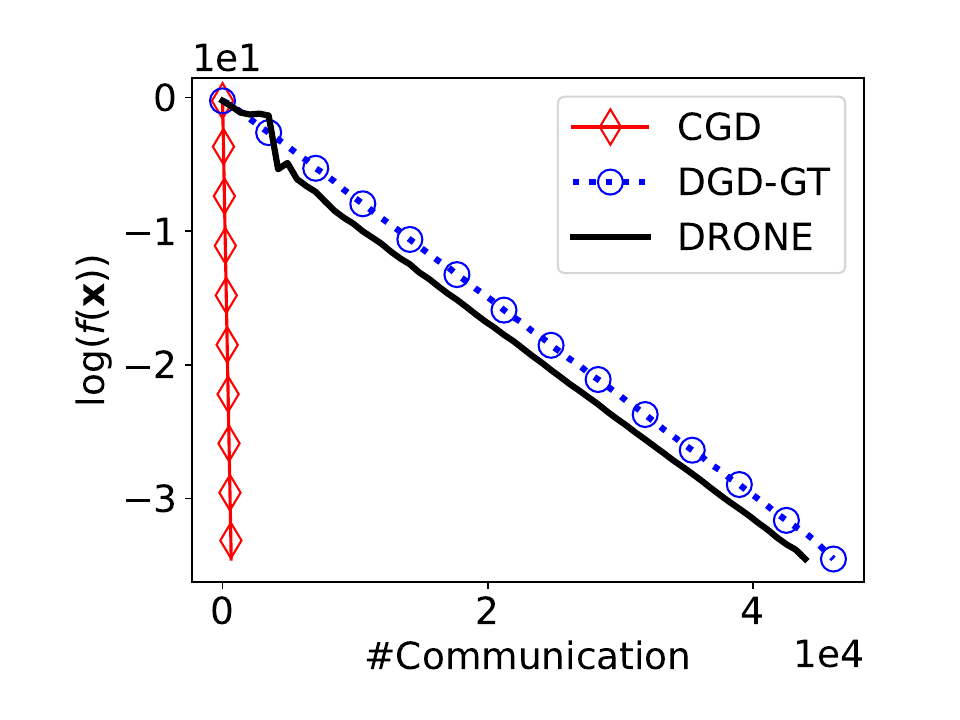} 
	\end{minipage}
	\begin{minipage}{0.33\textwidth}
		\centering
		\includegraphics[width=\linewidth]{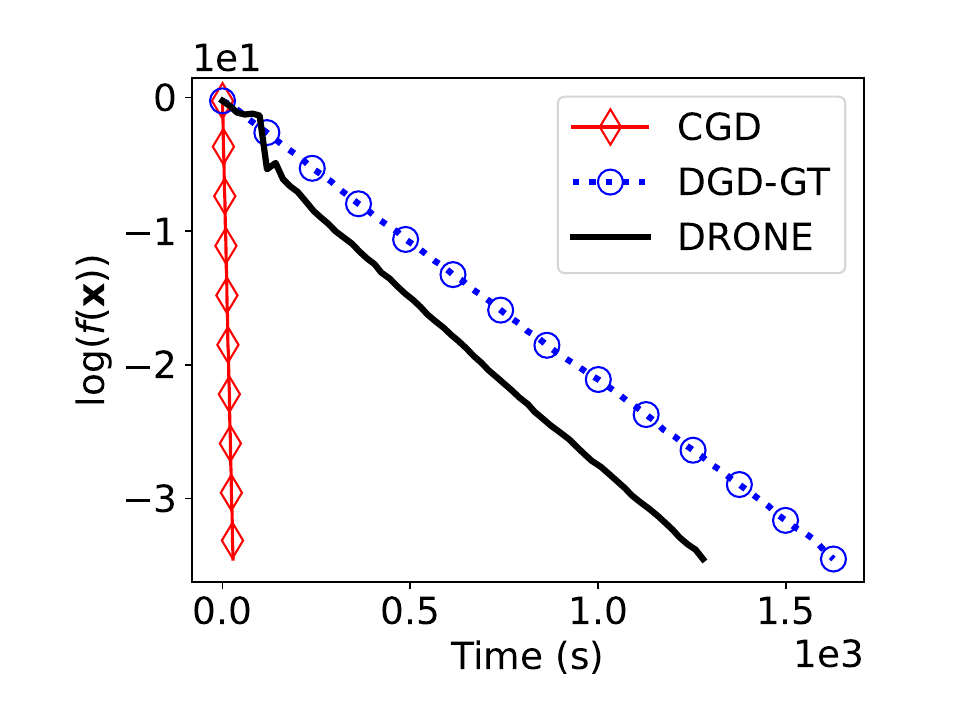}
	\end{minipage}
	\vspace{-0.4cm}
	\caption{The results for the hard instance in the proof of Theorem \ref{thm:decentralized-lower}.}
	\label{fig:lower bound} \vskip-0.25cm
\end{figure*}

\begin{figure*}[t!]
	\centering
	\begin{minipage}{0.33\textwidth}
		\centering
		\includegraphics[width=\linewidth]{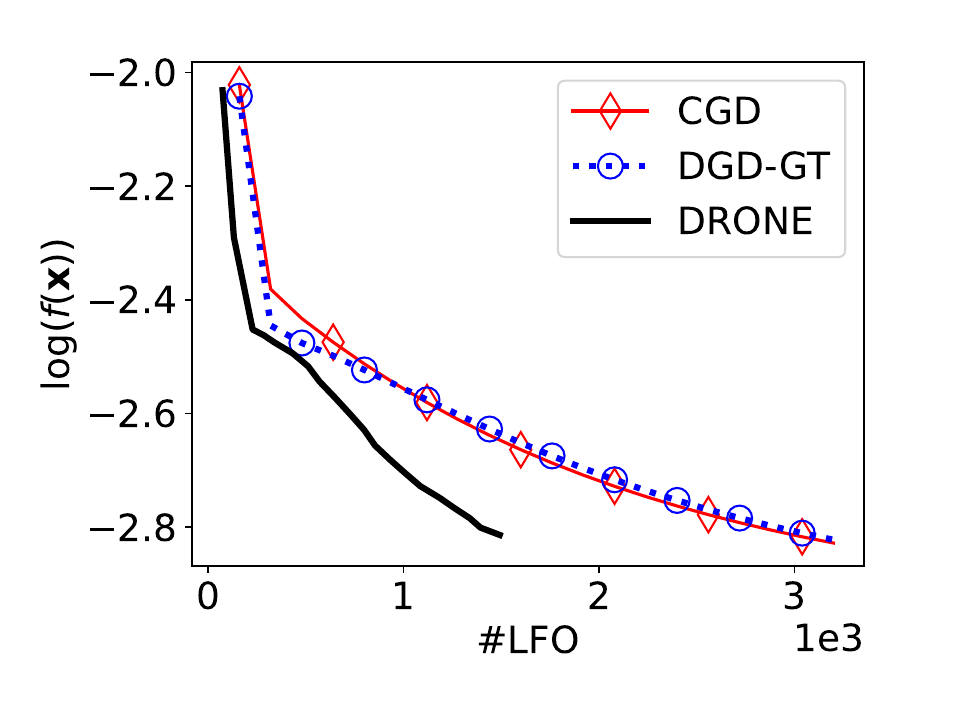}
	\end{minipage}
	\begin{minipage}{0.33\textwidth}
		\centering
		\includegraphics[width=\linewidth]{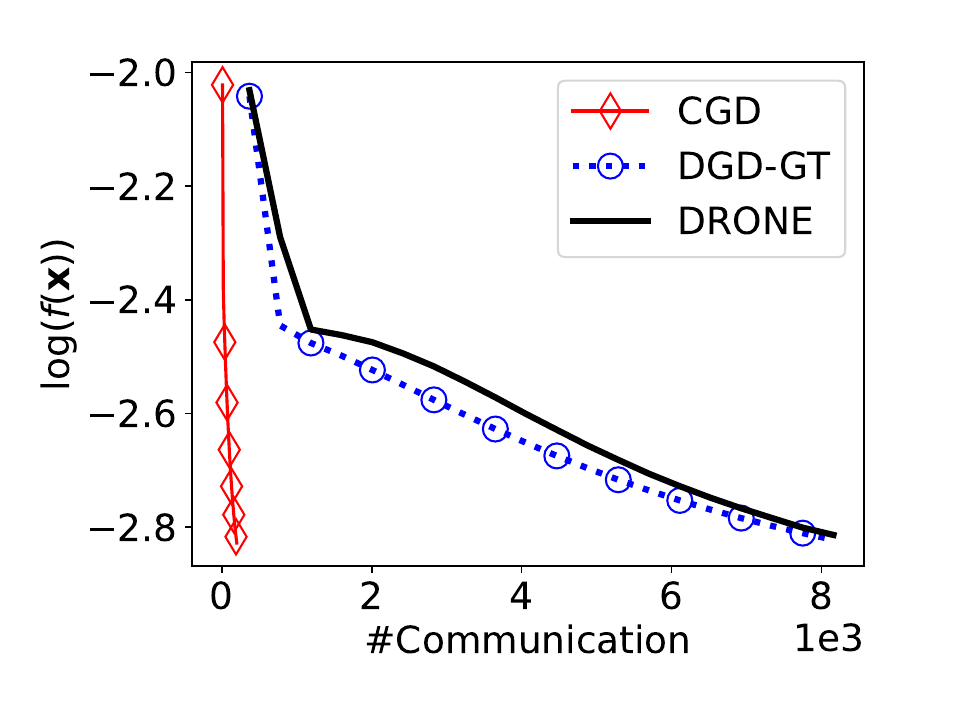} 
	\end{minipage}
	\begin{minipage}{0.33\textwidth}
		\centering
		\includegraphics[width=\linewidth]{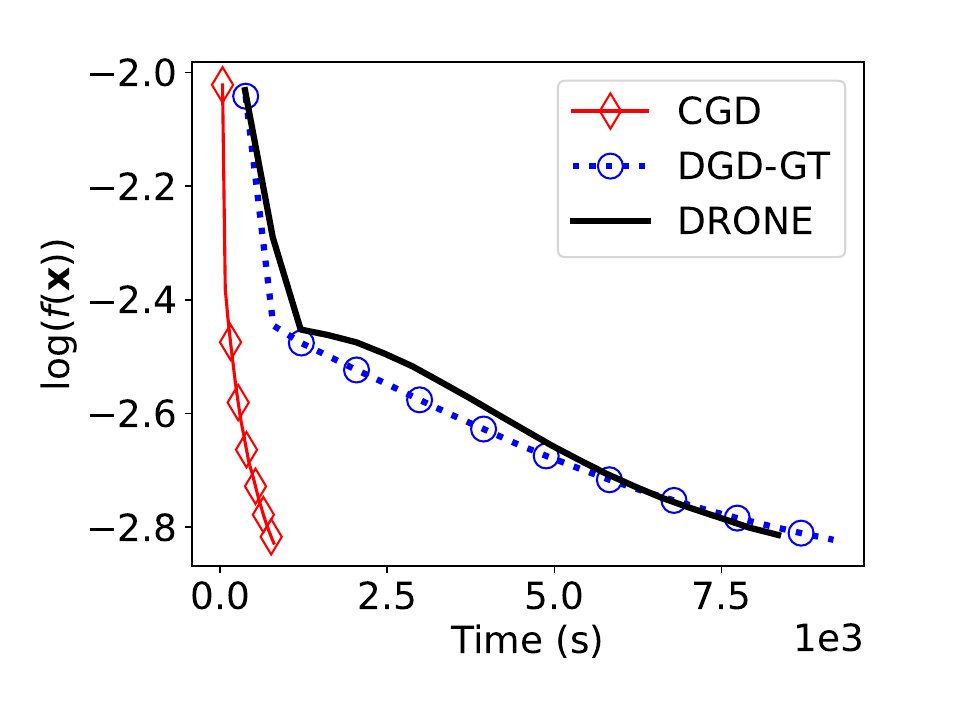}
	\end{minipage}
	\vspace{-0.4cm}
	\caption{The results for linear regression on dataset  ``DrivFace''.}
	\label{fig:Linear regression} \vskip-0.25cm
\end{figure*}

\begin{figure*}[t!]
	\centering
	\begin{minipage}{0.33\textwidth}
		\centering
		\includegraphics[width=\linewidth]{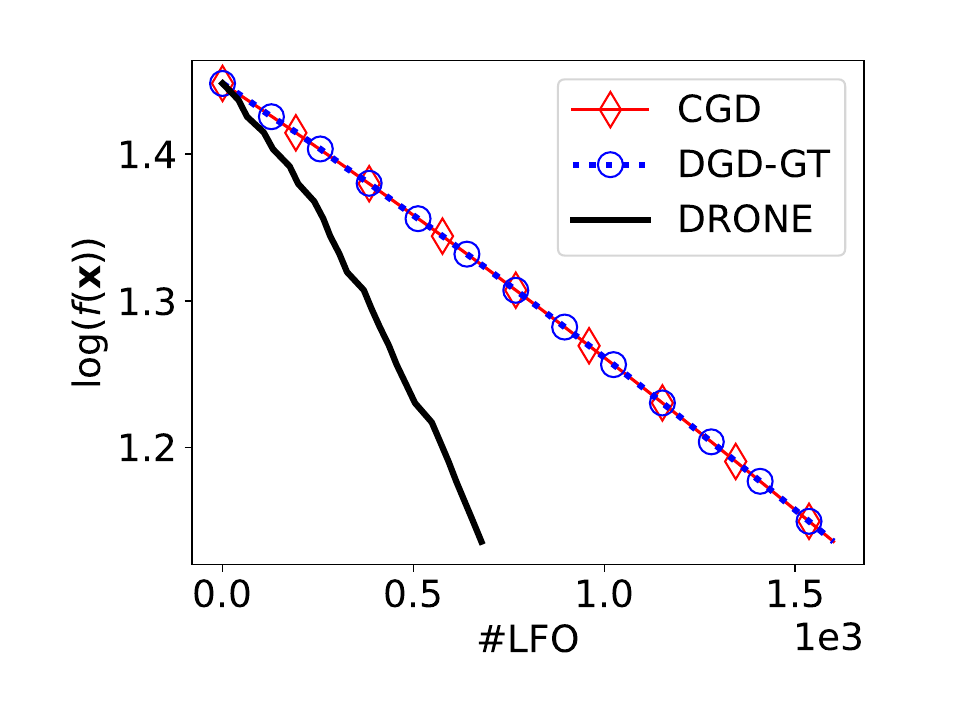}
	\end{minipage}
	\begin{minipage}{0.33\textwidth}
		\centering
		\includegraphics[width=\linewidth]{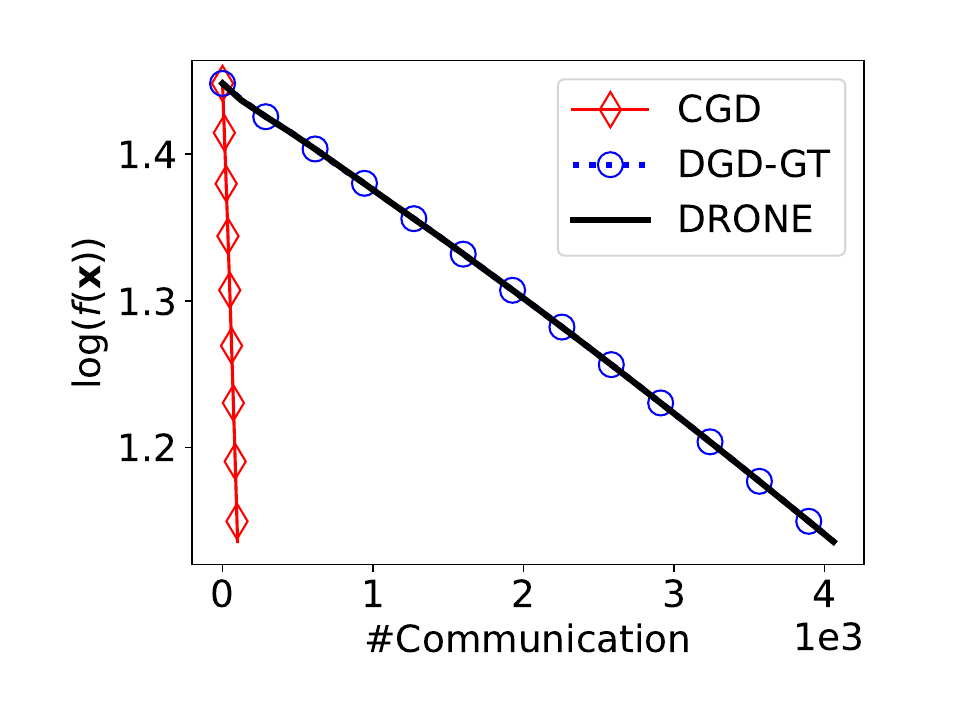}
	\end{minipage}
	\begin{minipage}{0.33\textwidth}
		\centering
		\includegraphics[width=\linewidth]{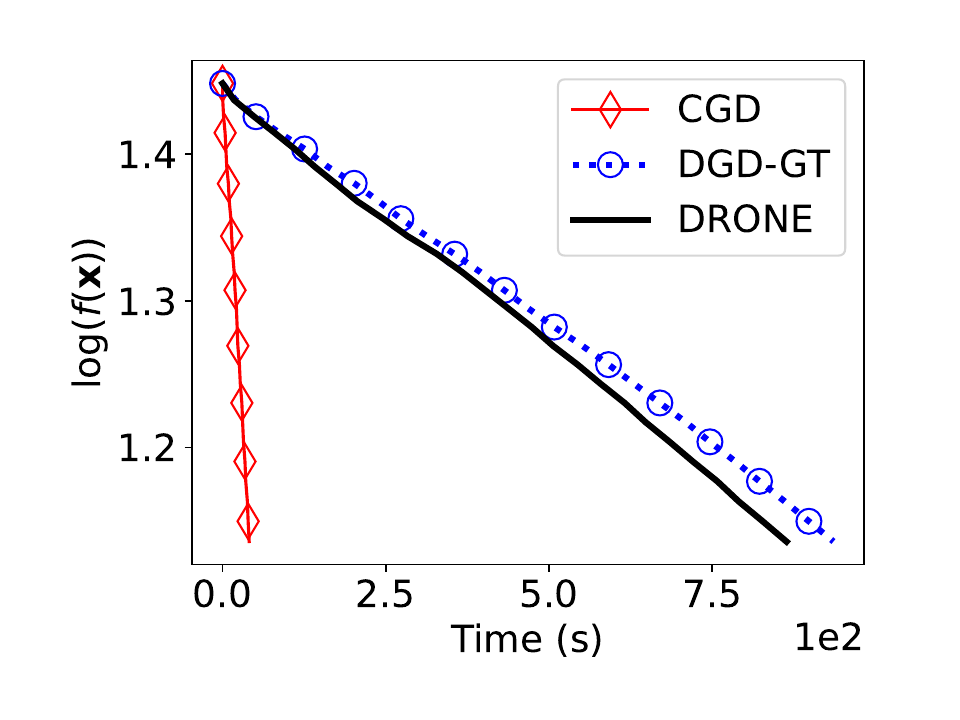}
	\end{minipage}
	\vspace{-0.4cm}
	\caption{The results for logistic regression on dataset  ``RCV1''.}
	\label{fig:Logistic regression}
\end{figure*}

\section{Conclusion}

We provide the lower complexity bound for smooth finite-sum optimization under the PL condition, 
which implies the upper bound of IFO complexity archived by existing first-order methods~\cite{wang2019spiderboost,li2021page,zhou2019faster} is nearly tight.
We also construct the lower bounds of communication complexity, time complexity and LFO complexity for minimizing the PL function in distributed setting and verify their tightness by proposing decentralized recursive local gradient descent.

In future work, we would like to study the lower bound in more general stochastic setting that the objective (or local functions) has the form of expectation~\cite{yuan2022revisiting,ACDFSW2019}. 
We are also interested in extending our results to address the functions that satisfy the Kurdyka--{\L}ojasiewicz inequality \cite{bolte2014proximal,bolte2007lojasiewicz,attouch2009convergence,zhou2018convergence,fatkhullin2022sharp,jiang2022holderian}.

\appendix

\section{The Proofs for Section \ref{sec:IFO}}\label{appendix:IFO}

Without the loss of generality, we always assume the IFO algorithm iterates with initial point $\vx^{(0)}=\vzero$. Otherwise, we can take functions $\{f_i(\vx+\vx^{(0)}\}_{i=1}^n$ into consideration.

Recall that we have defined the functions $\psi_\theta:\BR\to\BR$, $q_{T,t}:\BR^{Tt}\to\BR$ and $g_{T,t}:\BR^{Tt}\to\BR$ as  \cite{yue2023lower}
\begin{align}\label{eq:psi}
\psi_{\theta}(x)= 
\begin{cases}
\frac{1}{2}x^{2}, 
	& x\leq\frac{31}{32}\theta, \\[0.5em]
			\frac{1}{2}x^{2}-16(x-\frac{31}{32}\theta)^{2},& {\frac{31}{32}\theta<x\leq \theta,}\\[0.5em]
	\frac{1}{2}x^{2}-\frac{1}{32}\theta^2+16(x-\frac{33}{32}\theta)^{2}, &      {\theta<x\leq \frac{33}{32}\theta,}\\[0.5em]
	\frac{1}{2}x^{2}-\frac{1}{32}\theta^2,      
	& {x>\frac{33}{32}\theta,}
\end{cases} 
\end{align} 
\begin{align*}
		q_{T,t}(\vx)=\frac{1}{2}\sum_{i=0}^{t-1}\left(\left(\frac{7}{8}x_{iT}-x_{iT+1}\right)^2+\sum_{j=1}^{T-1}(x_{iT+j+1}-x_{iT+j})^2\right),
\end{align*}
and 
\begin{align}\label{eq:g}
		g_{T,t}(\vx)=q_{T,t}(\vb-\vx)+\sum_{i=1}^{Tt}\psi_{b_i}(b_i-x_i).
\end{align} 
where $x_0=0$ and $\vb\in \mathbb{R}^{Tt}$ with $b_{kT+\tau}=({7}/{8})^k$ for $k\in \{0\}\cup[t-1]$ and $\tau\in[T]$.
We can verify that 
\begin{align*}
g_{T,t}^*\triangleq \inf_{\vy\in\BR^{Tt}}g_{T,t}(\vy)=0.    
\end{align*}

\subsection{The Proof of Lemma \ref{lem:finite-sum}}
\begin{proof}
For any $\vx,\vy\in\BR^{mn}$, the smoothness of $g:\BR^m\to\BR$ implies
\begin{align*}
\Vert\nabla f_i(\vx)-\nabla f_i(\vy)\Vert 
= & \Big\Vert(\mU^{(i)})^\top\nabla g(U^{(i)}\vx)-(\mU^{(i)})^\top\nabla g(U^{(i)}\vy)\Big\Vert \\
= & \big\Vert\nabla g(\mU^{(i)}\vx)-\nabla g(\mU^{(i)}\vy)\big\Vert \\
\leq & \hat{L}\big\Vert \mU^{(i)}(\vx-\vy)\big\Vert \\
\leq & \hat{L}\Vert \vx-\vy\Vert,
\end{align*}
and
\begin{align*}
 \frac{1}{n}\sum_{i=1}^n\Vert\nabla f_i(\vx)-\nabla f_i(\vy)\Vert^2 
=& \frac{1}{n}\sum_{i=1}^{n}\Big\Vert(\mU^{(i)})^\top\nabla g(\mU^{(i)}\vx)-(\mU^{(i)})^\top\nabla g(\mU^{(i)}\vy)\Big\Vert^2 \\
=& \frac{1}{n}\sum_{i=1}^{n}\big\Vert\nabla g(\mU^{(i)}\vx)-\nabla g(\mU^{(i)}\vy)\big\Vert^2 \\
=& \frac{\hat{L}^2}{n}\sum_{i=1}^{n}\big\Vert \mU^{(i)}(\vx-\vy)\big\Vert^2 \\
\leq & \frac{\hat{L}^2}{n}\Vert \vx-\vy\Vert^2.
\end{align*}
This implies each $f_i:\BR^{mn}\to\BR$ is $\hat{L}$-smooth and $\{f_i:\BR^{mn}\to\BR\}_{i=1}^n$ is $\hat{L}/\sqrt{n}$-average smooth.

For any $\vx\in\BR^{mn}$, the PL condition of $g:\BR^m\to\BR$ implies
\begin{align*}
	\Vert\nabla f(\vx)\Vert^2& = \frac{1}{n^{2}}\Big\Vert\sum_{i=1}^{n}\nabla f_i(\vx)\Big\Vert^2 \\
	& = \frac{1}{n^{2}}\Big\Vert\sum_{i=1}^{n}(\mU^{(i)})^\top\nabla g(\mU^{(i)}\vx)\Big\Vert^2 \\
	& = \frac{1}{n^{2}}\sum_{i=1}^{n}\big\Vert\nabla g(\mU^{(i)}\vx)\big\Vert^2\\
	& \geq \frac{\hat{\mu}}{n^2}\sum_{i=1}^{n}\Big(g(\mU^{(i)}\vx)-\inf\limits_{\vx\in \mathbb{R}^{mn}}g(\mU^{(i)}\vx)\Big)\\
	& = \frac{\hat{\mu}}{n}(f(\vx)-f^*),
\end{align*}
which means $f:\BR^{mn}\to\BR$ is $\hat{\mu}/n$-PL.
  
Consider the facts $f(\vzero)=g(\vzero)$ and
\begin{align*}
      \inf\limits_{\vx\in \mathbb{R}^{mn}}\sum_{i=1}^{n}g(\mU^{(i)}\vx)=\sum_{i=1}^{n}\inf\limits_{\vx\in \mathbb{R}^{mn}}g(\mU^{(i)}\vx)=n\inf\limits_{\vx\in \mathbb{R}^{m}}g(\vx),
\end{align*}
then we have
\begin{align*}
     f(\mathbf{0})-\inf_{\vy\in\BR}f(\vy)=g(\mathbf{0})-\inf_{\vy\in\BR}g(\vy).
\end{align*}
\end{proof}

\subsection{The Proof of Lemma \ref{lem:scale}}

\begin{proof}
     For any $\vx,\vy\in\BR^{mn}$, the smoothness of $g:\BR^m\to\BR$ implies
     \begin{align*}
             \Norm{\nabla\hat g(\vx)-\nabla\hat g(\vy)}&=\alpha\beta\Norm{\nabla g(\beta \vx)-\nabla g(\beta \vy)}\\
             &\leq\alpha\beta \hat{L}\Norm{\beta \vx-\beta \vy}\\
             &\leq\alpha\beta^2 \hat{L}\Norm{\vx-\vy},
     \end{align*}
     which means $\hat g$ is $\alpha\beta^2 \hat{L}$-smooth.
     
     For any $\vx\in\BR^{mn}$, the PL condition of $g:\BR^m\to\BR$ implies
     \begin{align*}
             \Norm{\nabla\hat g(\vx)}^2
             =&\alpha^2\beta^2\Norm{\nabla g(\beta\vx)}^2\\
             \geq&2\alpha^2\beta^2\hat{\mu}(g(\beta\vx)-g^*)\\
             =&2\alpha\beta^2\hat{\mu}(\hat g(\vx)-\hat g^*),
     \end{align*}
     which means $\hat g$ is $\alpha\beta^2\hat{\mu}$-PL.

     We can verify that $\hat g(\mathbf{0})=\alpha g(\mathbf{0})$ and $\hat g^*=\alpha g^*$, which means
     \begin{align*}
         \hat g(\mathbf{0})-\hat g^*=\alpha (g(\mathbf{0})-g^*).
     \end{align*}
 \end{proof}

\subsection{The Proof of Theorem  \ref{thm:kn}}\label{sec:thm:kn}
 
\begin{proof}
We first take $g_{T,t}:\BR^{Tt}\to\BR$ by following equation (\ref{eq:g}) with 
\begin{align*}
T=\left\lfloor \frac{L}{37a\sqrt{n}\mu}\right\rfloor \qquad\text{and}\qquad t=2\left\lfloor\log_{{7}/{8}}\frac{3\epsilon}{\Delta}\right\rfloor.    
\end{align*}
The statements (b), (c) and~(d) of Lemma \ref{lem:basic function} means the function $g_{T,t}$ is $37$-smooth, $1/(aT)$-PL and satisfies
\begin{align*}
g_{T,t}(\vzero)-g_{T,t}^*\leq3T.    
\end{align*}
We apply Lemma \ref{lem:scale} with 
\begin{align*}
g(\vx)=g_{T,t}(\vx), \quad m=Tt, \quad \alpha=\frac{\Delta}{3T} \quad \text{and} \quad \beta=\sqrt{\frac{3\sqrt{n}LT}{37\Delta}},     
\end{align*}
which means the function $\hat g(\vx)=\alpha g_{T,t}(\beta\vx)$ is $37\alpha\beta^2$-smooth, $\alpha\beta^2/(aT)$-PL and satisfies 
\begin{align*}
\alpha g_{T,t}(\mathbf{0})-\alpha g_{T,t}^*\leq3\alpha T. 	    
\end{align*}
Then we apply Lemma \ref{lem:finite-sum} with 
\begin{align*}
g(\vx)=\hat g(\vx)=\alpha g_{T,t}(\beta\vx) \qquad\text{and}\qquad m=Tt,    
\end{align*}
which achieves $f_i(\vx)=\alpha g_{T,t}(\beta \mU^{(i)}\vx)$ and $f(\vx)=\frac{1}{n}\sum_{i=1}^{n}f_i(\vx)$ such that $\left\{f_i:\BR^{nTt}\to\BR\right\}_{i=1}^n$ is $37\alpha\beta^2/\sqrt{n}$-mean-squared smooth and $f:nTt$ is $\alpha\beta^2/(anT)$-PL with 
\begin{align*}
f(\vx^0)-f^*=\alpha g_{T,t}(\mathbf{0})-\alpha g_{T,t}^*\leq3\alpha T.    
\end{align*}
The choice of $\alpha=\Delta/(3T)$ and $\beta=\sqrt{3\sqrt{n}LT/(37\Delta)}$ and condition $L\geq37a\sqrt{n}\mu$ implies
\begin{align*}
	\frac{37\alpha\beta^2}{\sqrt{n}}=L, \qquad  \frac{\alpha\beta^2}{anT}\geq\mu \qquad\text{and}\qquad  3\alpha T=\Delta.
\end{align*}
Therefore, the function set 
$\left\{f_i\right\}_{i=1}^n$ is $L$-average smooth and the function $f$ is $\mu$-PL with $f(\vx^0)-f^*\leq\Delta$.

Let $\delta=2\epsilon/\Delta$, then we can write $t=2\lfloor\log_{{8}/{7}}{2}/{(3\delta)}\rfloor$.
Moreover, the assumption $\epsilon<0.005\Delta$ means
$\delta<0.01$.
Then the statement (e) of Lemma \ref{lem:basic function} and definition $f_i(\vx)=\alpha g_{T,t}(\beta \mU^{(i)}\vx)$ implies if $\vx\in\BR^{nTt}$ satisfies 
\begin{align*}
{\rm supp}(\mU^{(i)}\vx)\subseteq\left\{1,2,\cdots,Tt/2\right\},    
\end{align*}
then
\begin{align*}
   f_i(\vx)- \alpha g_{T,t}^* & = \alpha g_{T,t}(\beta U^{(i)}\vx)-\alpha g_{T,t}^*\\
&>3\alpha T\delta\\
&=2\epsilon.
\end{align*}

Now we show that any IFO algorithm require at least 
$\lfloor nTt/4\rfloor+1=\Omega(\kappa\sqrt{n}\log(1/\epsilon))$ IFO calls to achieve an $\epsilon$-suboptimal solution $\hat\vx$ of the problem.
We consider the vector $\vx\in\BR^{nTt}$ achieved by an IFO algorithm with at most $\lfloor nTt/4\rfloor$ IFO calls.
The zero-chain property of $g_{Tt}$ (statement (a) of Lemma \ref{lem:basic function}) means the vector $\vx$ has at most $\lfloor nTt/4 \rfloor$ non-zero entries.
We partition $\vx\in\BR^{nTt}$ into $n$ vectors $\vy^{(1)},\dots,\vy^{(n)}\in\BR^{Tt}$ such that $\vy^{(i)}=\mU^{(i)}\vx\in\BR^{Tt}$.
Then there at least $\lceil n/2\rceil$ vectors in $\{\vy^{(i)}\}_{i=1}^n$ such that each of them has at least $Tt/2$ zero entries.
The zero-chain property means
there exists index set $\fI\subseteq [n]$ with $|\fI|\geq \lceil n/2\rceil$ such that each~$i\in\fI$ satisfies $y^{(i)}_{Tt/2+1}=\dots=y^{(i)}_{Tt}=0$.
Therefor, the statement (e) of Lemma \ref{lem:basic function} implies
\begin{align*}
f_i(\vx)- \alpha g_{T,t}^* & > 2\epsilon,
\end{align*}
which leads to
\begin{align*}
\frac{1}{n}\sum_{i=1}^n f_i(\vx)- f^*
= &   \frac{1}{n}\sum_{i=1}^n f_i(\vx)- \alpha g_{T,t}^* \\
\geq &  \frac{1}{n}\sum_{i\in\fI} (f_i(\vx)- \alpha g_{T,t}^*) \\
>& \frac{1}{n}\cdot \lceil \frac{n}{2} \rceil \cdot 2\epsilon \\
\geq& \epsilon.
\end{align*}
Hence, finding an $\epsilon$-suboptimal solution of the problem requires at least $\lfloor nTt/4\rfloor+1=\Omega(\kappa\sqrt{n}\log(1/\epsilon))$ IFO calls.
\end{proof}

\subsection{The Proof of Theorem \ref{thm:n}}\label{sec:thm:n}
\begin{proof}
We prove this theorem by following \citet[Theorem 2]{li2021page}.
For any $i\in[n]$, we define $f_i:\BR^d\to\BR$ as 
\begin{align*}
    f_i(\vx)=c\left< \mathbf{u}_i, \vx \right>+\frac{L}{2}\Vert \vx\Vert^2,
\end{align*}
where 
\begin{align*}
c=\sqrt{L\Delta}, \qquad
d=2n^2, \qquad
\mathbf{u}_i=\Big[\BI\Big(\Big\lceil\frac{1}{2n}\Big\rceil=i\Big),\BI\Big(\Big\lceil\frac{2}{2n}\Big\rceil=i\Big)\cdots,\BI\Big(\Big\lceil\frac{2n^2}{2n}\Big\rceil=i\Big)\Big]^\top\in\mathbb{R}^{d}    
\end{align*}
and $\BI(\cdot)$ is the indicator function.
	
For any $\vx,\vy\in\BR^d$, we have
\begin{align*}
    \nabla f_i(\vx) = c\vu_i + L\vx
\end{align*}
for any $i\in[n]$, which implies
\begin{align*}
	\frac{1}{n}\sum_{i=1}^n\Vert\nabla f_i(\vx)-\nabla f_i(\vy)\Vert^2&=\frac{1}{n}\sum_{i=1}^n\Vert(c\mathbf{u}_i+L\vx)-(c\mathbf{u}_i+L\vy)\Vert^2\\
&=\frac{1}{n}\sum_{i=1}^n\Vert L(\vx-\vy)\Vert^2\\
&=L^2\Vert \vx-\vy\Vert^2.
\end{align*}
Hence, we conclude $\{f_i:\BR^{d}\to\BR\}_{i=1}^n$ is $L$-mean-squared smooth.

We also have $\nabla^2f(\vx)=L\mI\succeq\mu\mI$ for any $\vx\in\BR^d$. Hence, the function $\nabla^2f(\vx)$ is $\mu$-strongly convex, also is $\mu$-PL.
  
We have
\begin{align*}
f^* 
=& \frac{1}{n}\sum_{i=1}^{n}\left(c\inner{\mathbf{u}_i}{\vx^*} +\frac{L}{2}\Vert \vx^*\Vert^2\right)\\
=& \frac{c}{n}\sum_{i=1}^{n} \inner{\mathbf{u}_i}{\vx^*}  +\frac{L}{2}\Vert \vx^*\Vert^2\\
=&-\frac{c^2}{2Ln^2}\bigg\Vert\sum_{i=1}^{n}\mathbf{u}_i\bigg\Vert^2\\
=& -\frac{c^2}{L},
\end{align*}
where $\vx^*=-({c}/{Ln})\vone$ is the minima of $f:\BR^d\to\BR$.
Then the optimal function value gap holds 
\begin{align*}
f(\vx^0)-f^* = 0 - f^* =\frac{c^2}{L} = \Delta.
\end{align*}

We consider any IFO algorithm with initial point $\vx^0=\vzero$.
After $t$ IFO calls, Definition \ref{def:linear-span} implies
\begin{align*}
    \vx^t \in {\rm Lin}\big(\{\nabla f_{i_0}(\vx^0),\dots,\nabla f_{i_{t-1}}(\vx^{t-1})\}\big) = {\rm Lin}\big(\{\vu_{i_0},\dots,\vu_{i_{t-1}}\}\big),
\end{align*}
where $i_\tau\in[n]$ is the index of individual which is accessed at the $\tau$-th IFO calls.
Since each $\vu_{i_\tau}$ has $2n$ nonzero entries, any vector $\vx\in\BR^d$ achieved by at most $n/2$ IFO calls has at least 
\begin{align*}
    d - \frac{n}{2}\cdot 2n = 2n^2 - n^2 = n^2
\end{align*}
zero entries.
Let $\fI_0=\{j\in[2n^2]:x_j=0\}$, then we have $|\fI|\geq n^2$.
Based on the construction of $f_i$ and $\vu_i$, we have
\begin{align*}
	f(\vx)-f^*
= & \frac{1}{n}\sum_{i=1}^{n} \left(c\inner{\vu_i}{\vx} + \frac{L}{2}\norm{\vx}\right) - \left(-\frac{c^2}{L}\right) \\
= & \sum_{j=1}^{2n^2}\left(\frac{c}{n}x_j+\frac{L}{2}x_j^2+\frac{c^2}{2Ln^2}\right) \\
= & \sum_{j\in\fI_0}\left(\frac{c}{n}x_j+\frac{L}{2}x_j^2+\frac{c^2}{2Ln^2}\right) + \sum_{j\not\in\fI_0}\left(\frac{c}{n}x_j+\frac{L}{2}x_j^2+\frac{c^2}{2Ln^2}\right) \\
\geq & n^2\cdot\frac{c^2}{2Ln^2} + \sum_{j\not\in\fI_0}\left(x_j+\frac{c}{nL}\right)^2 \\
\geq & \frac{\Delta}{2}>\epsilon,
\end{align*}
Hence, achieving an $\epsilon$-suboptimal solution requires at least $n/2+1=\Omega(n)$ IFO calls.
\end{proof}

\subsection{The Proof of Corollary \ref{cor:ifo}}
\begin{proof}
This result can be achieved by directly combining Theorem \ref{thm:kn} and \ref{thm:n}.    
\end{proof}

\section{The Proofs for Section \ref{sec:decentralized}}\label{appendix:decentralized}
Without loss of generality, we always assume that all agents start with the internal memory of null space, i.e., we have~$\mathcal{M}_i^0=\left\{\mathbf{0}\right\}$ for any $i\in[n]$.

The main idea in our lower bound analysis splitting the function $g_{T,t}:\BR^{Tt}\to\BR$ defined in equation (\ref{eq:g}) by introducing the functions $q_1:\BR^{Tt}\to\BR$, $q_2:\BR^{Tt}\to\BR$ and $r:\BR^{Tt}\to\BR$ as 
\begin{align}
&	q_1(\vx)=\frac{1}{2}\sum_{i=1}^{Tt/2}(x_{2i-1}-x_{2i})^2, \label{eq:q1} \\
&	q_2(\vx)=\frac{1}{2}\sum_{i=0}^{t-1}\left[\Big(\frac{7}{8}x_{iT}-x_{iT+1}\Big)^2+\sum_{j=iT/2+1}^{(i+1)T/2-1} (x_{2j}-x_{2j+1})^2\right], \label{eq:q2} \\
& r(\vx)=\sum_{i=1}^{Tt}\psi_{b_i}(b_i-x_i), \label{eq:r}
\end{align}
where we suppose $T$ is even and let $x_0=0$.
Then we can verify that the function $g_{T,t}(\cdot)$ can be written as
\begin{align*}
    g_{T,t}(\vx)=q_1(\vb-\vx)+q_2(\vb-\vx)+r(\vx).
\end{align*}

We let $\fG=\{\fV,\fE\}$ be the graph associated to the network  of the agents in decentralized optimization, where the node set~$\fV=\{1,\dots,n\}$ corresponds to the $n$ agents and the edge set $\fE=\{(i,j):\text{node $i$ and node $j$ are connected}\}$ describes the topology of the agents network.  

For given a subset $\fC\subseteq\mathcal{V}$, we define the function $h_i^\fC(\vx):\BR^{Tt}\rightarrow\BR$ as
\begin{align}\label{eq:h}
h_i^\fC(\vx)=
\begin{cases}
\frac{r(\vx)}{n}+\frac{q_1(\vb-\vx)}{\vert \fC\vert}   &{i\in \fC}, \\[0.2em]
\frac{r(\vx)}{n}+\frac{q_2(\vb-\vx)}{\vert \fC_{\sigma}\vert}   &{i\in \fC_{\sigma}}, \\[0.2em]
\frac{r(\vx)}{n},   &{\text{otherwise}},\\    
\end{cases}
\end{align}
where 
$\fC_{\sigma}=\left\{v\in\mathcal{V}:{\rm dis}(\fC,v)\geq \sigma\right\}$ and ${\rm dis}(\fC,v)$ is the distance between set $\fC$ and node $v$.

We introduce the following property \citep[Lemma 4]{yue2023lower} of function $\psi_\theta:\BR\to\BR$ to analyze the smoothness of $h_i^\fC(\cdot)$ and the communication complexity for hard instance.

\begin{lemma}\label{lem:psi}
    For any $\theta>0$, the function $\psi_\theta:\BR\to\BR$ defined in equation (\ref{eq:psi}) is 33-smooth and holds $\psi_{\theta}'(\theta)=0$.
\end{lemma}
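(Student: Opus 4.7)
The plan is a direct piecewise verification, since $\psi_\theta$ is defined by a four-piece formula with breakpoints at $\tfrac{31}{32}\theta$, $\theta$, and $\tfrac{33}{32}\theta$.

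First I would compute $\psi_\theta'$ and $\psi_\theta''$ on each of the four open pieces. A short calculation gives $\psi_\theta'(x)=x$ and $\psi_\theta''(x)=1$ on the outer pieces $x<\tfrac{31}{32}\theta$ and $x>\tfrac{33}{32}\theta$; on the middle-left piece $\tfrac{31}{32}\theta<x<\theta$ one obtains $\psi_\theta'(x)=-31x+31\theta$ and $\psi_\theta''(x)=-31$; and on the middle-right piece $\theta<x<\tfrac{33}{32}\theta$ one obtains $\psi_\theta'(x)=33x-33\theta$ and $\psi_\theta''(x)=33$.

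Next I would verify continuity of $\psi_\theta$ and of $\psi_\theta'$ at the three breakpoints by matching one-sided limits. For instance, at $x=\tfrac{31}{32}\theta$ both one-sided derivatives equal $\tfrac{31}{32}\theta$; at $x=\theta$ both equal $0$ (which simultaneously establishes the identity $\psi_\theta'(\theta)=0$); and at $x=\tfrac{33}{32}\theta$ both equal $\tfrac{33}{32}\theta$. The function values are matched by construction of the additive $-\tfrac{1}{32}\theta^2$ offset in the last two pieces. This shows $\psi_\theta$ is $C^1$ on $\BR$ and that $\psi_\theta'$ is piecewise linear and continuous.

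Finally, since $\psi_\theta'$ is piecewise linear with slopes in $\{1,-31,33,1\}$, it is globally Lipschitz with Lipschitz constant equal to the maximum absolute slope, namely $33$. This is exactly the $33$-smoothness claim. The assertion $\psi_\theta'(\theta)=0$ was already read off during the breakpoint check at $x=\theta$. There is no real obstacle here; the only thing to be careful about is to evaluate the one-sided derivatives consistently at each breakpoint so that both $C^1$-continuity and the slope bound are established in the same table of computations.
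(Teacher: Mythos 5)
Your verification is correct, and there is nothing to compare it against in the paper itself: the authors state this lemma as \citep[Lemma 4]{yue2023lower} and give no proof, so a direct piecewise check is exactly what is needed. Your computed slopes $\{1,-31,33,1\}$ of $\psi_\theta'$ on the four pieces, the matching of the one-sided derivatives at $\tfrac{31}{32}\theta$, $\theta$, and $\tfrac{33}{32}\theta$ (giving $C^1$-continuity and, at $x=\theta$, the identity $\psi_\theta'(\theta)=0$), and the conclusion that a continuous piecewise-linear derivative with maximal absolute slope $33$ is $33$-Lipschitz are all accurate, so the lemma is fully established.
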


Now we present the proofs for lower bounds in decentralized setting, which is based on our construction (\ref{eq:q1})--(\ref{eq:r}).

\subsection{Proof of Lemma \ref{lem:decentralized instance}}
\begin{proof} 
We can observed that the function $q_1(\vx)$ is quadratic and holds $q_1(\vx)\leq 1$ for any $\vx\in\BR^{Tt}$, then it is $2$-smooth. 
Similarly, the function $q_2(\vx)$ is also $2$-smooth.
		
For any $\vx,\vy\in\BR^{Tt}$, we have
\begin{equation*}
			\begin{split}
				\Norm{\nabla r(\vx)-\nabla r(\vy)}^2&=\sum_{i=1}^{Tt}\left(v'_{b_i}(b_i-x_i)-v'_{b_i}(b_i-y_i)\right)^2\\
				&\leq\sum_{i=1}^{Tt}\left(33(x_i-y_i)\right)^2\\
				&=33^2\Norm{\vx-\vy}^2,\\
			\end{split}
		\end{equation*}
where inequality is based on the Lemma \ref{lem:psi}. This implies the function $r:\BR^{Tt}\to\BR$ is 33-smooth.
	
Combing above smoothness properties and the definition of $h_i^\fC(\cdot)$, we conclude each $h_i^\fC$ is $(33/n+\max\left\{2/\vert \fC\vert,2/\vert \fC_\sigma\vert\right\})$-smooth and thus $\big\{h_i^\fC:\BR^{Tt}\to\BR\big\}_{i=1}^n$ is $(33/n+\max\left\{2/\vert \fC\vert,2/\vert \fC_\sigma\vert\right\})$-mean-squared smooth.
	
The definition of $h(\cdot)$ and $h_i^\fC(\cdot)$ implies
\begin{align}\label{eq:hg}
    h(\vx) = \frac{1}{n}\sum_{i=1}^n h_i^\fC(\vx) = \frac{q_1(\vb-\vx)+q_2(\vb-\vx)+r(\vx)}{n} = \frac{g_{T,t}(\vx)}{n}.
\end{align}
Then applying statements (c) and (d) of Lemma \ref{lem:basic function} and Lemma \ref{lem:scale} finish the proof for the last two statements.
\end{proof}

\subsection{Proof of Lemma \ref{lem:communication time}}
\begin{proof}
The definitions $f_i(\vx)=\alpha h_i^\fC(\beta \vx)$ and $f(\vx)=\frac{1}{n}\sum_{i=1}^{n}f_i(\vx)$ and equation (\ref{eq:hg}) implies
$f(\vx)=\alpha g_{T,t}(\beta\vx)/n$.
Then the statement (e) of Lemma \ref{lem:basic function} implies when $\text{supp}(\vx)\subseteq\left\{1,2,\cdots,Tt/2\right\}$, we have
\begin{align}
	f(\vx)-f^*=\frac{\alpha}{n}(g_{T,t}(\beta\vx)-g_{T,t}^*)>\frac{3\alpha T\delta}{n},
\end{align}
since we have assumed  $\delta<0.01$ and $t=2\lfloor\log_{{8}/{7}}({2}/{3\delta})\rfloor$.

We define
\begin{align*}
    {\rm nnz}(s,i) \triangleq
     \max\left\{m\in\BN:\text{there exists $\vy\in\mathcal{M}_i^s$ such that $y_m\neq0$}\right\}.
\end{align*}
Then statements (a) of Lemma \ref{lem:basic function} implies achieving an $3\alpha T\delta/n$-suboptimal solution requires 
\begin{align}\label{ieq:nnz}
    \mathop{\max}_{i\in[n]}\left\{{\rm nnz}(s,i)\right\}\geq \frac{Tt}{2}+1.
\end{align}
Now we consider how much local computation steps and local communication steps we need to attain the condition (\ref{ieq:nnz}). 

According to Lemma \ref{lem:psi} and equation (\ref{eq:r}), for any $\vx\in\mathcal{M}_i^s$, we have
\begin{equation}\label{eq:others}
	(\nabla r(\vx))_k=0,\qquad \text{for any}\quad k> {\rm nnz}(s,i).
\end{equation}
According to equation (\ref{eq:q1}), for any $\vx\in\mathcal{M}_i^s$, we have
\begin{equation}\label{eq:C}
	(\nabla q_1(\vb-\vx))_k=0, \qquad \text{for any}\quad k> {\rm nnz}(s,i)+\BI\left\{{\rm nnz}(s,i)\equiv 1 \pmod{2}\right\}.
\end{equation}
According to equation (\ref{eq:q2}), for any $\vx\in\mathcal{M}_i^s$, we have
\begin{equation}\label{eq:C_sigma}
	(\nabla q_2(\vb-\vx))_k=0, \qquad \text{for any}\quad k> {\rm nnz}(s,i)+\BI\left\{{\rm nnz}(s,i)\equiv 0 \pmod{2}\right\}.
\end{equation}
Combining (\ref{eq:h}), (\ref{eq:others}), (\ref{eq:C}), (\ref{eq:C_sigma}) and Definition \ref{dfn:DFO}, we know that for any DFO algorithm:
\begin{enumerate}
    \item If $i\in\fC$ and ${\rm nnz}(s,i)$ is odd, one step of local computation can increase at most one dimension for memory of node $i$. 
    \item If $i\in\fC_\sigma$ and ${\rm nnz}(s,i)$ is even, one step of local computation can increase at most one dimension for memory of node~$i$.
    \item Otherwise, one step of local computation cannot increase the dimension for memory of node~$i$.
\end{enumerate}
In summary, we have
\begin{align}\label{eq:nnz}
    {\rm nnz}(s+1,i) \leq \begin{cases}
        {\rm nnz}(s,i)+1,  & \text{if}~~i\in\fC,\  {\rm nnz}(s,i)\equiv 1 \pmod{2}, \\
        {\rm nnz}(s,i)+1, & \text{if}~~i\in\fC_\sigma,\  {\rm nnz}(s,i)\equiv 0 \pmod{2}, \\
        {\rm nnz}(s,i), &\text{otherwise}.
    \end{cases}
\end{align}

We consider the cost to reach the second coordinate from the initial status that $\fM_i^0=\{\vzero\}$ for all $i\in[n]$.
According to equation (\ref{eq:nnz}), we need to let a node in $\fC$ reach the first coordinate, which requires at least one local computation step on some node in $\fC_\sigma$ first. 
Then, according to definitions of DFO algorithm (Definition \ref{dfn:DFO}) and $\fC_\sigma$, one must perform at least $\sigma$ local communication steps for a node in $\fC$ to receive the information of the first coordinate from some node in $\fC_\sigma$. 
After above steps, we can perform at least 1 computation on nodes in $\fC$ to reach the second coordinate.
In summary, to reach the second coordinate requires at least 2 local computation step and $\sigma$ local communication step.

Similarly, to reach the $k$-th coordinate, a DFO algorithm must perform at least $k$ local computation steps and $(k-1)\sigma$ local communication steps.
Thus, to attain the condition (\ref{ieq:nnz}), one needs at least $Tt/2+1$ local computation steps and $Tt\sigma/2$ local communication steps, which corresponds to $Tt\sigma/2$ communications round and $Tt(1+\sigma\tau)/2$ time cost.
\end{proof}
\begin{remark}
Noticing that one computation step corresponds to one unit of time cost. However, some of agents (maybe not all agents) can parallel compute their local gradient, which means the computational time cost may be not proportion to the number of local gradient oracle calls.    
\end{remark}

\subsection{Proof of Theorem \ref{thm:decentralized-lower}}\label{appendix:instance-exp}
\begin{proof}
We consider the instance graph provided by \citet{scaman2017optimal}, which associated to the specific spectral gap.
Concretely, we let $\iota_m=\left(1-\cos({\pi}/{m})\right)/\left(1+\cos({\pi}/{m})\right)$.
For given $\gamma\in(0,1]$, let
$m=\big\lfloor{\pi}/{\arccos{{((1-\gamma)}/{(1+\gamma))}}}\big\rfloor$,
then we have $m\geq2$ and $\iota_{m+1}<\gamma\leq\iota_m$.
We study the cases of $m\geq3$ and $m=2$ separately.
		
We first consider the case of $m\geq3$. 
Let the agent number $n=m$, take $\mathcal{G}=\{\fV,\fE\}$ be the undirected linear graph of size~$n$ ordered from node $1$ to node $n$ such that $\fV=\{1,\dots,n\}$ and $\fE=\{(i,j):|i-j|=1, i\in\fV ~~\text{and}~~ j\in\fV\}$. 
We define a weighted matrix $\hat\mW_l\in\BR^{n\times n}$ for $\fG$ such that $\hat w_{i+1,i}=\hat w_{i,i+1}=1-l\BI(i=1)$ and $\hat w_{ij}=0$ for other entries.
Let~$\hat\mR_l\in\BR^{n\times n}$ be the Laplacian matrix of graph $\mathcal{G}$ associated to weighted matrix $\hat\mW_l$. We define $\lambda_1(\hat\mR_l),\dots,\lambda_n(\hat\mR_l)$ be eigenvalues of $\hat\mR_l$ such that $0=\lambda_n(\hat\mR_l)\leq\lambda_{n-1}(\hat\mR_l)\leq\cdots\leq\lambda_1(\hat\mR_l)$. 
A simple calculation gives that 
\begin{align*}
\lambda_{n-1}(\hat\mR_0)=2\left(1-\cos({\pi}/{m})\right), \quad
\lambda_{1}(\hat\mR_0)=2\left(1+\cos({\pi}/{m})\right), \quad
\lambda_{n-1}(\hat\mR_1)=0 \qquad\text{and} \qquad
\lambda_{1}(\hat\mR_l)>0     
\end{align*}
for any $l\in[0,1]$, then we have $\lambda_{n-1}(\hat\mR_0)/\lambda_{1}(\hat\mR_0)=\iota_m$ and $\lambda_{n-1}(\hat\mR_1)/\lambda_{1}(\hat\mR_1)=0$.
By the continuity of the eigenvalues of a matrix and the fact $0<\gamma\leq\iota_m$, there exists some $l\in\left[0,1\right)$ such that $\lambda_{n-1}(\hat\mR_l)/\lambda_{1}(\hat\mR_l)=\gamma$. 
Let $\mW=1-\hat\mR_l/\lambda_1(\hat\mR_l)$, then the spectral gap satisfies $\gamma(\mW)=\lambda_{n-1}(\hat\mR_l)/\lambda_{1}(\hat\mR_l)=\gamma$. According to basic properties of Laplacian matrix, the matrix~$\mW\in\BR^{n\times n}$ is a mixing matrix satisfies Assumption \ref{asm:W}. 

We take $\left\{h_i^\fC:\BR^{Tt}\to\BR\right\}_{i=1}^n$ by following equation (\ref{eq:h}) with 
\begin{align*}
\fC=\left\{1,\dots, \left\lceil \frac{n}{32}\right\rceil\right\},\qquad
\sigma=\left\lceil \frac{15n}{16}\right\rceil-1,\qquad T=2\left\lfloor\frac{\kappa}{194a}\right\rfloor 
\qquad\text{and}\qquad 
t=2\left\lfloor \log_{\frac{8}{7}}\frac{2\Delta}{3\epsilon}\right\rfloor.  
\end{align*}
Let $h(\cdot)=\frac{1}{n}\sum_{i=1}^{n}h_i^\fC(\cdot)$.
Lemma \ref{lem:decentralized instance} means the function set $\left\{h_i^\fC:\BR^{Tt}\to\BR\right\}_{i=1}^n$ is $(33/n+\max\left\{2/\vert \fC\vert,2/\vert \fC_{\sigma}\vert\right\})$-mean-squared smooth, and the function $h$ is $1/\left(anT\right)$-PL and satisfies
\begin{align*}
    h(\mathbf{0})-h^* \leq \frac{3T}{n}.
\end{align*}
Let $f_i(\vx)=\alpha h_i^\fC(\beta \vx)$, $f(\vx)=\sum_{i=1}^{n}f_i(\vx)/n$ and apply Lemma \ref{lem:scale} with $\alpha=n\Delta/(3T)$ and $\beta=\sqrt{3LT/(97\Delta)}$, then the function set $\left\{f_i:\BR^{Tt}\to\BR\right\}_{i=1}^n$ is $\alpha\beta^2(33/n+\max\left\{2/\vert \fC\vert,2/\vert \fC_{\sigma}\vert\right\})$-mean-squared smooth, and the function $f:\BR^{Tt}\to\BR$ is $\alpha\beta^2/\left(anT\right)$-PL and satisfies
\begin{align*}
    f(\mathbf{0})-f^*\leq \frac{3\alpha T}{n}.
\end{align*}
The values of $\alpha$ and $\beta$ means
\begin{align*}
& \alpha\beta^2\left(\frac{33}{n}+\frac{2}{\vert \fC_\sigma\vert}\right)
\leq\alpha\beta^2\left(\frac{33}{n}+\frac{2}{\vert \fC\vert}\right)
\leq\frac{nL}{97}\left(\frac{33}{n}+\frac{64}{n}\right)
=L, \\
& \frac{\alpha\beta^2}{anT}=\frac{L}{97aT}\geq\mu
\qquad \text{and} \qquad
\frac{3\alpha T}{n}=\Delta.
\end{align*}
According to Lemma \ref{lem:scale} and Lemma \ref{lem:decentralized instance}, we conclude $\left\{f_i:\BR^{Tt}\to\BR\right\}_{i=1}^n$ is $L$-mean-squared smooth and $f:\BR^d\to\BR$ is $\mu$-PL with $f(\mathbf{0})-f^*\leq\Delta$.
	
Applying Lemma \ref{lem:communication time} with $\delta=\epsilon/\Delta<0.01$, any DFO algorithm needs at least $Tt\sigma/2$ communication steps and $Tt(1+\sigma\tau)/2$ time cost to achieve an $\epsilon$-suboptimal solution since $3\alpha T\delta/n=\epsilon$.
The setting $n=m$ implies
\begin{align*}
\frac{2}{(n+1)^2}\leq\iota_{n+1}<\gamma\leq\iota_3=\frac{1}{3},
\end{align*}
which means
\begin{align*}
	\sigma=\left\lceil \frac{15n}{16} \right\rceil-1\geq\frac{15}{16}\left(\sqrt{\frac{2}{\gamma}}-1\right)-1\geq\frac{1}{5\sqrt{\gamma}}.
\end{align*}
Hence, we achieve the lower bounds for communication complexity and time complexity of $Tt\tau/2=\Omega\big(\kappa/\sqrt{\gamma}
\log(1/\epsilon)\big)$ and $Tt(1+\sigma\tau)/2=\Omega\big(\kappa(1+\tau/\sqrt{\gamma})\log(1/\epsilon)\big)$ respectively.  

We then consider the case of $m=2$. 
We let the agent number be $n=3$ and take $\mathcal{G}=\{\fV,\fE\}$ be the totally connected graph of size $n$ such that $\fV=\{1,\dots,n\}$ and $\fE=\{(i,j):i\in\fV ~~\text{and}~~ j\in\fV\}$.
We define a weighted matrix $\hat\mW_l\in\BR^{n\times n}$ for $\fG$ such that $\hat w_{1,3}=\hat w_{3,1}=l$ and $w_{ij}=1$ for other entries.
Let $\hat\mR_l\in\BR^{n\times n}$ be the Laplacian matrix of graph $\mathcal{G}$ associated to weighted matrix $\hat\mW_l$. We define $\lambda_1(\hat\mR_l),\dots,\lambda_n(\hat\mR_l)$ be eigenvalues of $\hat\mR_l$ such that~$0=\lambda_n(\hat\mR_l)\leq\lambda_{n-1}(\hat\mR_l)\leq\cdots\leq\lambda_1(\hat\mR_l)$. 
A simple calculation gives that 
\begin{align*}
\lambda_{n-1}(\hat\mR_1)=\lambda_{1}(\hat\mR_1)=3, \quad
\lambda_{n-1}(\hat\mR_0)=2\left(1-\cos({\pi}/{n})\right), \quad
\lambda_{1}(\hat\mR_0)=2\left(1+\cos({\pi}/{n})\right) \quad\text{and} \quad
\lambda_{1}(\hat\mR_l)>0     
\end{align*}
for any $l\in[0,1]$.
Then we have $\lambda_{n-1}(\hat\mR_0)/\lambda_{1}(\hat\mR_0)=\iota_n=\iota_3$ and $\lambda_{n-1}(\hat\mR_1)/\lambda_{1}(\hat\mR_1)=1$.
By continuity of the eigenvalues of a matrix and the fact $\iota_3<\gamma\leq1$, there exists some $l\in\left(0,1\right]$ such that $\lambda_{n-1}(\hat\mR_l)/\lambda_{1}(\hat\mR_l)=\gamma$.
Let~$\mW=1-\hat\mR_l/\lambda_1(\hat\mR_l)$, then the spectral gap satisfies $\gamma(\mW)=\lambda_{n-1}(\hat\mR_l)/\lambda_{1}(\hat\mR_l)=\gamma$. 
According to basic properties of Laplacian matrix, the matrix~$\mW\in\BR^{n\times n}$ is a mixing matrix satisfies Assumption \ref{asm:W}. 
		
We take $\left\{h_i^\fC:\BR^{Tt}\to\BR\right\}_{i=1}^n$ by following equation (\ref{eq:h}) with 
\begin{align*}
\fC=\left\{1\right\},\qquad\sigma=1,\qquad T=2\left\lfloor\frac{\kappa}{78a}\right\rfloor \qquad\text{and}\qquad t=2\left\lfloor \log_{\frac{8}{7}}\frac{2\Delta}{3\epsilon}\right\rfloor.    
\end{align*}
Let $h(\cdot)=\frac{1}{n}\sum_{i=1}^{n}h_i^\fC(\cdot)$.
Lemma \ref{lem:decentralized instance} means the function set $\left\{h_i^\fC:\BR^{Tt}\to\BR\right\}_{i=1}^n$ is $(33/n+\max\left\{2/\vert \fC\vert,2/\vert \fC_{\sigma}\vert\right\})$-mean-squared smooth, $h$ is $1/\left(anT\right)$-PL and satisfies
\begin{align*}
    h(\mathbf{0})-h^*\leq3T/n.
\end{align*}
Let $f_i(\vx)=\alpha h_i^\fC(\beta \vx)$, $f(\vx)=\sum_{i=1}^{n}f_i(\vx)/n$ and apply Lemma \ref{lem:scale} with $\alpha=n\Delta/(3T)$ and $\beta=\sqrt{LT/(13\Delta)}$, then we conclude $\left\{f_i:\BR^d\to\BR\right\}_{i=1}^n$ is $\alpha\beta^2(33/n+\max\left\{2/\vert \fC\vert,2/\vert \fC_{\sigma}\vert\right\})$-mean-squared smooth and $f$ is $\alpha\beta^2/\left(anT\right)$-PL and satisfies
\begin{align*}
    f(\mathbf{0})-f^* \leq \frac{3\alpha T}{n}.
\end{align*}

The values of $\alpha$ and $\beta$ means
\begin{align*}
	 \alpha\beta^2\left(\frac{33}{n}+\frac{2}{\vert \fC_\sigma\vert}\right)
			<\alpha\beta^2\left(\frac{33}{n}+\frac{2}{\vert \fC\vert}\right)
	=\frac{nL}{39}\left(\frac{33}{n}+\frac{6}{n}\right)
	=L,  \qquad
	 \frac{\alpha\beta^2}{anT}=\frac{L}{39aT}\geq\mu \qquad\text{and}\qquad
	\frac{3\alpha T}{n}=\Delta.
\end{align*}
According to Lemma \ref{lem:scale} and Lemma \ref{lem:decentralized instance}, we conclude $\left\{f_i:\BR^{Tt}\to\BR\right\}_{i=1}^n$ is $L$-mean-squared smooth, and $f:\BR^d\to\BR$ is $\mu$-PL with $f(\mathbf{0})-f^*\leq\Delta$.
		
Applying Lemma \ref{lem:communication time} with $\delta=\epsilon/\Delta<0.01$, any DFO algorithm needs at least $Tt\sigma/2$ communications and $Tt(1+\sigma\tau)/2$ time to reach an 
$\epsilon$-suboptimal solution since $3\alpha T\delta/n=\epsilon$.
The setting $n=3$ means $\gamma>\iota_3=1/3$ and $\sigma=1>1/\sqrt{3\gamma}$, which results the lower bounds for communication complexity and time complexity of $Tt\tau/2=\Omega\big(\kappa/\sqrt{\gamma}
\log(1/\epsilon)\big)$ and~$Tt(1+\sigma\tau)/2=\Omega\big(\kappa(1+\tau/\sqrt{\gamma})\log(1/\epsilon)\big)$ respectively.

\end{proof}
 
\section{The Proofs in Section \ref{sec:algorithm}}\label{appendix:convergence}

Recall that we define the Lyapunov function
\begin{align*}
    \Phi^t=\mathbb{E}[f(\bx^t)-f^*]+\alpha U^t+\beta V^t+LC^t,
\end{align*}
where $\alpha=2\eta/p$, $\beta=8L\rho^2n\eta^2$,
\begin{align*}
    U^t=\mathbb{E}\Big\Vert\frac{1}{n}\sum_{i=1}^{n}\big(\vg^t(i)-\nabla f_i(\vx^t(i))\big)\Big\Vert^2,~~
    V^t=\frac{1}{n}\mathbb{E}\Vert \mG^t-\nabla\mF(\mX^t)\Vert^2 
    ~~\text{and}~~
    C^t=\BE\Vert\mX^t-\mathbf{1}\bx^t\Vert^2+\eta^2\mathbb{E}\Vert\mS^t-\mathbf{1}\bs^t\Vert^2.
\end{align*}  
Compared with the analysis of \citet{luo2022optimal,li2022destress} for general nonconvex case, we introduce the term of $f^*$ into the Lyapunov function to show the linear convergence under the PL condition.
Different with previous work \cite{luo2022optimal,li2022destress} suppose each $f_i(\cdot)$ is $L$-smooth, our analysis only require $\{f_i(\cdot)\}_{i=1}^n$ is $L$-smooth that allows each~$f_i(\cdot)$ to be $\sqrt{n}L$-smooth (see Lemma \ref{lem:smooth}).

The remainder of this section first provide some technical Lemmas, then give the detailed proofs for the results in Section~\ref{sec:decentralized}.

\subsection{Some Technical Lemmas}

We introduce some lemmas for our later analysis.
\begin{lemma}[{\citet[Proposition 1]{ye2023multi}}]\label{lem:FM} 
Under Assumption \ref{asm:W}, Algorithm~\ref{alg:acc-gossip} holds 
\begin{align*}
    \frac{1}{n}\vone^\top\mY^K = \bar\vy^0
    \qquad\text{and}\qquad
	\big\|\mY^K-\mathbf{1}\bar\vy^0\big\| \leq  \sqrt{14}\left(1-\left(1-\frac{1}{\sqrt{2}}\right)\sqrt{1-\lambda_2(\mW)}\right)^K \big\|\mY^0-\mathbf{1}\bar\vy^0\big\|,
\end{align*}
where $\bar\vy^{0} = \frac{1}{n}\mathbf{1}^\top\mY^0=\frac{1}{n}\sum_{i=1}^n \mY^0(i)$.
\end{lemma}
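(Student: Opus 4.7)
The plan is to split the claim into mean preservation and a contraction on the orthogonal complement of $\vone$, then reduce the contraction to a scalar Chebyshev-type estimate on the characteristic roots of the iteration.

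For the mean identity $\frac{1}{n}\vone^\top\mY^K=\bar\vy^0$, I would induct on $k$. The base cases $k\in\{-1,0\}$ are immediate from $\mY^{-1}=\mY^0$, and for the inductive step, left-multiplying $\mY^{k+1}=(1+\eta_y)\mW\mY^k-\eta_y\mY^{k-1}$ by $\frac{1}{n}\vone^\top$ and using $\vone^\top\mW=\vone^\top$ (from $\mW=\mW^\top$ and $\mW\vone=\vone$ in Assumption~\ref{asm:W}) yields $(1+\eta_y)\bar\vy^0-\eta_y\bar\vy^0=\bar\vy^0$.

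For the contraction, set $\tilde\mY^k\triangleq\mY^k-\vone\bar\vy^0$. Linearity of the recursion together with $\mW\vone=\vone$ shows that $\tilde\mY^k$ obeys the same recursion with $\tilde\mY^{-1}=\tilde\mY^0$ lying in the left-complement of $\vone$. Diagonalizing $\mW=\sum_i\lambda_i\vu_i\vu_i^\top$ with $\vu_1=\vone/\sqrt n$, $\lambda_1=1$, and $\lambda_i\in[0,\lambda_2(\mW)]$ for $i\geq 2$, and expanding $\tilde\mY^0=\sum_{i\geq 2}\vu_i\va_i^\top$, the recursion decouples into the scalar recursion $q_{k+1}(x)=(1+\eta_y)xq_k(x)-\eta_y q_{k-1}(x)$ with $q_0=q_{-1}=1$, giving $\tilde\mY^k=\sum_{i\geq 2}q_k(\lambda_i)\vu_i\va_i^\top$. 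Orthonormality then yields $\|\tilde\mY^K\|\leq\big(\sup_{x\in[0,\lambda_2(\mW)]}|q_K(x)|\big)\|\tilde\mY^0\|$, so everything reduces to a uniform scalar bound. The characteristic equation $z^2-(1+\eta_y)xz+\eta_y=0$ has discriminant $(1+\eta_y)^2x^2-4\eta_y$, which a short calculation shows is nonpositive on $[0,\lambda_2(\mW)]$ precisely because of the choice $\eta_y=1/(1+\sqrt{1-\lambda_2^2(\mW)})$. The roots therefore form a complex-conjugate pair of modulus $\sqrt{\eta_y}$, giving the closed form $q_k(x)=\eta_y^{k/2}(\cos k\theta(x)+B(x)\sin k\theta(x))$ with $\cos\theta(x)=(1+\eta_y)x/(2\sqrt{\eta_y})$ and $B(x)=(\cos\theta(x)-\sqrt{\eta_y})/\sin\theta(x)$ fixed by $q_{-1}=1$. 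A bounded-amplitude calculation, using the cancellation $1+B(x)^2=(1+\eta_y)(1-x)/\sin^2\theta(x)$, delivers $\sqrt{1+B(x)^2}\leq\sqrt{14}$ uniformly on the spectral interval, and hence $|q_K(x)|\leq\sqrt{14}\,\eta_y^{K/2}$.

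Finally, convert the rate into the stated form via $\eta_y^{1/2}=(1+\sqrt{1-\lambda_2^2(\mW)})^{-1/2}\leq 1-(1-1/\sqrt 2)\sqrt{1-\lambda_2(\mW)}$, combining $\sqrt{1-\lambda_2^2(\mW)}\geq\sqrt{1-\lambda_2(\mW)}$ (from $\lambda_2(\mW)\geq 0$) with the elementary convexity fact $(1+s)^{-1/2}\leq 1-(1-1/\sqrt 2)s$ on $s\in[0,1]$, which is tight at both endpoints and follows from convexity of $s\mapsto(1+s)^{-1/2}$. Raising to the $K$-th power completes the bound. The hard part is the uniform amplitude estimate $\sqrt{1+B(x)^2}\leq\sqrt{14}$: the denominator $\sin^2\theta(x)$ degenerates at $x=\lambda_2(\mW)$ as $\lambda_2(\mW)\to 1$, so extracting an absolute constant rests on the exact cancellation in $1+B(x)^2=(1+\eta_y)(1-x)/\sin^2\theta(x)$ and a careful reduction to a one-variable inequality in $s=\sqrt{1-\lambda_2^2(\mW)}$. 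An alternative route that avoids the trigonometric bookkeeping identifies $q_k$ as a linear combination of Chebyshev polynomials $T_k$ and $U_{k-1}$ in the variable $y=(1+\eta_y)x/(2\sqrt{\eta_y})\in[-1,1]$, and invokes the standard bounds $|T_k|,|U_{k-1}|\leq 1$ on $[-1,1]$.
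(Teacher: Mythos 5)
The paper does not prove this lemma at all: it is imported verbatim as \citet[Proposition 1]{ye2023multi}, so there is no in-paper argument to compare against. Your blind derivation is the standard Chebyshev-acceleration analysis and its main line is correct. The mean-preservation induction is immediate from $\vone^\top\mW=\vone^\top$; the reduction of $\tilde\mY^k=\mY^k-\vone\bar\vy^0$ to the scalar recursion $q_{k+1}(x)=(1+\eta_y)xq_k(x)-\eta_y q_{k-1}(x)$ on the spectrum $\{\lambda_i\}_{i\ge2}\subseteq[0,\lambda_2(\mW)]$ is valid (eigenvalue $1$ is simple because Assumption~\ref{asm:W}(c) forces $\lambda_2<1$); the discriminant check reduces to $(1-s)(2+s)^2\le 4$ with $s=\sqrt{1-\lambda_2^2}$, which holds; and the rate conversion $(1+s)^{-1/2}\le 1-(1-1/\sqrt2)s$ is the correct chord bound for a convex function. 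I verified the amplitude step you flag as the hard part: your identity $1+B(x)^2=(1+\eta_y)(1-x)/\sin^2\theta(x)$ is exact, and since $(1-x)/(1-\beta x^2)$ with $\beta=(1+\eta_y)^2/(4\eta_y)$ decreases then increases on $[0,\lambda_2]$, the supremum sits at an endpoint, where one computes
\begin{align*}
1+B(0)^2=1+\eta_y\le 2
\qquad\text{and}\qquad
1+B(\lambda_2)^2=\frac{4(2+s)}{(1+\lambda_2)(1+s)(3+s)}\le \frac{3}{2},
\end{align*}
so in fact $\sqrt{1+B^2}\le\sqrt2\le\sqrt{14}$ and the stated constant holds with room to spare. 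Two caveats. First, you assert the uniform amplitude bound rather than carrying out the endpoint computation; the assertion is true, but the monotonicity analysis above is what actually closes it and should appear in a complete write-up. Second, your parenthetical ``alternative route'' is wrong as stated: $|U_{k-1}(y)|\le 1$ fails on $[-1,1]$ (indeed $U_{k-1}(1)=k$), so the Chebyshev-polynomial shortcut does not directly yield a $K$-independent constant; only the trigonometric argument with the $(1-x)$ cancellation does. Since that aside is not load-bearing, the proof stands.
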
 

\begin{lemma}[{\citep[Lemma 3]{ye2023multi}}]\label{lem:avg-norm}
For any $\mX\in\BR^{n\times d}$, we have~$\Norm{\mX - \vone\bx} \leq \Norm{\mX}$ where $\mX=\frac{1}{n}\vone^\top\mX$.
\end{lemma}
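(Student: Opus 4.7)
The plan is to recognize $\mX - \vone\bx$ as an orthogonal projection of $\mX$. Since $\bx = \frac{1}{n}\vone^\top\mX$ is a $1\times d$ row vector, we have $\vone\bx = \frac{1}{n}\vone\vone^\top\mX$, so
\begin{align*}
\mX - \vone\bx = \left(\mI_n - \tfrac{1}{n}\vone\vone^\top\right)\mX.
\end{align*}

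Next I would verify that the matrix $\mP \triangleq \mI_n - \frac{1}{n}\vone\vone^\top$ is the orthogonal projector onto $\vone^\perp \subset \BR^n$. This is a direct calculation: $\mP^\top = \mP$, and using $\vone^\top\vone = n$ one checks $\mP^2 = \mP$. Hence all eigenvalues of $\mP$ are in $\{0,1\}$, so its spectral norm is at most $1$.

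Finally, applying the standard inequality $\|\mP\mX\| \leq \|\mP\|_{\mathrm{sp}}\,\|\mX\|$ for the Frobenius norm (where $\|\cdot\|_{\mathrm{sp}}$ denotes the spectral norm) gives $\|\mX - \vone\bx\| \leq \|\mX\|$, as claimed.

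There is essentially no obstacle: this is a one-line linear algebra fact. The only thing to take care of is the row/column convention declared earlier in the paper (rows of $\mX$ are the local variables $\vx(i)$, and $\bx$ is a row), which ensures the identity $\vone\bx = \frac{1}{n}\vone\vone^\top\mX$ is valid as a matrix equation in $\BR^{n\times d}$.
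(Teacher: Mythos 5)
Your proof is correct and complete: the identity $\mX - \vone\bx = (\mI_n - \tfrac{1}{n}\vone\vone^\top)\mX$, the verification that $\mI_n - \tfrac{1}{n}\vone\vone^\top$ is an orthogonal projector, and the bound $\Norm{\mP\mX}\leq\Norm{\mP}_{\mathrm{sp}}\Norm{\mX}$ for the Frobenius norm together give exactly the claim. The paper itself does not prove this lemma --- it imports it by citation from the reference --- so there is no in-paper argument to compare against; your projection argument is the standard one (equivalently, one can invoke the Pythagorean identity $\Norm{\mX}^2 = \Norm{\mX-\vone\bx}^2 + \Norm{\vone\bx}^2$, which follows from the same orthogonality). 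Note also that the lemma statement contains a typo ($\mX=\frac{1}{n}\vone^\top\mX$ should read $\bx=\frac{1}{n}\vone^\top\mX$), which you implicitly and correctly repaired.
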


\begin{lemma}\label{lem:smooth}
Under Assumption \ref{asm:as}, the function $f(\cdot)=\frac{1}{n}\sum_{i=1}^n f_i(\cdot)$ is $L$-smooth and each $f_i(\cdot)$ is $\sqrt{n}L$-smooth.
\end{lemma}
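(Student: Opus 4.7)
The plan is to derive both smoothness bounds directly from Assumption~\ref{asm:as} using only elementary inequalities, no auxiliary structure needed.

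For the first claim that $f$ is $L$-smooth, I would write $\nabla f(\vx)-\nabla f(\vy) = \frac{1}{n}\sum_{i=1}^{n}(\nabla f_i(\vx)-\nabla f_i(\vy))$ and apply the Cauchy--Schwarz (equivalently Jensen's) inequality to get $\|\nabla f(\vx)-\nabla f(\vy)\|^2 \leq \frac{1}{n}\sum_{i=1}^n\|\nabla f_i(\vx)-\nabla f_i(\vy)\|^2$. Assumption~\ref{asm:as} bounds the right-hand side by $L^2\|\vx-\vy\|^2$, and taking square roots finishes this part.

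For the second claim that each $f_i$ is $\sqrt{n}L$-smooth, I would observe that a single squared term in the sum is bounded by the whole sum, i.e.,
\begin{align*}
\|\nabla f_i(\vx)-\nabla f_i(\vy)\|^2 \leq \sum_{j=1}^n\|\nabla f_j(\vx)-\nabla f_j(\vy)\|^2 \leq nL^2\|\vx-\vy\|^2,
\end{align*}
where the last inequality is just $n$ times Assumption~\ref{asm:as}. Taking square roots yields the $\sqrt{n}L$ Lipschitz constant.

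Both implications are two-line calculations; there is no genuine obstacle. The only thing worth being slightly careful about is that the second bound is tight in the worst case (a single $f_i$ can indeed be $\sqrt{n}L$-smooth even when the average is $L$-smooth), which is precisely why the lemma is stated this way and why the analysis of DRONE must accommodate per-individual smoothness of order $\sqrt{n}L$ rather than $L$.
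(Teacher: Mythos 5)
Your proposal is correct and follows essentially the same argument as the paper: Jensen's inequality applied to the averaged gradient differences for the $L$-smoothness of $f$, and bounding a single squared term by the full sum (which equals $nL^2\|\vx-\vy\|^2$ by Assumption~\ref{asm:as}) for the $\sqrt{n}L$-smoothness of each $f_i$. No gaps.
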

\begin{proof}
For any $\vx,\vy\in\BR^d$, the mean-square smoothness of $\{f_i:\BR^d\to\BR\}_{i=1}^n$ implies
\begin{align*}
    \norm{\nabla f(\vx)- \nabla f(\vy)}^2 
= & \norm{\frac{1}{n}\sum_{i=1}^n (\nabla f_i(\vx)- \nabla f_i(\vy))}^2 \\
\leq & \frac{1}{n}\sum_{i=1}^n \norm{\nabla f_i(\vx)- \nabla f_i(\vy)}^2 \\
\leq & L\norm{\vx- \vy}^2, 
\end{align*}
which means $f(\cdot)$ is $L$-smooth.

For any $\vx,\vy\in\BR^d$ and $i\in[n]$, the mean-square smoothness of $\{f_i:\BR^d\to\BR\}_{i=1}^n$ implies
\begin{align*} 
        \Vert\nabla f_i(\vx)-\nabla f_i(\vy)\Vert\leq\sqrt{\sum_{i=1}^n\Norm{\nabla f_i(\vx)-\nabla f_i(\vy)}^2}\leq\sqrt{nL^2\Norm{\vx-\vy}^2}\leq\sqrt{n}L\Norm{\vx-\vy},
\end{align*}
which means $f_i(\cdot)$ is $\sqrt{n}L$-smooth.
\end{proof}

\begin{lemma}[{\citet[Lemma 2]{li2021page}}]\label{lem:decrease}
Suppose the function $f:\BR^d\to\BR$ is $L$-smooth and the vectors $\vx_t,\vx_{t+1},\vv_t\in\BR^d$ satisfy $\vx_{t+1}=\vx_{t} - \eta\vv_t$ for $\eta>0$. Then we have
\begin{align}
f(\vx_{t+1})
\leq f(\vx_t) - \frac{\eta}{2}\Norm{\nabla f(\vx_t)}^2 - \left(\frac{1}{2\eta}-\frac{L}{2}\right)\Norm{\vx_{t+1}-\vx_t}^2 + \frac{\eta}{2}\Norm{\vv_t-\nabla f(\vx_t)}^2.
\end{align}
\end{lemma}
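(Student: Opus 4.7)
The plan is to apply the standard descent inequality coming from $L$-smoothness and then rewrite the cross term $\langle \nabla f(\vx_t), \vv_t\rangle$ using the polarization identity so that the gradient norm, the step norm, and the error norm $\|\vv_t - \nabla f(\vx_t)\|$ all appear separately.

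First, I would invoke $L$-smoothness of $f$ to get the quadratic upper bound
\begin{align*}
f(\vx_{t+1}) \leq f(\vx_t) + \inner{\nabla f(\vx_t)}{\vx_{t+1}-\vx_t} + \frac{L}{2}\Norm{\vx_{t+1}-\vx_t}^2.
\end{align*}
Then I would substitute $\vx_{t+1}-\vx_t = -\eta \vv_t$, so that the inner product becomes $-\eta\inner{\nabla f(\vx_t)}{\vv_t}$, and apply the polarization identity
\begin{align*}
2\inner{\nabla f(\vx_t)}{\vv_t} = \Norm{\nabla f(\vx_t)}^2 + \Norm{\vv_t}^2 - \Norm{\vv_t - \nabla f(\vx_t)}^2.
\end{align*}

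The final step is to convert $\frac{\eta}{2}\Norm{\vv_t}^2$ back into a step-norm term via $\Norm{\vx_{t+1}-\vx_t}^2 = \eta^2\Norm{\vv_t}^2$, i.e. $\frac{\eta}{2}\Norm{\vv_t}^2 = \frac{1}{2\eta}\Norm{\vx_{t+1}-\vx_t}^2$. Combining the smoothness term $\frac{L}{2}\Norm{\vx_{t+1}-\vx_t}^2$ with $-\frac{1}{2\eta}\Norm{\vx_{t+1}-\vx_t}^2$ yields the coefficient $-\left(\frac{1}{2\eta}-\frac{L}{2}\right)$ in front of $\Norm{\vx_{t+1}-\vx_t}^2$, which matches the claimed bound exactly.

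There is no real obstacle here; the statement is a textbook descent lemma, and the only thing to be careful about is bookkeeping the sign of the $\frac{1}{2\eta}$ term after combining it with the $\frac{L}{2}$ term from smoothness. In particular, no assumption on $\eta$ is needed for the inequality itself — the coefficient $\frac{1}{2\eta}-\frac{L}{2}$ is allowed to be negative — so the proof is a three-line calculation and does not require any case analysis.
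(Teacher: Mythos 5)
Your proof is correct, and it is the standard derivation: quadratic upper bound from $L$-smoothness, polarization of the cross term $\langle\nabla f(\vx_t),\vv_t\rangle$, and conversion of $\frac{\eta}{2}\Norm{\vv_t}^2$ into $\frac{1}{2\eta}\Norm{\vx_{t+1}-\vx_t}^2$. The paper does not prove this lemma itself but imports it by citation from \citet[Lemma 2]{li2021page}, and your argument is essentially the one given there, including the correct observation that no sign condition on $\frac{1}{2\eta}-\frac{L}{2}$ is needed.
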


We establish the decrease of function value as follows.

\begin{lemma}\label{lem:linear} Under Assumption~\ref{asm:lower}-\ref{asm:W}, Algorithm~\ref{alg:DRONE} holds that
\begin{align*}
        \mathbb{E}[f(\bx^{t+1})-f^*]\leq(1-\mu\eta)\mathbb{E}[f(\bx^{t})-f^*]+\eta U^t+L^2\eta C^t-\left(\frac{1}{2\eta}-\frac{L}{2}\right)\mathbb{E}\Vert\bx^{t+1}-\bx^{t}\Vert^2.
\end{align*}
\begin{proof}
Lemma \ref{lem:FM} and the update $\mX^{t+1} = \FM(\mX^t - \eta \mS^t, \mW, K)$ means
\begin{align}\label{update:avg}
\begin{split}
\bx^{t+1} = & \frac{1}{n}\vone^\top\FM(\mX^t-\eta\mS^t,\mW,K) \\
= & \frac{1}{n}\vone^\top(\mX^t-\eta\mS^t) \\
= & \bx^t - \eta \bs^t.
\end{split}
\end{align} 
Lemma \ref{lem:smooth} shows the function $f(\cdot)$ is $L$-smooth, then 
Lemma \ref{lem:decrease} with $\vx^{t}=\bx^{t}$, $\vx^{t+1}=\bx^{t+1}$ and $\vv^{t}=\bs^{t}$ means
\begin{align}\label{ieq:decrease-f1}
\begin{split}
f(\bx^{t+1})
\leq f(\bx^t) - \frac{\eta}{2}\Norm{\nabla f(\bx^t)}^2 - \left(\frac{1}{2\eta}-\frac{L}{2}\right)\Norm{\bx^{t+1}-\bx^t}^2 + \frac{\eta}{2}\Norm{\bs^t-\nabla f(\bx^t)}^2.
\end{split}
\end{align}
We also have
\begin{align*}
\BE\Norm{\frac{1}{n}\sum_{i=1}^n\big(\nabla f_i(\vx^t(i))-\nabla f_i(\bx^t)\big)}^2 
\leq & \frac{1}{n}\sum_{i=1}^n\BE\Norm{\nabla f_i(\vx^t(i))-\nabla f_i(\bx^t)}^2 \\
\leq & \frac{1}{n}\sum_{i=1}^m nL^2\BE\Norm{\vx^t(i)-\bx^t}^2 \\
= & L^2\BE\Norm{\vx^t-\vone\bx^t}^2 
\leq L^2C^t,
\end{align*}
where we use the inequality $\Norm{\frac{1}{n}\sum_{i=1}^n \va_i}^2 \leq \frac{1}{n} \sum_{i=1}^n \Norm{\va_i}^2$ for $\va_1,\dots,\va_n\in\BR^d$ and Lemma~\ref{lem:smooth}.
Consequently, we have
\begin{align}\label{ieq:error-grad}
\begin{split}
  \BE\Norm{\bs^t-\nabla f(\bx^t)}^2 
=& \BE\Norm{\frac{1}{n}\sum_{i=1}^n \big(\vg^t(i)-\nabla f_i(\bx^t)\big)}^2 \\
\leq &  2\BE\Norm{\frac{1}{n}\sum_{i=1}^n\big(\vg^t(i)-\nabla f_i(\vx^t(i))\big)}^2 +  2\BE\Norm{\frac{1}{n}\sum_{i=1}^n\big(\nabla f_i(\vx^t(i))-\nabla f_i(\bx^t)\big)}^2 \\
\leq & 2U^t +  2L^2C^t.
\end{split}
\end{align}
Combining the results of (\ref{ieq:decrease-f1}) and (\ref{ieq:error-grad}), we have
\begin{align*} 
 \BE[f(\bx^{t+1})-f^*]
\leq & \BE\left[f(\bx^t)-f^* - \frac{\eta}{2}\Norm{\nabla f(\bx^t)}^2  + \eta U^t + L^2 \eta C^t - \left(\frac{1}{2\eta}-\frac{L}{2}\right)\Norm{\bx^{t+1}-\bx^t}^2\right] \\
\leq & (1-\mu\eta)\mathbb{E}[f(\bx^{t})-f^*]+\eta U^t+L^2\eta C^t-\left(\frac{1}{2\eta}-\frac{L}{2}\right)\mathbb{E}\Vert\bx^{t+1}-\bx^{t}\Vert^2,
\end{align*}
where the last step is based on the PL condition in Assumption~\ref{asm:SC}.
\end{proof}
\end{lemma}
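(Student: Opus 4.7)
The plan is to combine the standard descent lemma for smooth functions with the PL condition, while carefully controlling how the local gradient estimators and consensus error propagate into the descent inequality. First I would establish that the averaging of the iterate commutes with the accelerated gossip step: since $\FM$ preserves the row-mean (first part of Lemma~\ref{lem:FM}), the update $\mX^{t+1}=\FM(\mX^t-\eta\mS^t,\mW,K)$ immediately yields $\bx^{t+1}=\bx^{t}-\eta\bs^{t}$. This turns the decentralized update into a virtual centralized inexact-gradient step driven by the mean direction $\bs^{t}$.

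Next I would invoke Lemma~\ref{lem:smooth} to assert that $f$ is $L$-smooth, and apply the descent inequality (Lemma~\ref{lem:decrease}) with $\vx_t=\bx^t$, $\vx_{t+1}=\bx^{t+1}$, and $\vv_t=\bs^t$. This produces the bound
\begin{align*}
f(\bx^{t+1})\le f(\bx^t)-\frac{\eta}{2}\|\nabla f(\bx^t)\|^2-\Big(\frac{1}{2\eta}-\frac{L}{2}\Big)\|\bx^{t+1}-\bx^t\|^2+\frac{\eta}{2}\|\bs^t-\nabla f(\bx^t)\|^2.
\end{align*}
The $-\frac{\eta}{2}\|\nabla f(\bx^t)\|^2$ term is then handled by the PL condition (Assumption~\ref{asm:SC}), giving a contraction of the form $-\mu\eta(f(\bx^t)-f^*)$ after taking expectations.

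The main obstacle, and the step that needs the finite-sum structure, is controlling $\|\bs^t-\nabla f(\bx^t)\|^2$. Writing $\bs^t=\frac{1}{n}\sum_i\vg^t(i)$ and inserting $\pm\frac{1}{n}\sum_i\nabla f_i(\vx^t(i))$, I would split by the elementary $\|\va+\vb\|^2\le 2\|\va\|^2+2\|\vb\|^2$ into the gradient-estimator bias
\begin{align*}
\Big\|\tfrac{1}{n}\sum_{i=1}^n\big(\vg^t(i)-\nabla f_i(\vx^t(i))\big)\Big\|^2
\end{align*}
and the consensus piece $\big\|\tfrac{1}{n}\sum_i(\nabla f_i(\vx^t(i))-\nabla f_i(\bx^t))\big\|^2$. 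The first one is exactly $U^t$ in expectation. The crucial observation for the second is that under the mean-squared smoothness (Assumption~\ref{asm:as}) I only know each $f_i$ is $\sqrt{n}L$-smooth individually, so naive bounding loses a factor $n$; instead I would apply Jensen's inequality to bring the squared norm inside the sum, then invoke Assumption~\ref{asm:as} to obtain $L^2\|\mX^t-\vone\bx^t\|^2\le L^2 C^t$. This gives $\BE\|\bs^t-\nabla f(\bx^t)\|^2\le 2U^t+2L^2 C^t$, and absorbing the factor $2$ into the $\eta/2$ yields exactly the stated $\eta U^t+L^2\eta C^t$.

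Putting everything together and taking expectations delivers the claimed Lyapunov-style recursion. I expect the routine calculation to be smooth; the only subtle point is recognizing that mean-squared smoothness must be applied after Jensen (not individually) to avoid an extra $\sqrt{n}$ factor in the consensus term.
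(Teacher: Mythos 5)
Your proposal is correct and follows essentially the same route as the paper: mean-preservation of $\FM$ to reduce to $\bx^{t+1}=\bx^t-\eta\bs^t$, the descent lemma for the $L$-smooth average $f$, the PL condition on the $-\frac{\eta}{2}\Norm{\nabla f(\bx^t)}^2$ term, and the split of $\Norm{\bs^t-\nabla f(\bx^t)}^2$ into $2U^t+2L^2C^t$. One small imprecision: after Jensen you cannot literally ``invoke Assumption~\ref{asm:as}'' on $\frac{1}{n}\sum_i\Norm{\nabla f_i(\vx^t(i))-\nabla f_i(\bx^t)}^2$ since the arguments differ across $i$; the paper instead applies the individual $\sqrt{n}L$-smoothness (Lemma~\ref{lem:smooth}) term by term, and the resulting factor $n$ is exactly cancelled by the $1/n$ from Jensen, yielding the same $L^2\BE\Norm{\mX^t-\vone\bx^t}^2\leq L^2C^t$.
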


Then we provide the recursion for $U_t$, $V_t$ and $C_t$ in the following lemma.

\begin{lemma}\label{lem:CUV}
Under the setting of Theorem \ref{thm:com}, we have
\begin{align*}
& C^{t+1}\leq 20\rho^2n^2C^t+4\rho^2np\eta^2V^t+12\rho^2n^2\mathbb{E}\Vert\bx^{t+1}-\bx^{t}\Vert^2, \\
& U^{t+1}\leq(1-p)U^t+4pL^2C^t+3pL^2\BE\Norm{\bx^{t+1} - \bx^t}^2, 
\end{align*}       
and
\begin{align*}
 V^{t+1}\leq(1-p)V^t+4pnL^2C^t+3pnL^2\BE\Norm{\bx^{t+1} - \bx^t}^2,
\end{align*}       
where $\rho=\sqrt{14}\left(1-\big(1-{1}/{\sqrt{2}}\big)\sqrt{1-\lambda_2}\,\right)^K$.
\end{lemma}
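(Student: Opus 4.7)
I would prove the three bounds separately, in each case conditioning on the randomness $\zeta^t\sim\mathrm{Bernoulli}(p)$ and $\xi^t\sim\mathrm{Multinomial}(b,q\vone)$ of Algorithm~\ref{alg:DRONE} and invoking the $\rho$-contraction from Lemma~\ref{lem:FM}.

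\textbf{Bound for $C^{t+1}$.} Applying Lemma~\ref{lem:FM} to the two $\FM$ calls on lines~7 and~15, the first yields
\begin{align*}
\|\mX^{t+1}-\vone\bx^{t+1}\|^2 \leq \rho^2\|(\mX^t-\vone\bx^t)-\eta(\mS^t-\vone\bs^t)\|^2 \leq 2\rho^2 C^t,
\end{align*}
while the second, combined with Lemma~\ref{lem:avg-norm}, gives
\begin{align*}
\eta^2\|\mS^{t+1}-\vone\bs^{t+1}\|^2 \leq 2\rho^2 C^t + 2\rho^2\eta^2\|\mG^{t+1}-\mG^t\|^2.
\end{align*}
What remains is to estimate $\mathbb{E}\|\mG^{t+1}-\mG^t\|^2$. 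On $\{\zeta^t=1\}$, $\mG^{t+1}=\nabla\mF(\mX^{t+1})$, so $\|\nabla\mF(\mX^{t+1})-\mG^t\|^2 \leq 2\|\nabla\mF(\mX^{t+1})-\nabla\mF(\mX^t)\|^2 + 2nV^t$. On $\{\zeta^t=0\}$, the identity $\mathbb{E}[(\xi^t_i/(bq))^2]=(1-q)/(bq)+1\leq 2np+1$ (using $q=1/n$ and $b\geq(1-p)/p$) delivers $\mathbb{E}[\|\mG^{t+1}-\mG^t\|^2\mid\zeta^t=0] \leq (2np+1)\|\nabla\mF(\mX^{t+1})-\nabla\mF(\mX^t)\|^2$. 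Writing $\vx^{t+1}(i)-\vx^t(i)=(\vx^{t+1}(i)-\bx^{t+1})+(\bx^{t+1}-\bx^t)+(\bx^t-\vx^t(i))$ and applying Lemma~\ref{lem:smooth} together with mean-squared smoothness gives
\begin{align*}
\|\nabla\mF(\mX^{t+1})-\nabla\mF(\mX^t)\|^2 \leq 3nL^2\big[(1+2\rho^2)C^t+\|\bx^{t+1}-\bx^t\|^2\big].
\end{align*}
Collecting terms and absorbing constants via $\eta\leq 1/(20L)$ produces exactly the claimed $20\rho^2 n^2 C^t + 4\rho^2 np\eta^2 V^t + 12\rho^2 n^2\mathbb{E}\|\bx^{t+1}-\bx^t\|^2$.

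\textbf{Bounds for $U^{t+1}$ and $V^{t+1}$.} Set $\ve^{t+1}(i):=\vg^{t+1}(i)-\nabla f_i(\vx^{t+1}(i))$ and, on $\{\zeta^t=0\}$, $\va_i:=\nabla f_i(\vx^{t+1}(i))-\nabla f_i(\vx^t(i))$; then $\ve^{t+1}(i)=0$ on $\{\zeta^t=1\}$, while on $\{\zeta^t=0\}$
\begin{align*}
\ve^{t+1}(i) = \ve^t(i) + \Big(\tfrac{\xi^t_i}{bq}-1\Big)\va_i,
\end{align*}
and the second summand has zero conditional mean. For $V^{t+1}$, an agent-wise bias/variance decomposition with $\Var(\xi^t_i)=bq(1-q)$ leaves a sum of $\|\va_i\|^2$ terms that the smoothness estimate above controls, yielding $4pnL^2 C^t + 3pnL^2\mathbb{E}\|\bx^{t+1}-\bx^t\|^2$. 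For $U^{t+1}$, averaging over $i$ before squaring is essential: the negative cross-covariance $\Cov(\xi^t_i,\xi^t_j)=-bq^2$ gives the key identity
\begin{align*}
\Var\Big(\tfrac{1}{n}\sum_i \tfrac{\xi^t_i}{bq}\va_i\Big) = \tfrac{1}{nb}\sum_i\|\va_i\|^2 - \tfrac{1}{n^2b}\Big\|\sum_i\va_i\Big\|^2 \leq \tfrac{1}{nb}\sum_i\|\va_i\|^2,
\end{align*}
a factor of $n$ tighter than the $V^{t+1}$-style bound, which is precisely what turns $4pnL^2\mapsto 4pL^2$ and $3pnL^2\mapsto 3pL^2$.

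\textbf{Main obstacle.} The core difficulty is the multinomial variance computation for $U^{t+1}$: because the $\xi^t_i$ are negatively correlated, recovering the sharp factor-of-$n$ gap between the $U^{t+1}$ and $V^{t+1}$ recursions requires using the exact covariance structure rather than a union bound. A secondary challenge is the circular coupling in the $C^{t+1}$ estimate, where $\|\mG^{t+1}-\mG^t\|^2$ is controlled via $\|\mX^{t+1}-\mX^t\|^2$, which itself involves the post-$\FM$ consensus error at time $t+1$; one must express this displacement in pre-$\FM$ quantities before applying the contraction to avoid a self-referential bound.
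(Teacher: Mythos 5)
Your proposal is correct and follows essentially the same route as the paper's proof: the same $\rho^2$-contraction plus Young's inequality for the two consensus terms in $C^{t+1}$, the same three-term decomposition $\vx^{t+1}(i)-\vx^t(i)=(\vx^{t+1}(i)-\bx^{t+1})+(\bx^{t+1}-\bx^t)+(\bx^t-\vx^t(i))$ combined with the $\sqrt{n}L$-smoothness of each $f_i$, the same martingale/zero-conditional-mean splitting for $U^{t+1}$ and $V^{t+1}$, and the same exact multinomial variance computation (the paper phrases it as $\frac{1-p}{b}\mathbb{E}\|\va_\xi-\frac{1}{n}\sum_i\va_i\|^2$ for a single uniform index $\xi$, which is identical to your covariance identity) to get the crucial extra $1/n$ in the $U^{t+1}$ recursion. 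Your intermediate constants are slightly looser in places (e.g., the bound on $\mathbb{E}[(\xi_i^t/(bq))^2]$), but the slack from $\rho^2\le 1/(80n^2)$ and $\eta\le 1/(20L)$ still yields the stated coefficients, so no gap remains.
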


\begin{proof}
The setting Lemma \ref{lem:FM} implies
\begin{align}\label{ieq:rho}
\rho^2 \leq \frac{1}{80n^2}\leq \frac{1}{80}.    
\end{align}
We first consider 
$C^{t+1}=\BE\Vert\mX^{t+1}-\mathbf{1}\bx^{t+1}\Vert^2+\eta^2\mathbb{E}\Vert\mS^{t+1}-\mathbf{1}\bs^{t+1}\Vert^2$. 
For the term $\BE\Vert\mX^{t+1}-\mathbf{1}\bx^{t+1}\Vert^2$, we have
\begin{align}\label{ieq:CX}
\begin{split}    
    \mathbb{E}\Vert\mX^{t+1}-\mathbf{1}\bx^{t+1}\Vert^2&\leq\rho^2\mathbb{E}\Vert(\mX^t-\eta \mS^t)-\mathbf{1}(\bx^t-\eta\bs^t)\Vert^2\\
&\leq2\rho^2\big(\mathbb{E}\Vert\mX^t-\mathbf{1}\bx^t\Vert^2+\eta^2\mathbb{E}\Vert\mS^t-\mathbf{1}\bs^t\Vert^2\big)\\
&=2\rho^2C^t,
\end{split}
\end{align}
where we use the definition of $\rho$ and Lemma \ref{lem:FM} in the first inequality and Young’s inequality in the second inequality.

For the term $\eta^2\Vert\mS^t-\mathbf{1}\bs^t\Vert^2$, we have
\begin{align}\label{ieq:CS}
\begin{split}
    \eta^2\mathbb{E}\Vert\mS^{t+1}-\mathbf{1}\bs^{t+1}\Vert^2 
\leq & \rho^2\eta^2\mathbb{E}\Vert\mS^t+\mG^{t+1}-\mG^t-\mathbf{1}(\bs^t+\bg^{t+1}-\bg^{t})\Vert^2\\
\leq &2\rho^2\eta^2\mathbb{E}\Vert\mS^{t}-\mathbf{1}\bs^{t}\Vert^2+2\rho^2\eta^2\mathbb{E}\Vert\mG^{t+1}-\mG^t-\mathbf{1}(\bg^{t+1}-\bg^{t})\Vert^2\\
\leq & 2\rho^2\eta^2\mathbb{E}\Vert\mS^{t}-\mathbf{1}\bs^{t}\Vert^2+2\rho^2\eta^2\mathbb{E}\Vert\mG^{t+1}-\mG^t\Vert^2\\
\leq & 2\rho^2C^t+2\rho^2\eta^2\mathbb{E}\Vert\mG^{t+1}-\mG^t\Vert^2,\\
\end{split}
\end{align}
where we use the definition of $\rho$ and Lemma \ref{lem:FM} in the first inequality, Young’s inequality in the second inequality, Lemma \ref{lem:avg-norm} in the third inequality and definition of $C^t$ in the last inequality.

We bound the term $\Vert\mG^{t+1}-\mG^{t}\Vert^2$ in inequality (\ref{ieq:CS}) as
\begin{align}\label{ieq:CG}
\small\begin{split}
  & \mathbb{E}\Vert\mG^{t+1}-\mG^{t}\Vert^2 \\
=& p\mathbb{E}\Vert\nabla \mF(\mX^{t+1})-\mG^{t}\Vert^2+(1-p)\sum_{i=1}^n\mathbb{E}\Big\Vert\dfrac{\xi_i^t}{bq}\big(\nabla f_i(\vx^{t+1}(i)) - \nabla f_i(\vx^t(i))\big)\Big\Vert^2\\
\leq & 2p\mathbb{E}\Vert\nabla\mF(\mX^{t+1})-\nabla\mF(\mX^{t})\Vert^2+2p\mathbb{E}\Vert\nabla\mF(\mX^{t})-\mG^t\Vert^2+(1-p)\sum_{i=1}^n\mathbb{E}\Big\Vert\dfrac{\xi_i^t}{bq}\big(\nabla f_i(\vx^{t+1}(i)) - \nabla f_i(\vx^t(i))\big)\Big\Vert^2\\
\leq & 2p\mathbb{E}\Vert\nabla\mF(\mX^{t+1})-\nabla\mF(\mX^{t})\Vert^2+2p\mathbb{E}\Vert\nabla\mF(\mX^{t})-\mG^t\Vert^2+\frac{(1-p)(b+n)}{b}\sum_{i=1}^n\mathbb{E}\Vert\nabla f_i(\vx^{t+1}(i)) - \nabla f_i(\vx^t(i))\Vert^2\\
=& 2pnV^t+\Big(2p+\frac{(1-p)(b+n)}{b}\Big)\mathbb{E}\Vert\nabla\mF(\mX^{t+1})-\nabla\mF(\mX^{t})\Vert^2\\
\leq & 2pnV^t+\big(2p+p(b+n)\big)\mathbb{E}\Vert\nabla\mF(\mX^{t+1})-\nabla\mF(\mX^{t})\Vert^2\\
\leq & 2pnV^t+4pn\mathbb{E}\Vert\nabla\mF(\mX^{t+1})-\nabla\mF(\mX^{t})\Vert^2
\end{split}
\end{align}
where the first inequality is based on Cauchy--Schwarz inequality, the second inequality is based on
\begin{align*}
    \mathbb{E}\Big\Vert\dfrac{\xi_i^t}{bq}\big(\nabla f_i(\vx^{t+1}(i)) - \nabla f_i(\vx^t(i))\big)\Big\Vert^2 \leq \frac{\mathbb{E}[(\xi_i^t)^2]}{b^2q^2}\mathbb{E}\Vert\nabla f_i(\vx^{t+1}(i)) - \nabla f_i(\vx^t(i))\Vert^2
\end{align*}
and the fact $\xi_i^t\sim {\rm Binomial}(b,q)$ that leads to
\begin{align*}
   \frac{\mathbb{E}[(\xi_i^t)^2]}{b^2q^2}
=\frac{b^2q^2+bq(1-q)}{b^2q^2}
=\frac{b+n-1}{b}
\leq\frac{b+n}{b},
\end{align*}
the third inequality is based on the fact $b\geq(1-p)/p$ and the last step is based on the fact that $1\leq b\leq n$.

We bound the term $\mathbb{E}\Vert\nabla\mF(\mX^{t+1})-\nabla\mF(\mX^{t})\Vert^2$ in inequality (\ref{ieq:CG}) as
\begin{align}\label{ieq:CF}
\begin{split}
& \mathbb{E}\Vert\nabla\mF(\mX^{t+1})-\nabla\mF(\mX^{t})\Vert^2 \\
=& \sum_{i=1}^n\BE\Norm{\nabla f_{i}(\vx^{t+1}(i)) - \nabla f_{i}(\vx^t(i))}^2\\
\leq & 3\sum_{i=1}^n\BE\big(\Norm{\nabla f_{i}(\vx^{t+1}(i)) - \nabla f_{i}(\bx^{t+1})}^2 +\Norm{\nabla f_{i}(\bx^{t+1}) - \nabla f_{i}(\bx^t)}^2+\Norm{\nabla f_{i}(\bx^t) - \nabla f_{i}(\vx^t(i))}^2\big)\\
\leq & 3(\sqrt{n}L)^2\sum_{i=1}^n \BE\Norm{\vx^{t+1}(i) - \bx^{t+1}}^2 + 3nL^2\BE\Norm{\bx^{t+1} - \bx^t}^2 + 3(\sqrt{n}L)^2\sum_{i=1}^n \BE\Norm{\vx^{t}(i) - \bx^{t}}^2\\
= & 3nL^2\big(\BE\Norm{\mX^{t+1} - \vone\bx^{t+1}}^2 + \BE\Norm{\mX^t - \vone\bx^t}^2+\BE\Norm{\bx^{t+1} - \bx^t}^2\big) \\
\leq & 3nL^2\big(2\rho^2C^t + C^t+\BE\Norm{\bx^{t+1} - \bx^t}^2\big) \\
\leq & 4nL^2C^t+3nL^2\BE\Norm{\bx^{t+1} - \bx^t}^2,\\
\end{split}
\end{align}
where the the second inequality is based on Lemma \ref{lem:smooth}, the third inequality is based on inequality (\ref{ieq:CX}) and last step is based on the the fact $\rho^2\leq1/6$.

Combining inequalities (\ref{ieq:CX}) -- (\ref{ieq:CF}), we achieve
\begin{align*}
C^{t+1} \leq 20\rho^2n^2C^t+4\rho^2np\eta^2V^t+12\rho^2n^2\mathbb{E}\Vert\bx^{t+1}-\bx^{t}\Vert^2.
\end{align*}

We then consider $U^{t+1}=\mathbb{E}\big\Vert\frac{1}{n}\sum_{i=1}^{n}\big(\vg^{t+1}(i)-\nabla f_i(\vx^{t+1}(i))\big)\big\Vert^2$. 
We let $\xi$ be the random variable satisfying $P(\xi=i)=q$.
The update rule for $\vg_{t+1}(i)$ implies
\begin{align*}
U^{t+1}&=(1-p)\mathbb{E}\Big\Vert\frac{1}{n}\sum_{i=1}^{n}\Big(\vg^t(i)+\dfrac{\xi_i^t}{bq}\big(\nabla f_i(\vx^{t+1}(i)) - \nabla f_i(\vx^t(i))\big)-\nabla f_i(\vx^{t+1}(i))\Big)\Big\Vert^2\\
&=(1-p)U^t+(1-p)\mathbb{E}\Big\Vert\frac{1}{n}\sum_{i=1}^{n}\Big(\dfrac{\xi_i^t}{bq}\big(\nabla f_i(\vx^{t+1}(i)) - \nabla f_i(\vx^t(i))\big)-\big(\nabla f_i(\vx^{t+1}(i)))-\nabla f_i(\vx^{t}(i))\big)\Big)\Big\Vert^2\\
&=(1-p)U^t+(1-p)\mathbb{E}\Big\Vert\sum_{i=1}^{n}\Big(\dfrac{\xi_i^t}{b}\big(\nabla f_i(\vx^{t+1}(i)) - \nabla f_i(\vx^t(i))\big)-\frac{1}{n}\big(\nabla f_i(\vx^{t+1}(i)))-\nabla f_i(\vx^{t}(i))\big)\Big)\Big\Vert^2\\
&=(1-p)U^t+\frac{1-p}{b}\mathbb{E}\Big\Vert\big(\nabla f_{\xi}(\vx^{t+1}(\xi)) - \nabla f_{\xi}(\vx^t(\xi))\big)-\frac{1}{n}\sum_{i=1}^{n}\big(\nabla f_i(\vx^{t+1}(i)) - \nabla f_i(\vx^t(i))\big)\Big\Vert^2\\
&\leq(1-p)U^t+\frac{1-p}{b}\mathbb{E}\Big\Vert\big(\nabla f_{\xi}(\vx^{t+1}(\xi)) - \nabla f_{\xi}(\vx^t(\xi))\big)\Big\Vert^2\\
&=(1-p)U^t+\frac{1-p}{nb}\sum_{i=1}^n\mathbb{E}\Norm{\nabla f_{i}(\vx^{t+1}(i)) - \nabla f_{i}(\vx^t(i))}^2 \\
&\leq(1-p)U^t+\frac{p}{n}\sum_{i=1}^n\mathbb{E}\Norm{\nabla f_{i}(\vx^{t+1}(i)) - \nabla f_{i}(\vx^t(i))}^2 \\
&\leq(1-p)U^t+4pL^2C^t+3pL^2\BE\Norm{\bx^{t+1} - \bx^t}^2, 
\end{align*}
where the second equality is based on the property of Martingale~\citep[Proposition~1]{fang2018spider}, the third equality is based on the choice of $q$, the fourth equality is based on the fact $[\xi^t_1,\cdots,\xi^t_n]^\top \sim {\rm Multinomial}(b,\frac{1}{n}\vone)$ and 
\begin{align}\label{eq:E}
    \BE\left[\nabla f_{\xi}(\vx^{t+1}(\xi)) - \nabla f_{\xi}(\vx^t(\xi))\right]=\frac{1}{n}\sum_{i=1}^{n}\big(\nabla f_i(\vx^{t+1}(i)) - \nabla f_i(\vx^t(i))\big),
\end{align}
the first inequality is also based on (\ref{eq:E}), the second inequality is based on the setting of $b\geq (1-p)/p$ and the last step is because of the following inequality.
\begin{align}\label{ieq:diff-grad}
\begin{split}    
    & \frac{1}{n}\sum_{i=1}^n\mathbb{E}\Norm{\nabla f_{i}(\vx^{t+1}(i)) - \nabla f_{i}(\vx^t(i))}^2 \\
\leq &  \frac{3}{n}\sum_{i=1}^n\left(\mathbb{E}\Norm{\nabla f_{i}(\vx^{t+1}(i)) - \nabla f_{i}(\bar\vx^{t+1})}^2 + \mathbb{E}\Norm{\nabla f_{i}(\bar\vx^{t+1}) - \nabla f_{i}(\bar\vx^{t})}^2 + \mathbb{E}\Norm{\nabla f_{i}(\bar\vx^{t}) - \nabla f_{i}(\vx^t(i))}^2 \right)\\  
\leq  &  \frac{3}{n}\sum_{i=1}^n\left(nL^2\mathbb{E}\Norm{\vx^{t+1}(i) - \bar\vx^{t+1}}^2 + \mathbb{E}\Norm{\nabla f_{i}(\bar\vx^{t+1}) - \nabla f_{i}(\bar\vx^{t})}^2 + nL^2\mathbb{E}\Norm{\bar\vx^{t} - \vx^t(i)}^2 \right)\\ 
\leq  &  3L^2\BE\Norm{\mX^{t+1}-\vone\bx^{t+1}}^2 + 3L^2\mathbb{E}\Norm{\bar\vx^{t+1} - \bar\vx^{t}}^2 + 3L^2\BE\Norm{\mX^t-\vone\bx^t}^2 \\
\leq  &  4L^2C^t + 3L^2\mathbb{E}\Norm{\bar\vx^{t+1} - \bar\vx^{t}}^2,
\end{split}
\end{align}
where the second inequality is based on Lemma \ref{lem:smooth}, the third inequality is based on Assumption~\ref{asm:as} and the last step is based on inequalities (\ref{ieq:rho}) and (\ref{ieq:CX}).
   
We finally consider $V^{t+1}=\frac{1}{n}\mathbb{E}\Vert \mG^{t+1} -\nabla\mF(\mX^{t+1})\Vert^2$. The update rule for $\vg_{t+1}(i)$ implies
\begin{align*}
\begin{split}
    V^{t+1}&=\frac{1-p}{n}\sum_{i=1}^n\mathbb{E}\Big\Vert \vg^{t}(i)+\dfrac{\xi_i^t}{bq}\big(\nabla f_i(\vx^{t+1}(i)) - \nabla f_i(\vx^t(i))\big)-\nabla f_i(\vx^{t+1}(i))\Big\Vert^2\\
    &=(1-p)V^t+\frac{1-p}{n}\sum_{i=1}^n\mathbb{E}\Big\Vert \dfrac{\xi_i^t}{bq}\big(\nabla f_i(\vx^{t+1}(i)) - \nabla f_i(\vx^t(i))\big)-\big(\nabla f_i(\vx^{t+1}(i)) - \nabla f_i(\vx^t(i))\big)\Big\Vert^2\\
    &\leq(1-p)V^t+\frac{1-p}{n}\sum_{i=1}^n\dfrac{\Var[\xi_i^t]}{b^2q^2}\BE\Vert \nabla f_i(\vx^{t+1}(i)) - \nabla f_i(\vx^t(i))\Vert^2\\
    &=(1-p)V^t+\frac{1-p}{n}\sum_{i=1}^n\dfrac{bq(1-q)}{b^2q^2}\mathbb{E}\Vert \nabla f_i(\vx^{t+1}(i)) - \nabla f_i(\vx^t(i))\Vert^2\\
    &\leq(1-p)V^t+\frac{1-p}{b}\sum_{i=1}^n\mathbb{E}\Vert\nabla f_i(\vx^{t+1}(i)) - \nabla f_i(\vx^t(i))\Vert^2\\
    &\leq(1-p)V^t+p\sum_{i=1}^n\mathbb{E}\Vert \nabla f_i(\vx^{t+1}(i)) - \nabla f_i(\vx^t(i))\Vert^2\\
    &\leq(1-p)V^t+4pnL^2C^t+3pnL^2\BE\Norm{\bx^{t+1} - \bx^t}^2,
\end{split}
\end{align*}    
where the second equality is based on the property of Martingale~\citep[Proposition~1]{fang2018spider}, the first inequality is based on the fact $\xi_i^t\sim {\rm Binomial}(b,q)$ and the independence between $\xi_i^t$ and $\nabla f_i(\vx^{t+1}(i)) - \nabla f_i(\vx^t(i))$, the second and third and inequalities are based on the settings $q=1/n$ and~$b\geq (1-p)/p$, and the last step is based on the inequality~(\ref{ieq:diff-grad}).
\end{proof}

\subsection{The Proof of Theorem \ref{thm:com}}

\begin{proof}

Recall that Lemma \ref{lem:linear} says
\begin{align*}
        \mathbb{E}[f(\bx^{t+1})-f^*]\leq(1-\mu\eta)\mathbb{E}[f(\bx^{t})-f^*]+\eta U^t+L^2\eta C^t-\left(\frac{1}{2\eta}-\frac{L}{2}\right)\mathbb{E}\Vert\bx^{t+1}-\bx^{t}\Vert^2.
\end{align*}
Combining above inequality with Lemma \ref{lem:CUV}, we have
\begin{align*}
\begin{split}
\Phi^{t+1} 
=& \mathbb{E}[f(\bx^{t+1})-f^*]+\alpha U^{t+1}+\beta V^{t+1}+L C^{t+1}\\
\leq & (1-\mu\eta)\mathbb{E}[f(\bx^{t})-f^*]+\eta U^t+L^2\eta C^t-(\frac{1}{2\eta}-\frac{L}{2})\mathbb{E}\Vert\bx^{t+1}-\bx^{t}\Vert^2\\
 & +\alpha(1-p)U^t+4\alpha pL^2C^t+3\alpha pL^2\BE\Norm{\bx^{t+1} - \bx^t}^2\\
 & +\beta (1-p)V^t+4\beta pnL^2C^t+3\beta pnL^2\BE\Norm{\bx^{t+1} - \bx^t}^2\\
 & +20L \rho^2n^2C^t+4L\rho^2np\eta^2V^t+12L\rho^2n^2\mathbb{E}\Vert\bx^{t+1}-\bx^{t}\Vert^2\\
\leq & (1-\mu\eta)\mathbb{E}[f(\bx^{t})-f^*]+\big(\eta+\alpha(1-p)\big)U^t+\big(\beta(1-p)+4L\rho^2np\eta^2\big)V^t\\
 & +(L^2\eta+4\alpha pL^2+4\beta pnL^2+20L \rho^2n^2)C^t\\
 & -\left(\frac{1}{2\eta}-\frac{L}{2}-3\alpha pL^2-3\beta pnL^2-12L\rho^2n^2\right)\mathbb{E}\Vert\bx^{t+1}-\bx^{t}\Vert^2.\\
\end{split}
\end{align*}
The setting of $p,\eta,b,\rho,\alpha$ and $\beta$ implies
\begin{align*}
     \eta+\alpha(1-p) & \leq \alpha(1-\mu\eta), \\
     \beta(1-p)+4L\rho^2np\eta^2 & \leq \beta(1-\mu\eta), \\
     L^2\eta+4\alpha pL^2+4\beta pnL^2+20L \rho^2n^2 & \leq L(1-\mu\eta),
\end{align*}
and
\begin{align*}
    \frac{1}{2\eta}-\frac{L}{2}-3\alpha pL^2-3\beta pnL^2-12L\rho^2n^2 \geq 0.
\end{align*}

Therefore, we have
\begin{align*}
    \Phi^{t+1}\leq(1-\mu\eta)\Phi^t,
\end{align*}
and the setting
\begin{equation*}
    T\geq \left\lceil\frac{1}{\mu\eta}\log\frac{\Phi^0}{\epsilon}\right\rceil
\end{equation*}
leads to
\begin{align}\label{ieq:PhiT}
     \Phi^T \leq (1-\mu\eta)^T\Phi^0 \leq \epsilon.
\end{align}

The output $\vx^{\rm out}$ holds that
\begin{align*}
\BE[f(\vx^{\rm out})-f^*]
&=\BE[f(\bx^T)-f^*]+\BE[f(\vx^{\rm out})-f(\bx^T)]\\
&=\BE[f(\bx^T)-f^*]+\frac{1}{n}\sum_{i=1}^n\BE[f(\vx^T(i))-f(\bx^T)]\\
&\leq\BE[f(\bx^T)-f^*]+\frac{1}{n}\sum_{i=1}^n\left(\BE\left[\langle\nabla f(\bx^T),\vx^T(i)-\bx^T\rangle + \frac{L}{2}\BE\Norm{\vx^T(i)-\bx^T}^2\right]\right)\\
&=\BE[f(\bx^T)-f^*]+\frac{L}{2n}\sum_{i=1}^n\BE\Norm{\vx^T(i)-\bx^T}^2\\
&\leq\BE[f(\bx^T)-f^*]+\frac{L}{2n}C^T\\
&\leq \Phi^T \leq \epsilon,
\end{align*}
where the first inequality is based on Lemma \ref{lem:smooth}, the second inequality is based on the definition of $C^t$,  the third inequality is based on the definition of $\Phi^t$ and the last step is based on inequality (\ref{ieq:PhiT}).
\end{proof}

\subsection{The Proof of Corollary \ref{cor:decentralized-upper}}

\begin{proof}
The parameters setting in this corollary means
\begin{align*}
\begin{split}    
& p=\Theta\left(\max\left\{\frac{1}{\sqrt{n}},\frac{1}{\kappa}\right\}\right), \qquad 
b=\Theta\left(\min\{\sqrt{n},\kappa\}\right
), \qquad \eta=\Theta\left(\frac{1}{L}\right),    \\
& K=\Theta\left(\frac{\sqrt{2}\,(4+\log n)}{(\sqrt{2}-1)\sqrt{\gamma}}\right) \qquad \text{and}\qquad     T=\Theta\left(\kappa\log\left(\frac{1}{\epsilon}\right)\right).
\end{split}
\end{align*}

At each iteration, the algorithm takes 
$pn+(1-p)b=\fO\big(n/\min\{\kappa,\sqrt{n}\}\big)$ LFO calls (in expectation),
$K$ communication rounds and $1+K\tau$ time cost.
Multiplying the overall iteration numbers $T$ on these quantities completes the proof.
\end{proof}

\begin{remark}
We omit the detailed proof of Theorem \ref{thm:DGD-GT}, since it can be easily achieved by following Theorem \ref{thm:com} and Corollary~\ref{cor:decentralized-upper} with $p=1$, $U^t=V^t=0$ and showing the linear convergence of $\BE[f(\bx^t)-f^*] + LC^t$.
\end{remark}

\section{More Details for Experiments}\label{appendix:experiments}

All of our experiments are performed on PC with Intel(R) Core(TM) i7-8550U CPU@1.80GHz processor and we implement the algorithms by MPI for Python 3.9. 

We formally present the details of centralized gradient descent (CGD) in Algorithm \ref{alg:CGD}. The network in CGD has sever-client architecture, which allows the server to communicate with all clients. 
The convergence of CGD can be described by gradient descent step
\begin{align*}
    \tilde\vx^{t+1} = \tilde\vx^{t} - \eta\nabla f(\tilde\vx^{t}),
\end{align*}
which can find an $\epsilon$-suboptimal solution within $\fO(\kappa\log(1/\epsilon))$ iterations~\cite{karimi2016linear}.
Therefore, it requires the LFO complexity of $\fO(\kappa n\log(1/\epsilon))$.
Since the sever-client architecture does not suffer from consensus error, it has communication complexity of $\fO\left(\kappa\log(1/\epsilon)\right)$ and time complexity of 
$\fO\left(\kappa\left(1+{\tau}\right)\log(1/\epsilon)\right)$.

The objective functions for linear regression and logistic regression are not strongly convex when $d>m$, while they satisfy the PL conditions. Please refer to Section 3.2 of \citet{karimi2016linear}.

\begin{algorithm}[t]
\caption{Centralized GD}\label{alg:CGD}
\begin{algorithmic}[1]\vskip0.05cm
\STATE \textbf{Input:} initial point $\tilde\vx^0\in\BR^d$, iteration number $T$ and stepsize $\eta>0$  \\[0.12cm]
\STATE \textbf{for $t=0,1\cdots T-1$ do} \\[0.12cm]
\STATE \quad \textbf{for} $i=1,\dots,n$ \textbf{do in parallel}  \\[0.12cm]
\STATE \quad\quad  \client \\[0.12cm]
\STATE \quad \quad \quad receive $\tilde\vx^t$ \\[0.12cm] 
\STATE \quad \quad \quad compute $\vg^t(i)=\nabla f_i(\vx^t)$ \\[0.12cm] 
\STATE \quad \quad \quad send $\vg^t(i)$ \\[0.12cm] 
\STATE \quad \textbf{end for}\\[0.05cm]
\STATE \quad \server \\[0.12cm]
\STATE \quad\quad receive $\vg^t(1),\dots,\vg^t(n)$ \\[0.12cm]
\STATE \quad \quad compute $\tilde\vx^{t+1}=\tilde\vx^t - \eta\cdot\frac{1}{n}\sum_{i=1}^n\vg^t(i)$ \\[0.12cm] 
\STATE \quad \quad broadcast $\tilde\vx^{t+1}$ \\[0.12cm] 
\STATE \textbf{end for}\\[0.04cm]
\STATE \textbf{Output:} $\tilde\vx^T$  \\[0.05cm]
\end{algorithmic}
\end{algorithm}

\bibliographystyle{plainnat}
\bibliography{reference}
\end{document}